\documentclass[12pt,reqno]{amsart}

\usepackage{amsfonts}
\usepackage{amsmath,amssymb, amsthm, mathrsfs}
\usepackage{wasysym}
\usepackage{eufrak}
\usepackage{enumerate}
\usepackage{graphicx}
\usepackage{lineno}
\usepackage{amstext}
\usepackage{amssymb}
\usepackage{amsthm}
\usepackage{amsmath}
\usepackage[all]{xy}
\usepackage{graphicx}
\usepackage[colorlinks=true,allcolors=red,linktocpage=true]{hyperref}
\usepackage[utf8]{inputenc}
\usepackage{color}
\usepackage{comment}

\setlength{\textwidth}{6.7in}
\setlength{\oddsidemargin}{-.2in}
\setlength{\evensidemargin}{-0.2in}
\setlength{\textheight}{9in}
\setlength{\topmargin}{0in}
\theoremstyle{definition}
\newtheorem{Definition}{Definition}[section]
\newtheorem{example}{Example}[section]
\newtheorem{remark}{Remark}[section]
\newtheorem{Assumption}{Assumption}[section]

\theoremstyle{plain}
\newtheorem{theorem}{Theorem}[section]
\newtheorem{lemma}{Lemma}[section]
\newtheorem{Corollary}{Corollary}[section]
\newtheorem{proposition}{Proposition}[section]

\begin{document}
	
\title{Flat affine symplectic Lie groups}

\author{Fabricio Valencia}

\subjclass[2010]{Primary: 53D05, 53A15 ; Secondary: 22E60, 22F30}
\date{\today}

\begin{abstract}
We give a new characterization of flat affine manifolds in terms of an action of the Lie algebra of classical infinitesimal affine transformations on the bundle of linear frames. We characterize flat affine symplectic Lie groups using symplectic \'etale affine representations and as a consequence of this, we show that a flat affine symplectic Lie group with bi-invariant symplectic connection contains a nontrivial one parameter subgroup formed by central translations. We give two methods for constructing flat affine symplectic Lie groups, thus obtaining all those having bi-invariant symplectic connections. We get nontrivial examples of simply connected flat affine symplectic Lie groups in every even dimension.
\end{abstract} 
\footnotetext{Partially Supported by CODI, Universidad de Antioquia.  Project Number 2015-7654.}
\maketitle

Keywords: Flat affine symplectic structure,  flat affine symplectic Lie group, bi-invariant symplectic connection, geodesic completeness.

\vskip5pt
\noindent
Instituto de Matem\'aticas, Universidad de Antioquia, Medell\'in-Colombia

e-mail: fabricioyarro@gmail.com

\section{Introduction}
A connected and paracompact $2n$-dimensional symplectic manifold $(M,\omega)$ is called a \emph{flat affine symplectic manifold} if the bundle of symplectic frames $\textsf{Sp}(n,\mathbb{R})\hookrightarrow L(M)^\omega\stackrel{\pi}{\hbox to 18pt{\rightarrowfill}} M$ induced by $\omega$ admits a linear connection $\Gamma_\omega$ whose curvature and torsion 2-forms are identically null. To have a linear connection $\Gamma_\omega$ on $L(M)^\omega$ with the letter characteristics is equivalent to having a flat affine symplectic covariant derivative $\nabla$ on $(M,\omega)$, i.e.  both the curvature and torsion tensors of $\nabla$ vanish and $\nabla \omega=0$.  Moreover, the existence of a symplectic form $\omega$ and a flat affine symplectic covariant derivative $\nabla$ on $M$ is equivalent to have a maximal atlas for $M$ whose change of coordinates are restrictions of affine symplectic transformation of $\mathbb{R}^{2n}$ (see \cite{Fr}).

On the one hand, the study of symplectic manifolds equipped with symplectic connections can be motivated by their role in deformation quantization and geometric quantization (see \cite{FLD,Fe} and \cite{Hs,L}, respectively). A famous result of Fedosov gives a canonical deformation quantization which is defined by the data $(M,\omega,\nabla)$ where $\omega$ is a symplectic form and $\nabla$ is a torsion free symplectic connection on $M$. Such result motivates the definition and study of \emph{Fedosov manifolds} (see \cite{GRS}). On the other hand, some knowledge of the category of flat affine manifolds are necessary for understanding the category of Lagrangian submanifolds (see \cite[Thm 7.8]{W} and \cite{V,V2}). In fact, flat affine manifolds with holonomy reduced to $\textsf{GL}(m,\mathbb{Z})$ appear naturally in integrable
systems and Mirror Symmetry (see \cite{KS}). 
Further applications of  flat affine manifolds appear in the study of Hessian structures and information geometry (see \cite[c.\thinspace 6]{Sh}).

The main purpose of this paper is to study symplectic manifolds endowed with flat affine symplectic structures. More precisely, we want to study the structure of connected symplectic Lie groups which admit left invariant flat affine symplectic connections; some results on this topic can be found in \cite{An}, \cite{MR2}, \cite{Ba}, and \cite{NB}. The study of such a structures is motivated by the open problem proposed by J. Milnor in \cite{Mi} which asks to determine flat affine Lie groups (i.e., Lie groups which carry a left invariant
flat affine structure) and study their properties. The Lie algebra of a flat affine Lie group can be equipped with a structure of {\it left symmetric algebra}. This is a powerful object which we will be using throughout the paper. A very interesting survey paper where is discussed the origin, theory, and applications of left symmetric algebras (and hence flat affine Lie groups) in geometry and physics is \cite{Bu}.

The paper is divided as follows. In Section 2 we give a new characterization of flat affine manifolds (see Theorem \ref{F1}) which can be naturally used to characterize flat affine symplectic connections on the bundle of symplectic frames (see Proposition \ref{CSC}). This characterization is given in terms of an action of the Lie algebra of classical infinitesimal affine transformations on the bundle of linear frames of such a manifold. We show that such an infinitesimal action can be integrated (in the Lie-Palais' Theorem sense) in the case of compact parallelizable flat Riemannian manifolds (see Proposition \ref{F2}).

In Section 3 we introduce \emph{flat affine symplectic Lie groups}, which will be connected symplectic Lie groups endowed with left invariant flat affine symplectic connections. We introduce two results which appeared in a similar way for the pseudo-Riemannian case (see \cite{AuM}) but they have not been stated for the case of left invariant flat affine symplectic connections. These will be very useful for exploring the consequences of the constructions given in next sections. With the first one, we give a characterization of flat affine symplectic Lie groups in terms of ``symplectic'' \'etale affine representations (see Theorem \ref{CharacterizationLeft}). As consequence, the second one result allows us to conclude that every simply connected flat affine symplectic Lie group equipped with a bi-invariant symplectic connection can be identified with a Lie subgroup of affine symplectic transformations of its Lie algebra containing a nontrivial one parameter subgroup formed by central translations (see Proposition \ref{F16}). It is well known that a left invariant flat affine connection leaving parallel a left invariant volume form is geodesically complete if and only if the Lie group is unimodular (see \cite{Ba}). At the end of this section we give a different proof of this result for the case of left invariant flat affine symplectic connections. Our results put into evidence some properties of \emph{flat affine symplectic Lie algebras} (see Definition \ref{FASLA}). Further, other properties appear in \cite{MR2}, \cite{An}, and \cite{NB}.

In section 4, we give a method for constructing simply connected flat affine symplectic Lie groups using Nijenhuis' cohomology for left symmetric algebras (see \cite{N}). The construction will be called \emph{a double extension of a flat affine symplectic Lie algebra}. This is an iterative method which allows us to get flat affine symplectic Lie algebras of dimension $2n+2$ from a flat affine symplectic Lie algebra of dimension $2n$ (see Proposition \ref{F14} and Theorem \ref{F15}). We show that every simply connected flat affine symplectic Lie group whose Lie algebra is obtained as double extension of a flat affine symplectic Lie algebra with $\lambda=\mu$ (see Definition \ref{doubleextensionFAS}) can be identified with a subgroup of symplectic affine transformations of its Lie algebra containing a nontrivial one parameter subgroup formed by central translations (see Corollary \ref{F17}). We also prove that the Lie algebra of a symplectic Lie group equipped with a bi-invariant flat affine symplectic connection is obtained as a double extension of flat affine symplectic Lie algebras starting from $\lbrace 0 \rbrace$ (see Proposition \ref{F18}). Moreover, we observe that the Lie algebra of any flat affine symplectic Lie group of dimension $2$ can be obtained as a double extension starting from $\lbrace 0 \rbrace$. Flat affine symplectic Lie groups in dimension $2$ were found by A. Andrada in \cite{An} (see also \cite{MSG}). We also explain how to use this method to construct nontrivial simply connected flat affine symplectic Lie groups in every even dimension.

In section 5, we get flat affine symplectic Lie groups of dimension $2n$ from a connected flat affine Lie group of dimension $n$. We consider the classical cotangent symplectic Lie group of a connected flat affine Lie group defined in \cite{MR}, and show that there always exists a left invariant flat affine symplectic connection which parallelizes its two natural left invariant transverse Lagrangian foliations (see Proposition \ref{F19}). The corresponding Hess connection (compare \cite{Hs}) will be given explicitly. Again, using Nijenhuis' cohomology for left symmetric algebras, we construct the \emph{twisted cotangent symplectic Lie group} of a connected flat affine Lie group together with left invariant flat affine symplectic connections on it (see Proposition \ref{F20}). The reciprocal of the result obtained in Proposition \ref{F20} was proved by X. Ni and C. Bai in \cite{NB}. We give a shorter and different approach to get it which allows us to show more consequences of our construction. It is important to notice that this construction will be parametrized by a commutative product over the corresponding left symmetric algebra and a 2-cocycle of this left symmetric algebra with values in its dual vector space. We observe that when both the commutative product and the 2-cocycle are null we obtain the classical cotangent symplectic Lie group defined in \cite{MR}. Moreover, in this case the flat affine symplectic connection is the Hess connection (see Corollary \ref{F21}). We prove that on a simply connected Lie group there exists the structure of flat affine symplectic Lie group which admits a normal Abelian Lagrangian Lie subgroup if and only if, the group is isomorphic to the twisted cotangent symplectic Lie group of a connected flat affine Lie group (see Theorem \ref{F22}). Finally, we give necessary and sufficient conditions to say when the left invariant flat affine symplectic connections on the twisted cotangent symplectic Lie group are geodesically complete (see Proposition \ref{F23}).

\section{New characterization of flat affine manifolds}
In this short section we give a new characterization of flat affine manifolds in terms of an action of the Lie algebra of classical infinitesimal affine transformations on the bundle of linear frames. As an immediate consequence of such result we characterize flat Riemannian metrics, flat affine connections leaving parallel a volume form, and flat affine symplectic connections. We show that such an infinitesimal action can be integrated (in the Lie-Palais' Theorem sense) in the case of compact parallelizable flat Riemannian manifolds.

In what follows $M$ denotes a connected and paracompact $n$-dimensional smooth manifold without boundary, $P=L(M)$ its bundle of linear frames, $\theta$ the canonical 1-form, $\Gamma$ a linear connection on $P$ of connection 1-form $\mathcal{A}$, and $\nabla$ the covariant derivative on $M$ associated to $\Gamma$. Consider the right action of $\textsf{GL}(n,\mathbb{R})$ on $P$.  Denote by $H^\ast$ the fundamental vector field associated to an element $H\in\mathfrak{gl}(n,\mathbb{R})$ and $B(\xi)$ the standard horizontal vector field associated to $\xi\in\mathbb{R}^n$. This vector field is determined by the relation $\theta(B(\xi))=\xi$. Recall that the connection 1-form $\mathcal{A}$ associated to $\Gamma$ is a $\mathfrak{gl}(n,\mathbb{R})$-valued $1$-form over $P$ satisfying
$$\mathcal{A}(H^\ast)=H,\qquad H\in\mathfrak{gl}(n,\mathbb{R})\quad \text{and}$$
$$R_a^*\mathcal{A}=\text{Ad}_{a^{-1}}\circ \mathcal{A},\qquad a\in\textsf{GL}(n,\mathbb{R}),$$
where $R_a$ is the action of $a\in\textsf{GL}(n,\mathbb{R})$ on $P$. Furthermore, the canonical  1-form $\theta$ is an $\mathbb{R}^n$-valued tensorial 1-form over $P$ of type $(\textsf{GL}(n,\mathbb{R}),\mathbb{R}^n)$.
Also, we have
$$(R_a)_\ast B(\xi)=B(a^{-1}\xi),\qquad a\in\textsf{GL}(n,\mathbb{R}),\quad \xi\in\mathbb{R}^n.$$
It is well known that the curvature 2-form $\Omega_\mathcal{A}\in\Omega^2(P,\mathfrak{gl}(n,\mathbb{R}))$ and the torsion 2-form $\Theta\in\Omega^2(P,\mathbb{R}^n)$ associated to the connection 1-form $\mathcal{A}$  respectively satisfy the Cartan's structure equations
$$\text{d}\mathcal{A}(X,Y)=-\dfrac{1}{2}[\mathcal{A}(X),\mathcal{A}(Y)]+\Omega_\mathcal{A}(X,Y)\quad\text{and}$$
$$
\text{d}\theta(X,Y)=-\dfrac{1}{2}(\mathcal{A}(X)\cdot \theta(Y)-\mathcal{A}(Y)\cdot\theta(X))+
\Theta(X,Y),$$
for all $X,Y\in T_u P$ with $u\in P$. When $\Omega_\mathcal{A}=0$ and $\Theta=0$ we say that $\Gamma$ is a \emph{flat affine connection}.

It is simple to check that a connection $\Gamma$ on $P$ is flat affine if and only if both the curvature and torsion tensors of the corresponding covariant derivative $\nabla$ are null. The pair $(M,\nabla)$ is called a \emph{flat affine manifold} if $\nabla$ is a flat affine connection on $M$. That $(M,\nabla)$ is a flat affine manifold is equivalent to the existence of a maximal atlas of $M$ whose change of coordinates are restrictions of affine transformations of $\mathbb{R}^n$ (see \cite{AM}). The set of affine transformation of $\mathbb{R}^n$, denoted by $\text{Aff}(\mathbb{R}^n)$, is a Lie group isomorphic to the semi-direct  product $\mathbb{R}^n\rtimes_{Id}\textsf{GL}(n,\mathbb{R})$. The Lie algebra of $\textsf{Aff}(\mathbb{R}^n)$ is the product vector space $\mathfrak{aff}(\mathbb{R}^n)= \mathbb{R}^n\rtimes_{id} \mathfrak{gl}(n,\mathbb{R})$ with Lie bracket
$$[(\xi,H),(\xi',H')]=(H\xi'-H'\xi,[H,H'])\quad H,H'\in\mathfrak{gl}(n,\mathbb{R}),\quad\xi,\xi'\in\mathbb{R}^n.$$

In these terms, we have the following characterization of flat affine manifolds.
\begin{theorem}\label{F1}
	A connection $\Gamma$ on $P$ is flat affine, if and only if
	\begin{align*}
	\widetilde{\eta}: \mathfrak{aff}(\mathbb{R}^n) &\longrightarrow \mathfrak{X}(P)\\
	(\xi,H) &\longmapsto B(\xi)+H^\ast,
	\end{align*}
	is an infinitesimal action of $\mathfrak{aff}(\mathbb{R}^n)$ over $P$.
\end{theorem}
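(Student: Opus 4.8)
The plan is to reduce the infinitesimal-action property of $\widetilde{\eta}$ to a single bracket identity and then evaluate that identity using the absolute parallelism of $P=L(M)$ together with Cartan's structure equations. Since $\widetilde{\eta}$ is manifestly linear, it is an infinitesimal action exactly when $[\widetilde{\eta}(\xi,H),\widetilde{\eta}(\xi',H')]=\widetilde{\eta}([(\xi,H),(\xi',H')])$ for all $(\xi,H),(\xi',H')\in\mathfrak{aff}(\mathbb{R}^n)$. Expanding the left-hand side bilinearly produces brackets of the four types $[H^\ast,(H')^\ast]$, $[H^\ast,B(\xi')]$, $[B(\xi),(H')^\ast]$ and $[B(\xi),B(\xi')]$. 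The first three are governed by the classical identities $[H^\ast,(H')^\ast]=[H,H']^\ast$ (the assignment $H\mapsto H^\ast$ being a Lie algebra morphism for the right $\textsf{GL}(n,\mathbb{R})$-action) and $[H^\ast,B(\xi)]=B(H\xi)$ (obtained by differentiating $(R_a)_\ast B(\xi)=B(a^{-1}\xi)$ at $a=\exp(-tH)$); comparing these with the bracket of $\mathfrak{aff}(\mathbb{R}^n)$ shows that, for \emph{any} connection $\Gamma$, these three contributions already equal $\widetilde{\eta}$ of the corresponding brackets. Hence $\widetilde{\eta}$ is an infinitesimal action if and only if $[B(\xi),B(\xi')]=\widetilde{\eta}([(\xi,0),(\xi',0)])=\widetilde{\eta}(0,0)=0$ for all $\xi,\xi'\in\mathbb{R}^n$.

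It then remains to show that $[B(\xi),B(\xi')]=0$ for all $\xi,\xi'$ is equivalent to the flatness of $\Gamma$. Here I would use that $\theta_u\oplus\mathcal{A}_u\colon T_uP\to\mathbb{R}^n\oplus\mathfrak{gl}(n,\mathbb{R})$ is an isomorphism for every $u\in P$, so that a vector field on $P$ vanishes precisely when it is annihilated by both $\theta$ and $\mathcal{A}$. Substituting $X=B(\xi)$, $Y=B(\xi')$ into the two structure equations and using that $B(\xi),B(\xi')$ are horizontal (whence $\mathcal{A}(B(\xi))=\mathcal{A}(B(\xi'))=0$) and that $\theta(B(\xi))=\xi$, $\theta(B(\xi'))=\xi'$ are constant, the terms $\text{d}\theta(B(\xi),B(\xi'))$ and $\text{d}\mathcal{A}(B(\xi),B(\xi'))$ collapse to $-\tfrac{1}{2}\theta([B(\xi),B(\xi')])$ and $-\tfrac{1}{2}\mathcal{A}([B(\xi),B(\xi')])$, yielding
$$\theta([B(\xi),B(\xi')])=-2\,\Theta(B(\xi),B(\xi')),\qquad \mathcal{A}([B(\xi),B(\xi')])=-2\,\Omega_{\mathcal{A}}(B(\xi),B(\xi')).$$
Thus $[B(\xi),B(\xi')]=0$ for all $\xi,\xi'$ is equivalent to the vanishing of $\Theta$ and $\Omega_{\mathcal{A}}$ on every pair of standard horizontal vector fields.

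To conclude: if $\Gamma$ is flat affine then $\Theta=0$ and $\Omega_{\mathcal{A}}=0$, so the displayed right-hand sides vanish and $\widetilde{\eta}$ is an infinitesimal action; conversely, if $[B(\xi),B(\xi')]=0$ for all $\xi,\xi'$, then---since $\Theta$ and $\Omega_{\mathcal{A}}$ are tensorial (horizontal) $2$-forms---$\Theta(X,Y)$ and $\Omega_{\mathcal{A}}(X,Y)$ depend only on the horizontal parts of $X,Y$, and every horizontal vector at $u$ is of the form $B(\eta)_u$, so $\Theta=0=\Omega_{\mathcal{A}}$ and $\Gamma$ is flat affine. The only point demanding care is the bookkeeping of the factor-$\tfrac{1}{2}$ convention for $\text{d}$ implicit in the stated structure equations and the sign in $[H^\ast,B(\xi)]=B(H\xi)$; once these are pinned down there is no genuine obstacle, the argument being a direct application of the formalism of the canonical $1$-form and standard horizontal vector fields on $L(M)$.
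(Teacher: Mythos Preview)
Your proof is correct and follows essentially the same route as the paper: both reduce the infinitesimal-action condition to $[B(\xi),B(\xi')]=0$ via the identities $[H^\ast,(H')^\ast]=[H,H']^\ast$ and $[H^\ast,B(\xi)]=B(H\xi)$, and then link this to flatness through the structure equations. The only presentational difference is that the paper invokes \cite[p.~136]{KN} for the forward direction (flatness $\Rightarrow$ $[B(\xi),B(\xi')]$ is simultaneously vertical and horizontal) and sketches a vertical/horizontal case analysis for the converse, whereas you carry out the structure-equation computation explicitly and use tensoriality of $\Theta$ and $\Omega_{\mathcal{A}}$---a slightly cleaner packaging of the same argument.
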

\begin{proof}
	It is well known that the map $\mathfrak{gl}(n,\mathbb{R})\to \mathfrak{X}(P)$, defined by $H\mapsto H^\ast$, is a Lie algebra homomorphism. Hence
	$$
	\widetilde{\eta}([(\xi,H),(\xi',H')])=B(H\xi')-B(H'\xi)+[H^\ast,H'^\ast].
	$$
	for all $(\xi,H)$ and $(\xi',H')$ in $\mathfrak{aff}(\mathbb{R}^n)$. Moreover, for all $H\in \mathfrak{gl}(n,\mathbb{R})$ and $\xi\in\mathbb{R}^n$, we have $[H^\ast,B(\xi)]=B(H\xi)$. Thus
	$$[\widetilde{\eta}(\xi,H),\widetilde{\eta}(\xi',H')]=[B(\xi),B(\xi')]+B(H\xi')-B(H'\xi)+[H^\ast,H'^\ast].$$
	Therefore, the map $\widetilde{\eta}$ defines an infinitesimal action of $\mathfrak{aff}(\mathbb{R}^n)$ over $P$ if and only if $[B(\xi),B(\xi')]=0$ for all $\xi,\xi'\in\mathbb{R}^n$.
	
	If $\Gamma$ is a flat affine connection, then $[B(\xi),B(\xi')]$ is a vertical (respectively $[B(\xi),B(\xi')]$ is a horizontal) vector field (see \cite[p. 136]{KN}). Thus, $[B(\xi),B(\xi')]=0$ for all $\xi,\xi'\in\mathbb{R}^n$. Conversely, it is well known that if $X$ is a vertical vector at $u\in P$ there exists an element $H\in\mathfrak{gl}(n,\mathbb{R})$ such that $X=H^\ast_u$. On the other hand, if $Y$ is a horizontal vector at $u\in P$ there exists $\xi\in\mathbb{R}^n$ so that $Y=B(\xi)_u$. Therefore, making use of the Cartan's structure equations for $\Gamma$, a simple computation allows us to show that considering the cases when: $X$ and $Y$ are vertical or horizontal or one is vertical and the another is horizontal, then the condition $[B(\xi),B(\xi')]=0$ for all $\xi,\xi'\in\mathbb{R}^n$ implies directly that $\Gamma$ is a flat affine connection on $P$.
\end{proof}
\begin{remark}
	Under the assumptions of Theorem \ref{F1} it is easy to verify that the infinitesimal action $\widetilde{\eta}$ is effective.
\end{remark}
\begin{Corollary}\label{CorollaryCharacterization}
	\begin{enumerate}
		\item Let $(M,g)$ be a Riemannian manifold and $\textsf{O}(n,\mathbb{R})\hookrightarrow L(M)^g\stackrel{\pi}{\hbox to 18pt{\rightarrowfill}} M$ the bundle of orthonormal frames over $M$ induced by $g$. The Levi-Civita connection $\Gamma^g$ on $L(M)^g$ is flat if and only if the restriction of $\widetilde{\eta}$ to $\mathbb{R}^n\rtimes_{id} \mathfrak{o}(n,\mathbb{R})$ is an infinitesimal action of $\mathbb{R}^n\rtimes_{id} \mathfrak{o}(n,\mathbb{R})$ over $L(M)^g$.
		\item  If $\sigma$ is a volume form on $M$ and $\textsf{SL}(n,\mathbb{R})\hookrightarrow L(M)^\sigma\stackrel{\pi}{\hbox to 18pt{\rightarrowfill}} M$ is the bundle of special frames over $M$ determined by $\sigma$, then a linear connection $\Gamma^\sigma$ on $L(M)^\sigma$ is flat affine if and only if the restriction of $\widetilde{\eta}$ to $\mathbb{R}^n\rtimes_{id} \mathfrak{sl}(n,\mathbb{R})$ is an infinitesimal action of $\mathbb{R}^n\rtimes_{id} \mathfrak{sl}(n,\mathbb{R})$ over $L(M)^\sigma$.
	\end{enumerate}
\end{Corollary}
It is important to have in mind that linear connections on $L(M)^\sigma$ are in bijective correspondence with covariant derivatives $\nabla$ on $(M,\sigma)$ such that $\nabla \sigma=0$.
\begin{proposition}\label{F2}
	Let $(M,g)$ be a compact and parallelizable flat Riemannian manifold. Then, the infinitesimal action $\widetilde{\eta}_g: \mathbb{R}^n\rtimes_{id} \mathfrak{o}(n,\mathbb{R}) \to \mathfrak{X}(L(M)^g)$ of Corollary \ref{CorollaryCharacterization} is integrable.
\end{proposition}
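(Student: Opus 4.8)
The plan is to verify the hypotheses of the Lie--Palais integrability theorem, which asserts that a homomorphism of Lie algebras $\rho\colon \mathfrak{g}\to\mathfrak{X}(N)$ with $\mathfrak{g}$ finite dimensional integrates to a smooth action of the simply connected Lie group $G$ with Lie algebra $\mathfrak{g}$ as soon as $N$ is compact (or, more generally, as soon as the vector fields $\rho(X)$ are complete for $X$ ranging over a generating set of $\mathfrak{g}$). Here $\mathfrak{g}=\mathbb{R}^n\rtimes_{id}\mathfrak{o}(n,\mathbb{R})$ is finite dimensional, and Corollary \ref{CorollaryCharacterization}(1), applied to the flat Riemannian manifold $(M,g)$, tells us precisely that $\widetilde{\eta}_g$ is a Lie algebra homomorphism into $\mathfrak{X}(L(M)^g)$. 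So the whole statement reduces to a completeness/compactness issue on the orthonormal frame bundle.

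First I would observe that $L(M)^g$ is itself compact: it is the total space of the principal bundle $\textsf{O}(n,\mathbb{R})\hookrightarrow L(M)^g\to M$, whose base is compact by hypothesis and whose structure group $\textsf{O}(n,\mathbb{R})$ is compact, hence so is the total space. By Lie--Palais this already yields that $\widetilde{\eta}_g$ integrates to a smooth action on $L(M)^g$ of the simply connected Lie group with Lie algebra $\mathbb{R}^n\rtimes_{id}\mathfrak{o}(n,\mathbb{R})$, i.e. of $\mathbb{R}^n\rtimes\widetilde{\textsf{SO}}(n,\mathbb{R})$.

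I would then use the parallelizability to make the argument self-contained and to exhibit the completeness of the generators directly. Orthonormalizing a global frame fiberwise (Gram--Schmidt) produces a global section of $L(M)^g$, hence a trivialization $L(M)^g\cong M\times \textsf{O}(n,\mathbb{R})$. In this picture the fundamental fields $H^\ast$ with $H\in\mathfrak{o}(n,\mathbb{R})$ generate the right translation flow of the compact group $\textsf{O}(n,\mathbb{R})$ and are therefore complete, while an integral curve of a standard horizontal field $B(\xi)$ projects by $\pi$ to a geodesic of $(M,g)$; since a compact Riemannian manifold is geodesically complete (Hopf--Rinow), such geodesics, and hence their horizontal lifts, are defined for all time, so each $B(\xi)$ is complete. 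As $\{(\xi,0)\}\cup\{(0,H)\}$ generates $\mathbb{R}^n\rtimes_{id}\mathfrak{o}(n,\mathbb{R})$, Palais' criterion again produces a global action of $\mathbb{R}^n\rtimes\widetilde{\textsf{SO}}(n,\mathbb{R})$ on $L(M)^g$ whose infinitesimal generator is $\widetilde{\eta}_g$, which is exactly the asserted integrability.

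The one point requiring care is that the individual generators $B(\xi)$ and $H^\ast$ being complete does not by itself force an arbitrary combination $B(\xi)+H^\ast$ to be complete; this is precisely where one must invoke either the compactness of $L(M)^g$ or the form of the Lie--Palais theorem that reduces global integrability to completeness along a generating set of the Lie algebra. Beyond this, the remaining steps are routine: identifying the simply connected group (which for $n\ge 3$ is $\mathbb{R}^n\rtimes\textsf{Spin}(n)$) and, if one wishes, checking that the resulting action descends through $\pi$ to the affine action on $M$ inherited from its flat structure.
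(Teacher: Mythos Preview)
Your argument is correct and follows essentially the same route as the paper: establish that $L(M)^g$ is compact and then invoke the Lie--Palais theorem to integrate the infinitesimal action of $\mathbb{R}^n\rtimes_{id}\mathfrak{o}(n,\mathbb{R})$. The paper obtains compactness exactly via your second paragraph (Gram--Schmidt on a global frame trivializes $L(M)^g\cong M\times\textsf{O}(n,\mathbb{R})$, which is compact), and then applies Lie--Palais directly; your additional discussion of completeness of the individual fields $B(\xi)$ and $H^\ast$ is not needed once compactness is in hand, as you yourself note.

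One small but genuine difference worth pointing out: your first observation, that the total space of a principal bundle with compact base and compact structure group is automatically compact, bypasses parallelizability entirely. The paper uses parallelizability only to produce the trivialization and thereby the compactness of $L(M)^g$; your argument shows that this hypothesis is in fact superfluous for the conclusion, so the proposition holds for any compact flat Riemannian manifold.
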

\begin{proof}
	If $\lbrace X_1,\cdots, X_n \rbrace$ is a parallelism of $M$, then by the Gram--Schmidt orthogonalization process we can construct an orthonormal parallelism $\lbrace \widetilde{X_1},\cdots,\widetilde{X_n}\rbrace$ of $M$. Thus, $\textsf{O}(n,\mathbb{R})\hookrightarrow L(M)^g\stackrel{\pi}{\hbox to 18pt{\rightarrowfill}} M$ is isomorphic to the trivial $\textsf{O}(n,\mathbb{R})$-principal bundle $\textsf{O}(n,\mathbb{R})\times M$. Hence, as $\textsf{O}(n,\mathbb{R})\times M$ is a compact smooth manifold, if 
	$G$ is a simply connected Lie group with Lie algebra $\mathbb{R}^n\rtimes_{id} \mathfrak{o}(n,\mathbb{R})$, then by the Lie-Palais' Theorem there exists a unique smooth right action of $G$ on $L(M)^g$ such that the fundamental vector field associated to $(\xi,H)\in\mathbb{R}^n\rtimes_{id} \mathfrak{o}(n,\mathbb{R})$ is $\widetilde{\eta}_g(\xi,H)$.
\end{proof}
As every Lie group is parallelizable, then circle $S^1$ and $n$-torus $\mathbb{T}^n$ are examples of compact parallelizable flat Riemannian manifolds, hence both satisfy the Proposition \ref{F2}.

\section{Flat affine symplectic Lie groups}
To determine flat affine Lie groups (i.e., Lie groups which carry a left invariant flat affine structure) is an open problem proposed by J. Milnor in \cite{Mi}. From now on, our purpose is to study this problem in the case of connected symplectic Lie groups endowed with left invariant flat affine symplectic connections. In this section, we treat some properties of these kind of Lie groups which will be useful throughout the paper. 
\subsection{Flat affine symplectic manifolds}
Let $(M,\omega)$ be a symplectic manifold of dimension $2n$. A symplectic frame at $p\in M$ is a symplectic ordered basis of $(T_pM,\omega_p)$. Let us denote by $L(M)^\omega$ the set of all symplectic frames at all points of $M$ and by $\pi$ the natural projection of $L(M)^\omega$ onto $M$. Using Darboux's Theorem, it is possible to determinate a differentiable structure over $L(M)^\omega$ so that the map $\pi$ is smooth and $L(M)^\omega$ has a natural structure of $\textsf{Sp}(n,\mathbb{R})$-principal bundle. Consider the inclusion map $\iota:L(M)^\omega \hookrightarrow P$, the inclusion group homomorphism $\varphi: \textsf{Sp}(n,\mathbb{R})\hookrightarrow\textsf{GL}(2n,\mathbb{R})$, and the identity map $\text{Id}_M$ of $M$. It is simple to check that $(\iota,\varphi,\text{Id}_M)$ determines a homomorphism of principal bundles between $L(M)^\omega$ and $P$. Thus, every connection $\Gamma_\omega$ on $L(M)^\omega$ determines a unique linear connection $\Gamma$ on $P$ (see \cite[p. 79]{KN}). A connection $\Gamma_\omega$ on $P^\omega$ is called a \emph{linear symplectic connection}. If we denote by $(e_1,\cdots,e_{2n})$ to the canonical symplectic basis of $(\mathbb{R}^{2n},\omega_0)$, and take $u=(X_1,\cdots,X_n,\widetilde{X_1},\cdots,\widetilde{X_n})$ as a symplectic frame at $p\in M$, then the map $u:(\mathbb{R}^{2n},\omega_0)\to (T_pM,\omega_p)$ defined by $u(e_j)=X_j$ and $u(e_{j+n})=\widetilde{X_j}$ for all $j=1,2,\cdots,n$, is an isomorphism of symplectic vector spaces. We define the $\mathbb{R}^{2n}$-valued 1-form $\theta_\omega$ on $P^\omega$ by
$$\theta_\omega(X):=u^{-1}(\pi_{\ast,u}(X)),\qquad X\in T_u P^\omega.$$

If $\Gamma_\omega$ is a connection on $L(M)^\omega$, then the torsion 2-form $\Theta_\omega$ associated to the canonical 1-form $\theta_\omega$ is defined as the exterior covariant differential of $\theta_\omega$ with respect to $\Gamma_\omega$ and it is also completely determined by the second Cartan's structure equation. When both the curvature 2-form $\Omega_{\mathcal{A}_\omega}$\footnote{Here $\mathcal{A}_\omega$ denotes the connection 1-form associated to $\Gamma_\omega$.} and the torsion 2-form $\Theta_\omega$ are null, we say that $\Gamma_\omega$ is a \emph{flat affine symplectic connection}. As a consequence of Theorem \ref{F1} we have
\begin{proposition}\label{CSC}
	Let $(M,\omega)$ be a $2n$-dimensional symplectic manifold and $\textsf{Sp}(n,\mathbb{R})\hookrightarrow L(M)^\omega\stackrel{\pi}{\hbox to 18pt{\rightarrowfill}} M$ the bundle of symplectic frames over $M$ induced by $\omega$. A symplectic linear connection $\Gamma^\omega$ on $L(M)^\omega$ is flat affine if and only if the restriction of $\widetilde{\eta}$ to $\mathbb{R}^n\rtimes_{id} \mathfrak{sp}(n,\mathbb{R})$ is an infinitesimal action of $\mathbb{R}^n\rtimes_{id} \mathfrak{sp}(n,\mathbb{R})$ over $L(M)^\omega$.
\end{proposition}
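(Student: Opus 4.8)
The plan is to reduce the statement to Theorem \ref{F1} through the principal bundle homomorphism $(\iota,\varphi,\mathrm{Id}_M):L(M)^\omega\to P$ recalled above. I would begin with the algebraic observation that $\mathbb{R}^{2n}\rtimes_{id}\mathfrak{sp}(n,\mathbb{R})$ is a Lie subalgebra of $\mathfrak{aff}(\mathbb{R}^{2n})$: by the bracket formula $[(\xi,H),(\xi',H')]=(H\xi'-H'\xi,[H,H'])$ this uses only that $\mathfrak{sp}(n,\mathbb{R})$ is closed under the commutator and preserves $\mathbb{R}^{2n}$. Restricting $\widetilde{\eta}$ to this subalgebra, with $B(\xi)$ now denoting the standard horizontal field $B_\omega(\xi)$ of $\Gamma^\omega$ on $L(M)^\omega$ (which makes sense since $\theta_\omega$ is $\mathbb{R}^{2n}$-valued) and $H^\ast$ the fundamental field of $H\in\mathfrak{sp}(n,\mathbb{R})$, produces the map $\widetilde{\eta}_\omega(\xi,H)=B_\omega(\xi)+H^\ast$ of the statement.

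Next I would set up the correspondence between $L(M)^\omega$ and $P$. Since $\Gamma$ is the linear connection induced by $\Gamma^\omega$, the connection forms satisfy $\iota^\ast\mathcal{A}=\mathcal{A}_\omega$ (viewing $\mathfrak{sp}(n,\mathbb{R})\subset\mathfrak{gl}(2n,\mathbb{R})$), while $\iota^\ast\theta=\theta_\omega$ because $\pi\circ\iota$ is the projection of $L(M)^\omega$ and the frame isomorphism $u:\mathbb{R}^{2n}\to T_pM$ defining $\theta_\omega$ is the very one defining $\theta$. Hence $\iota$ maps $\Gamma^\omega$-horizontal vectors to $\Gamma$-horizontal vectors and preserves the value of the soldering form, so $B_\omega(\xi)$ and $B(\xi)$ are $\iota$-related for each $\xi\in\mathbb{R}^{2n}$, and therefore so are $[B_\omega(\xi),B_\omega(\xi')]$ and $[B(\xi),B(\xi')]$. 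From the Cartan structure equations one also gets $\iota^\ast\Omega_{\mathcal{A}}=\Omega_{\mathcal{A}_\omega}$ and $\iota^\ast\Theta=\Theta_\omega$; conversely every point of $P$ has the form $\iota(q)\cdot a$ with $q\in L(M)^\omega$ and $a\in\textsf{GL}(2n,\mathbb{R})$, so by tensoriality and equivariance the vanishing of $\Omega_{\mathcal{A}},\Theta$ on $\iota(L(M)^\omega)$ propagates to all of $P$, and $\Gamma^\omega$ is flat affine if and only if $\Gamma$ is. Running the computation from the proof of Theorem \ref{F1} inside $L(M)^\omega$ — using that $H\mapsto H^\ast$ is a Lie algebra homomorphism on $\mathfrak{sp}(n,\mathbb{R})$ and $[H^\ast,B_\omega(\xi)]=B_\omega(H\xi)$ — shows that $\widetilde{\eta}_\omega$ is an infinitesimal action if and only if $[B_\omega(\xi),B_\omega(\xi')]=0$ for all $\xi,\xi'\in\mathbb{R}^{2n}$; by $\iota$-relatedness, injectivity of $\iota_\ast$, and the equivariance $(R_a)_\ast B(\xi)=B(a^{-1}\xi)$, this is equivalent to $[B(\xi),B(\xi')]=0$ on all of $P$, hence by Theorem \ref{F1} to $\Gamma$ being flat affine, hence to $\Gamma^\omega$ being flat affine.

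The step that must be handled carefully — not because anything is likely to fail, but because it is the crux — is the compatibility between $L(M)^\omega$ and $P$: that $\iota$ genuinely intertwines $\theta_\omega$ with $\theta$ and the two horizontal distributions, and that flatness transfers in both directions between $\Gamma^\omega$ and $\Gamma$. With that correspondence fixed, the proposition is a formal consequence of Theorem \ref{F1}. A self-contained alternative, bypassing $P$ altogether, is to repeat verbatim the structure-equation argument of Theorem \ref{F1} inside $L(M)^\omega$: the Cartan structure equations, and the dichotomy that $[B_\omega(\xi),B_\omega(\xi')]$ is simultaneously vertical and horizontal when $\Gamma^\omega$ is flat affine (cf. \cite[p.\,136]{KN}), are valid on any principal bundle carrying a connection, with $\mathbb{R}^{2n}\rtimes_{id}\mathfrak{sp}(n,\mathbb{R})$ playing the role of $\mathfrak{aff}(\mathbb{R}^{2n})$, the vertical vectors being the fundamental fields of $\mathfrak{sp}(n,\mathbb{R})$ and the horizontal ones the $B_\omega(\xi)$ with $\xi\in\mathbb{R}^{2n}$.
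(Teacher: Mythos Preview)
Your proposal is correct; the paper gives no explicit proof and simply presents the proposition ``as a consequence of Theorem~\ref{F1}'', which is exactly what either of your two routes achieves. Your self-contained alternative---rerunning the structure-equation argument of Theorem~\ref{F1} directly on $L(M)^\omega$ with $\mathbb{R}^{2n}\rtimes_{id}\mathfrak{sp}(n,\mathbb{R})$ in place of $\mathfrak{aff}(\mathbb{R}^{2n})$---is the most direct reading of the paper's intent, while your first route via the bundle homomorphism $(\iota,\varphi,\mathrm{Id}_M)$ is a valid but slightly more elaborate way to reach the same conclusion.
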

A covariant derivative $\nabla$ on a symplectic manifold $(M,\omega)$ is called \emph{symplectic} if $\nabla \omega =0$, that is
$$X\cdot \omega(Y,Z)=\omega(\nabla_XY,Z)+\omega(Y,\nabla_XZ),\qquad X,Y,Z\in \mathfrak{X}(M).$$
\begin{remark}
	\begin{enumerate}
		\item There exists a bijective correspondence between linear symplectic connections $\Gamma_\omega$ on $L(M)^\omega$ and symplectic covariant derivatives on $(M,\omega)$.
		\item Given a paracompact symplectic manifold $(M,\omega)$, it is always possible to ensure that there exists a torsion free symplectic connection on $M$ but unlike the Riemannian case this is not unique (see \cite{V}).
		\item The holonomy group of a symplectic connection on a connected symplectic manifold is identified with a subgroup of $\textsf{Sp}(n,\mathbb{R})$ defined up to conjugation. Moreover, there is always a natural surjective group homomorphism between the fundamental group $\Pi_1(M)$ and $\text{Sp}(n,\mathbb{R})/G$ where $G$ is a normal Lie subgroup of $\textsf{Sp}(n,\mathbb{R})$ defined up to conjugation.
		\item If $(M,\omega)$ is a compact symplectic manifold endowed with a flat affine symplectic connection $\nabla$, then the Euler characteristic of $M$ vanishes. This follows from the fact that the volume form induced on $M$ by $\omega$ is parallel with respect to $\nabla$. This result is a particular case of Chern's conjecture for compact flat affine manifolds and it is a direct consequence of the case proved by B. Klingler when the compact flat affine manifold admits a parallel volume form (see \cite{K}).
	\end{enumerate}
\end{remark}
\begin{Definition}
	A triple $(M,\omega,\nabla)$ where $(M,\omega)$ is a symplectic manifold and $\nabla$ is a flat affine symplectic connection on $M$ is called a \emph{flat affine symplectic manifold}.
\end{Definition}
To have a flat affine symplectic structure on a $2n$-dimensional manifold $M$ is equivalent to having a maximal atlas of $M$ whose change of coordinates are restriction of elements of $\mathbb{R}^{2n}\rtimes_{Id}\textsf{Sp}(n,\mathbb{R})$ (see \cite{Fr}). Two interesting examples of flat affine symplectic manifolds are the 2-torus $\mathbb{T}^2$ (see \cite{Ku} or \cite[p. 211]{KN}) and the ordinary cylinder $S^1\times \mathbb{R}$ (compare \cite[p. 223]{KN}).
\subsection{Flat affine symplectic Lie groups}
In what follows $G$ denotes a connected real Lie group and $\mathfrak{g}:=T_\epsilon G$ its Lie algebra. A symplectic form $\omega^+$ (respectively a linear connection $\nabla$) on $G$ is called \emph{left invariant} if $L_\sigma:G\to G$, defined by $\tau\mapsto \sigma\tau$, is a symplectomorphism of $(G,\omega^+)$ (respectively an affine transformation of $(G,\nabla)$) for all $\sigma\in G$. 
\begin{Definition}
	\begin{enumerate}
		\item \cite{LM}, \cite{MR} A pair $(G,\omega^+)$ where $\omega^+$ is a left invariant symplectic form on $G$ is called a \emph{symplectic Lie group}.
		\item  \cite{Ko}, \cite{Vi}, \cite{M} A pair $(G,\nabla)$ where $\nabla$ is a left invariant flat affine connection on $G$ is called a \emph{flat affine Lie group}.
	\end{enumerate}
\end{Definition}

From now on we will deal with the following object:
\begin{Definition}
	A triple $(G,\omega^+,\nabla)$ where $(G,\omega^+)$ is a symplectic Lie group and $\nabla$ is a left invariant flat affine connection on $G$ which is symplectic with respect to $\omega^+$ is called a \emph{flat affine symplectic Lie group}.
\end{Definition}
If $(G,\omega^+)$ is a symplectic Lie group and $\nabla$ is a left invariant symplectic connection on $G$, then
$$ \omega^+(\nabla_{x^+}y^+,z^+)+\omega^+(y^+,\nabla_{x^+}z^+)=0,\qquad x,y,z\in\mathfrak{g}.
$$
Here $x^+$ denotes the left invariant vector field associated to $x\in\mathfrak{g}$.
\begin{remark}\label{inducedconnection}
	Every symplectic Lie group $(G,\omega^+)$ is also flat affine. More precisely, the left invariant symplectic form $\omega^+$ defines on $G$ a unique natural left invariant flat affine connection $\nabla$ determined by the formula
	$$\omega^+(\nabla_{x^+}y^+,z^+)=-\omega^+(y^+,[x^+,z^+]),\qquad x,y,z\in\mathfrak{g}.$$
	This result was proved by A. Bon-Yau Chu in \cite{B}. It is easy to show that $\nabla$ is symplectic with respect to $\omega^+$ if and only if $G$ is commutative.
\end{remark}

Some results and classifications of flat affine symplectic Lie groups are already known in the literature; see for instance  \cite{An}, \cite{MR2}, \cite{Ba}, and \cite{NB}. In this paper we want to exhibit other interesting properties of these kind of Lie groups as well as give two methods for constructing simply connected flat affine symplectic Lie groups in every even dimension. We will do it using the algebraic structure and the properties of flat affine symplectic Lie algebras. These objects are defined below.

The following two results will be vital for exploring the consequences of the constructions given in next sections. First of all, we give the following characterization of flat affine symplectic Lie groups using \'etale affine representations\footnote{Let $V$ be an $n$-dimensional vector space. A Lie group homomorphism $\rho:G\to\textsf{Aff}(V)$ is called an \emph{\'etale affine representation} if the left action of $G$ on $V$ determined by $\rho$ admits a point with open orbit and discrete isotropy.}. Affirmation (1) if and only if (2) is well known (see for instance \cite{MR2} and \cite{NB}).
\begin{theorem}\label{CharacterizationLeft}
	Let $G$ be a connected Lie group of dimension $2n$, $\mathfrak{g}$ its Lie algebra, and $\widetilde{G}$ its universal covering Lie group. The following are equivalent
	\begin{enumerate}
		\item There exist both $\omega^+$ and $\nabla$ on $G$ such that $(G,\omega^+,\nabla)$ is a flat affine symplectic Lie group.
		\item There exist both a nondegenerate scalar 2-cocycle\footnote{$\omega\in \bigwedge^2\mathfrak{g}^*$ is a scalar 2-cocycle on $\mathfrak{g}$ if $\omega([x,y],z)+\omega([y,z],x)+\omega([z,x],y)=0$ for all $x,y,z\in\mathfrak{g}.$} $\omega\in \bigwedge^2\mathfrak{g}^*$ and a bilinear map $\cdot:\mathfrak{g}\times\mathfrak{g}\to \mathfrak{g}$ over $\mathfrak{g}$ satisfying
		\begin{eqnarray}
		&  & \omega(L_x(y),z)+\omega(y,L_x(z))=0, \nonumber\\
		&  & L_{[x,y]}=[L_x,L_y]_{\mathfrak{gl}(\mathfrak{g})},\quad\text{and}\label{left1}\\
		&  & [x,y]=L_x(y)-L_y(x),\qquad x,y,z\in\mathfrak{g},\label{left2}
		\end{eqnarray}
		where $L_x:\mathfrak{g}\to\mathfrak{g}$ is the linear map defined by $L_x(y):=x\cdot y$.
		\item There exists a Lie group homomorphism $\rho: \widetilde{G}\to V\rtimes_{Id}\textsf{Sp}(V,\omega)$ where $(V,\omega)$ is a real $2n$-dimensional symplectic vector space, such that the natural left action of $\widetilde{G}$ over $V$ admits a point with open orbit and discrete isotropy.
	\end{enumerate}
\end{theorem}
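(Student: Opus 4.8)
The plan is to prove the cycle $(1)\Rightarrow(2)\Rightarrow(3)\Rightarrow(1)$, treating $(1)\Leftrightarrow(2)$ as the classical dictionary between left invariant structures on $G$ and algebraic data on $\mathfrak g$ and concentrating on the new equivalence $(2)\Leftrightarrow(3)$. For $(1)\Leftrightarrow(2)$ I would use that a left invariant linear connection $\nabla$ on $G$ corresponds bijectively to a bilinear product $\cdot\colon\mathfrak g\times\mathfrak g\to\mathfrak g$ via $\nabla_{x^{+}}y^{+}=(x\cdot y)^{+}$; setting $L_x(y):=x\cdot y$, the connection $\nabla$ is torsion free iff \eqref{left2} holds, its curvature is $R(x,y)=[L_x,L_y]_{\mathfrak{gl}(\mathfrak g)}-L_{[x,y]}$ so that $\nabla$ is flat iff \eqref{left1} holds, a left invariant symplectic form is the same as a nondegenerate $\omega\in\bigwedge^{2}\mathfrak g^{*}$ with $d\omega^{+}=0$ equivalent to the scalar $2$-cocycle identity, and $\nabla\omega^{+}=0$ translates into $\omega(L_x(y),z)+\omega(y,L_x(z))=0$, i.e.\ $L_x\in\mathfrak{sp}(\mathfrak g,\omega)$ for all $x\in\mathfrak g$. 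This is precisely condition $(2)$, and gives $(1)\Leftrightarrow(2)$ (cf.\ \cite{MR2,NB}).

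For $(2)\Rightarrow(3)$, take $V=\mathfrak g$ with the symplectic form $\omega$ and consider the linear map $x\mapsto(x,L_x)$ into $V\rtimes_{id}\mathfrak{gl}(V)$, where the semidirect sum carries the bracket $[(\xi,H),(\xi',H')]=(H\xi'-H'\xi,[H,H'])$. The relations \eqref{left1} and \eqref{left2} say exactly that this map is a Lie algebra homomorphism, and the condition $L_x\in\mathfrak{sp}(V,\omega)$ ensures it has values in $V\rtimes_{id}\mathfrak{sp}(V,\omega)=\operatorname{Lie}\bigl(V\rtimes_{Id}\textsf{Sp}(V,\omega)\bigr)$. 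Since $\widetilde G$ is simply connected, it integrates to a Lie group homomorphism $\rho\colon\widetilde G\to V\rtimes_{Id}\textsf{Sp}(V,\omega)$. The fundamental vector field of $(x,L_x)$ for the natural action on $V$ is $v\mapsto L_x(v)+x$, with value $x$ at $v=0$; hence the orbit map $\widetilde G\to V$ through $0$ is a local diffeomorphism at the identity, so $0$ has open orbit and discrete isotropy.

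For $(3)\Rightarrow(1)$, differentiating $\rho$ gives a Lie algebra homomorphism $\mathfrak g\to V\rtimes_{id}\mathfrak{sp}(V,\omega)$, $x\mapsto(q(x),\phi(x))$, with $\phi\colon\mathfrak g\to\mathfrak{sp}(V,\omega)$ a Lie algebra homomorphism. If $p\in V$ has open orbit and discrete isotropy, then $x\mapsto\phi(x)(p)+q(x)$ is both surjective and injective, hence a linear isomorphism $\mathfrak g\to V$. Conjugating $\rho$ by the translation $v\mapsto v-p$ (an element of $V\rtimes_{Id}\textsf{Sp}(V,\omega)$) we may assume $p=0$; identifying $V$ with $\mathfrak g$ by the resulting isomorphism $x\mapsto q(x)$ we get $q=\mathrm{id}$, and we set $x\cdot y:=\phi(x)(y)=:L_x(y)$. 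Then $\phi$ being a homomorphism yields \eqref{left1}; the $V$-component of the semidirect-product bracket yields $[x,y]=\phi(x)q(y)-\phi(y)q(x)=L_x(y)-L_y(x)$, i.e.\ \eqref{left2}; and $\phi(x)\in\mathfrak{sp}(V,\omega)$ yields $\omega(L_x(y),z)+\omega(y,L_x(z))=0$. A short antisymmetrization argument, expanding $\omega([x,y],z)+\omega([y,z],x)+\omega([z,x],y)$ with \eqref{left2} and $L_x\in\mathfrak{sp}(V,\omega)$ and then using skew-symmetry of $\omega$ to see that this sum equals its own negative, shows $\omega$ is automatically a scalar $2$-cocycle. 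Thus all of $(2)$ holds, and by $(2)\Leftrightarrow(1)$ we obtain $(1)$ after transporting $\omega$ back to $\mathfrak g$.

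I expect the main obstacle to be the normalization step in $(3)\Rightarrow(1)$: moving the open-orbit point to the origin by an affine conjugation, carefully tracking how this changes the translation part $q$ so that it becomes the identity, and verifying that the $2$-cocycle condition is forced rather than an extra hypothesis. The remaining pieces --- the dictionary between left invariant connections and products on $\mathfrak g$, and integrating a Lie algebra homomorphism on a simply connected group --- are standard.
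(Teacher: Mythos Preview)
Your argument is correct and follows essentially the same route as the paper: the dictionary $(1)\Leftrightarrow(2)$ is handled identically, $(2)\Rightarrow(3)$ via integrating $x\mapsto(x,L_x)$ and checking the orbit map at $0$ is the same, and for $(3)\Rightarrow(1)$ your conjugation-by-translation followed by identification via $q$ is exactly the paper's pullback by the linear isomorphism $\psi_v$ written in two steps. The paper likewise notes that the $2$-cocycle condition on $\omega$ is forced by $L_x\in\mathfrak{sp}(\mathfrak g,\omega)$ together with \eqref{left2}, so your verification there matches as well.
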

\begin{proof}
	If $(G,\omega^+,\nabla)$ is a flat affine symplectic Lie group, it is easy to check that $\omega:=\omega^+_\epsilon$ is a nondegenerate scalar 2-cocycle of $\mathfrak{g}$ and $\cdot:\mathfrak{g}\times \mathfrak{g}\to\mathfrak{g}$ defined by $x\cdot y=L_x(y):=(\nabla_{x^+}y^+)(\epsilon)$ is a bilinear product on $\mathfrak{g}$ which satisfies the three identities of item (2).
	
	Now suppose (2), thus the map $\theta\colon \mathfrak{g}\to \mathfrak{g}\rtimes_{id}\mathfrak{sp}(\mathfrak{g},\omega)$ 
	defined by $x\mapsto(x,L_x)$
	is a well defined Lie algebra homomorphism.  Passing to exponential, we get a Lie group homomorphism $\rho\colon \widetilde{G} \to \mathfrak{g}\rtimes_{Id}\textsf{Sp}(\mathfrak{g},\omega)$ sending $\sigma=\textsf{exp}_G(x)$ to $\rho(\sigma)=(Q(\sigma),F_\sigma)$ where
	$$Q(\sigma)=\sum_{k=1}^\infty \dfrac{1}{k!}(L_x)^{k-1}(x)\quad\text{and}\quad F_\sigma=\textsf{Exp}(L_x)=\sum_{k=0}^\infty \dfrac{1}{k!}(L_x)^k.$$
	Thus, as the map $\psi_0:\mathfrak{g}\to\mathfrak{g}$ defined by $x\mapsto \theta(x)(0)=x+L_x(0)=x$ is a linear isomorphism, we have that $0\in\mathfrak{g}$ is a point with open orbit and discrete isotropy for the left action of $\widetilde{G}$ over $\mathfrak{g}$ determined by $\rho$.
	
	Finally, suppose that $\rho\colon \widetilde{G}\to V\rtimes_{Id}\textsf{Sp}(V,\omega)$ 
	is a homomorphism of  Lie groups such that the orbital map  
	$\pi\colon\widetilde{G}\to \text{Orb}(v)$ defined by 
	$\sigma\mapsto Q(\sigma)+F_\sigma(v)$ is a local diffeomorphism for some  
	$v\in V$. Differentiating at the identity of $\widetilde{G}$, 
	we obtain a Lie algebra homomorphism 
	$\theta\colon \mathfrak{g} \to \mathfrak{g}\rtimes_{id}\mathfrak{sp}(V,\omega)$ given by  
	$x \mapsto (q(x),f_x)$, where the linear map  $\psi_v\colon \mathfrak{g} \to V$ defined by 
	$x \mapsto q(x)+f_x(v)$ is a linear isomorphism (see \cite{M}). Moreover, the map $f:\mathfrak{g}\to \mathfrak{sp}(V,\omega)$ defined by $x\to f_x$ is also a Lie algebra homomorphism and $q:\mathfrak{g}\to V$ given by $x\to q(x)$ is a linear map such that 
	\begin{equation}\label{1cocycle}
	q([x,y])=f_x(q(y))-f_y(q(x)), \qquad x,y\in\mathfrak{g}.
	\end{equation}
	Now, we define on $\mathfrak{g}$ the skew-symmetric bilinear form
	$\widetilde{\omega}$ and the bilinear map $\cdot:\mathfrak{g}\times \mathfrak{g}\to \mathfrak{g}$ respectively by
	$$\widetilde{\omega}(x,y)=\omega(\psi_v(x),\psi_v(y))$$
	and
	$$L_x=\psi_v^{-1}\circ f_x\circ \psi_v,\quad y\mapsto x\cdot y=L_x(y),$$
	for all $x,y\in\mathfrak{g}$. Because $f:\mathfrak{g}\to \mathfrak{sp}(V,\omega)$ is a Lie algebra homomorphism, 
	we have that $L_{[x,y]}=[L_x,L_y]_{\mathfrak{gl}(\mathfrak{g})}$. On the other hand, since 
	$q:\mathfrak{g}\to V$ satisfies \eqref{1cocycle}, we conclude that
	$[x,y]=L_x(y)-L_y(x)$. Moreover, the fact that $f_x\in \mathfrak{sp}(V,\omega)$ implies
	\begin{equation}\label{2cocycletilde}
	\widetilde{\omega}(L_x(y),z)+\widetilde{\omega}(y,L_x(z))=0,\qquad x,y,z\in\mathfrak{g}.
	\end{equation}
	
	It follows that the identity \eqref{2cocycletilde} implies that $\widetilde{\omega}$ is a scalar 2-cocycle of $\mathfrak{g}$. Hence, it is simple to check that the left invariant symplectic form 
	$$\omega^+_\sigma(X_\sigma,Y_\sigma):=\widetilde{\omega}((L_{\sigma^{-1}})_{\ast,\sigma}X_\sigma,(L_{\sigma^{-1}})_{\ast,\sigma}Y_\sigma),\quad\sigma\in G,\quad X_\sigma,Y_\sigma\in T_\sigma G,$$
	and the left invariant flat affine connection obtained after extending
	$$\nabla_{x^+}y^+:=(x\cdot y)^+=(L_x(y))^+,\qquad x,y\in\mathfrak{g},$$
	are such that $(G,\omega^+,\nabla)$ is a flat affine symplectic Lie group.
\end{proof}
It is easy to verify that the identities \eqref{left1} and \eqref{left2} of item (2) in the previous theorem imply that
\begin{equation}\label{SGprodut}
x\cdot(y\cdot z)-(x\cdot y)\cdot z = y\cdot(x\cdot z)-(y\cdot x)\cdot z,\qquad x,y,z\in\mathfrak{g}.
\end{equation}

To set up terminology we give the following definition.
\begin{Definition}\cite{Vi}
	A bilinear map over a vector space satisfying the formula \eqref{SGprodut} is called a \emph{left symmetric product} and the corresponding vector space is called a \emph{left symmetric algebra}.
\end{Definition}

Some important geometric consequences of studying left symmetric algebras can be found in \cite{Ki}, \cite{B}, \cite{N}, \cite{M}, and \cite{Bu}. Next object will be relevant for doing the constructions of some simply connected flat affine symplectic Lie groups in the following sections: 
\begin{Definition}\label{FASLA}
	A triple $(\mathfrak{g},\omega,\cdot)$ where $\mathfrak{g}$ is a real finite dimensional Lie algebra, $\omega$ is a nondegenerate scalar 2-cocycle of $\mathfrak{g}$, and $\cdot:\mathfrak{g}\times\mathfrak{g}\to \mathfrak{g}$ is a left symmetric product over $\mathfrak{g}$ whose commutator agrees with the Lie bracket of $\mathfrak{g}$ and it verifies
	\begin{equation}\label{n12.11}
	\omega(x\cdot y,z)+\omega(y,x\cdot z)=0,\qquad x,y,z\in\mathfrak{g},
	\end{equation}
	is called a \emph{flat affine symplectic Lie algebra}.
\end{Definition}
Suppose that $(G,\omega^+,\nabla)$ is a flat affine symplectic Lie group. We can show a weaker claim than item (3) of Theorem \ref{CharacterizationLeft} as follows. Let $(\mathfrak{g},\omega,\cdot)$ be the flat affine symplectic Lie algebra associated to $(G,\omega^+,\nabla)$ and denote by $\mathfrak{g}^\ast$ the dual vector space of $\mathfrak{g}$. If $L^\ast:\mathfrak{g}\to \mathfrak{gl}(\mathfrak{g}^\ast)$ denotes the dual representation of $L$, then a direct computation allows us to show that 
$$\omega([x,y],\cdot)=L^*_x(\omega(y,\cdot))-L^*_y(\omega(x,\cdot)),\qquad x,y\in\mathfrak{g}.$$
Thus, the map $\theta: \mathfrak{g} \to \mathfrak{aff}(\mathfrak{g}^*)$ defined by $x \mapsto (\omega(x,\cdot),L^*_x)$, is a well defined Lie algebra homomorphism. Moreover, as $\omega$ is nondegenerate, the map $\psi_0:\mathfrak{g} \to \mathfrak{g}^*$ defined by $x\mapsto \psi_0(x):=\omega(x,\cdot)$ is a linear isomorphism. Therefore, there exists a Lie group homomorphism $\rho:\widetilde{G}\to\textsf{Aff}(\mathfrak{g}^\ast)$ such that $0\in\mathfrak{g}^\ast$ is a point with open orbit and discrete isotropy. Consequently, the orbital map $\pi:\widetilde{G}\to\text{Orb}(0)\subset \mathfrak{g}^\ast$ which is determined by
$$\pi(\textsf{exp}_G(x))=\sum_{k=1}^\infty \dfrac{1}{k!}(L^*_x)^{k-1}(\omega(x,\cdot)),$$
is a covering map.

\begin{remark}
	If $(G,\omega^+,\nabla)$ is a connected flat affine symplectic Lie group, then the \'etale affine representation $\rho:\widetilde{G}\to \mathfrak{g}\rtimes_{Id}\textsf{Sp}(\mathfrak{g},\omega^+_\epsilon
	)$ obtained by means of the exponential map of $G$ and the Lie algebra homomorphism $\theta:\mathfrak{g} \to \mathfrak{g}\rtimes_{id}\mathfrak{sp}(\mathfrak{g},\omega^+_\epsilon
	)$ which is defined by $x\mapsto(x,L_x)$, is injective. Thus, we can identify the Lie group $\widetilde{G}$ with the subgroup $\rho(\widetilde{G})$ of $\mathfrak{g}\rtimes_{Id}\textsf{Sp}(\mathfrak{g},\omega^+_\epsilon
	)$.
\end{remark}
Let $G$ be a simply connected Lie group. To have a left invariant flat affine connection $\nabla$ on $G$ is equivalent to having an \'etale affine representation $\rho:G\to\textsf{Aff}(V)$ (see \cite{Ko}). If $v\in V$ is a point with open orbit and discrete isotropy, then the completeness of $\nabla$ is equivalent to have that such an action being simply transitive or equivalently that $\text{Orb}(v)=V$ and the orbital map $\pi:G\to \text{Orb}(v)$ is a global diffeomorphism (see \cite{FGH} or \cite{M} for more details). L. Auslander conjectured in \cite{Aus} that a simply transitive action contains nonzero translations if $G$ is nilpotent, that is, the restriction to $\rho(G)\subset \textsf{Aff}(V)$ of the natural homomorphism $\lambda:\textsf{Aff}(V)\to \textsf{GL}(V)$, is not injective. J. Scheuneman gave a presumed proof of this conjecture in \cite{Sc} but D. Fried observed in \cite{F} that his proof had an error and exhibited a counterexample showing that the Auslander's conjecture is not true in general. Later A. Medina and Yu. Khakimdjanov in \cite{MK} exhibited more general examples on flat affine filiform Lie groups, for which Auslander's conjecture is true in odd dimension and false in even dimension.

With the aim of presenting a result related to this story, we consider bi-invariant flat affine symplectic connections. A linear connection $\nabla$ on $G$ in called \emph{bi-invariant} if both $L_\sigma: G\to G$ and $R_\sigma: G\to G$ (defined by $\tau\mapsto \tau\sigma$) are affine transformations of $(G,\nabla)$ for all $\sigma\in G$. It is well known that a Lie group admits a bi-invariant flat affine connection if and only if the Lie bracket of $\mathfrak{g}$ is underlying of an associative product on $\mathfrak{g}$ (see for instance \cite{M}). 
\begin{proposition}\label{F16}
	Let $(G,\omega^+)$ be a simply connected symplectic Lie group and assume that it admits a bi-invariant flat affine symplectic connection. Then $G$ is a nilpotent Lie group and it can be identified with a subgroup of $\mathfrak{g}\rtimes_{Id}\textsf{Sp}(\mathfrak{g},\omega^+_\epsilon
	)$ containing a nontrivial one parameter subgroup formed by central translations.
\end{proposition}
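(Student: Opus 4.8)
The plan is to descend to the flat affine symplectic Lie algebra $(\mathfrak{g},\omega,\cdot)$ associated with $(G,\omega^+,\nabla)$ as in Theorem~\ref{CharacterizationLeft}, where $\omega=\omega^+_\epsilon$ and $x\cdot y=L_x(y)=(\nabla_{x^+}y^+)(\epsilon)$. Here bi-invariance of $\nabla$ is equivalent to associativity of the product $\cdot$ (so that the Lie bracket of $\mathfrak{g}$ underlies the associative product $\cdot$; see \cite{M}), while the identity $\omega(L_x(y),z)+\omega(y,L_x(z))=0$ says exactly that $L_x\in\mathfrak{sp}(\mathfrak{g},\omega)$, whence $\text{tr}(L_x)=0$ for every $x\in\mathfrak{g}$, precisely as computed in the proof of Theorem~\ref{completeness}. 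With these two facts in hand, the whole statement follows once one shows that the finite dimensional associative algebra $A:=(\mathfrak{g},\cdot)$ is nilpotent.

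To prove that, I would first upgrade ``$\text{tr}(L_x)=0$'' to ``all power traces of $L_x$ vanish.'' Associativity gives $L_x\circ L_x=L_{x\cdot x}$, and inductively $L_x^{\,k}=L_{x^{(k)}}$ where $x^{(k)}$ is the $k$-fold $\cdot$-product of $x$ with itself; hence $\text{tr}(L_x^{\,k})=\text{tr}(L_{x^{(k)}})=0$ for all $k\geq 1$ and all $x\in\mathfrak{g}$. In characteristic zero, Newton's identities then force the characteristic polynomial of $L_x$ to be $t^{\dim\mathfrak{g}}$, so each $L_x$ is a nilpotent endomorphism of $\mathfrak{g}$; in particular $x^{(k)}=L_x^{\,k-1}(x)=0$ for $k$ large, i.e. every element of $A$ is $\cdot$-nilpotent. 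Since a finite dimensional nil associative algebra is nilpotent, $A^{N}=0$ for some $N$ (alternatively: $L_{[x,y]}=[L_x,L_y]$ shows that $\{L_x\}$ spans a Lie subalgebra of $\mathfrak{gl}(\mathfrak{g})$ of nilpotent endomorphisms, and Engel's theorem yields a full flag on which they act strictly, giving $A^{N}=0$ again). Because $[x,y]=x\cdot y-y\cdot x\in A^{2}$, an immediate induction shows the $k$-th term of the lower central series of the Lie algebra $\mathfrak{g}$ lies in $A^{k}$, hence dies at step $N$; so $\mathfrak{g}$ is nilpotent and, $G$ being connected and simply connected, $G$ is a nilpotent Lie group.

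It remains to extract the central translations and the identification. Let $k\geq 1$ be maximal with $A^{k}\neq 0$. Then $A\cdot A^{k}\subseteq A^{k+1}=0$ and $A^{k}\cdot A\subseteq A^{k+1}=0$, so any $0\neq z\in A^{k}$ satisfies $L_z=0$ and $R_z=0$, hence $\text{ad}_z=L_z-R_z=0$, i.e. $z$ is central in $\mathfrak{g}$. Under the injective étale affine representation $\rho\colon\widetilde{G}=G\to\mathfrak{g}\rtimes_{Id}\textsf{Sp}(\mathfrak{g},\omega^+_\epsilon)$ recalled before the statement (with $\theta(x)=(x,L_x)$), all terms with $j\geq 2$ in $Q(\textsf{exp}_G(tz))=\sum_{j\geq 1}\tfrac{1}{j!}(tL_z)^{j-1}(tz)$ vanish since $L_z=0$, and $F_{\textsf{exp}_G(tz)}=\textsf{Exp}(tL_z)=\text{Id}_{\mathfrak{g}}$; thus $\rho(\textsf{exp}_G(tz))=(tz,\text{Id}_{\mathfrak{g}})$ for all $t\in\mathbb{R}$, a nontrivial one parameter subgroup of $\rho(G)$ formed by translations by central vectors of $\mathfrak{g}$. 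Finally, $G$ being nilpotent is unimodular, so by Theorem~\ref{completeness} the connection $\nabla$ is geodesically complete; the associated action of $G$ on $\mathfrak{g}$ is then simply transitive, which forces $\rho$ to be injective (this is also the content of the Remark following Theorem~\ref{completeness}), so $G$ is identified with $\rho(G)\subseteq\mathfrak{g}\rtimes_{Id}\textsf{Sp}(\mathfrak{g},\omega^+_\epsilon)$, containing $\{(tz,\text{Id}_{\mathfrak{g}}):t\in\mathbb{R}\}$. The step I expect to be the main obstacle is exactly the passage from $\text{tr}(L_x)=0$ to the nilpotency of $A$ — recognizing that associativity turns ``trace zero'' into ``all power traces zero'' and then invoking that a finite dimensional nil associative algebra is nilpotent; everything else is either bookkeeping or a direct appeal to results already established in the paper.
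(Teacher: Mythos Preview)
Your proof is correct, but it takes a genuinely different route to the key step (nilpotency of the associative algebra $A=(\mathfrak{g},\cdot)$) than the paper does. The paper exploits the symplectic structure more directly: since $L_x\circ L_y=L_{x\cdot y}\in\mathfrak{sp}(\mathfrak{g},\omega)$, taking $\omega$-adjoints gives $-(L_x\circ L_y)=(L_x\circ L_y)^+=L_y^+\circ L_x^+=L_y\circ L_x$, so $L_xL_y+L_yL_x=0$ and in particular $L_x^2=0$ for every $x$. Nilpotency of $A$ then follows by the classical fact that a non-nilpotent finite dimensional associative algebra contains a nonzero idempotent $a$, which would force $0=L_a^2(a)=a$. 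Your argument instead leverages only $\mathrm{tr}(L_x)=0$ together with associativity to get $\mathrm{tr}(L_x^k)=\mathrm{tr}(L_{x^{(k)}})=0$, hence $L_x$ nilpotent by Newton's identities, and then invokes ``finite dimensional nil $\Rightarrow$ nilpotent''. The paper's trick buys a sharper intermediate statement ($L_x^2=0$, not merely $L_x$ nilpotent) with less machinery; your approach is more portable, as it would work in any setting where left multiplications are traceless and the product is associative, without needing the full symplectic adjoint identity. For the passage to nilpotency of the Lie algebra, your lower-central-series inclusion $\mathfrak{g}^{(k)}\subseteq A^k$ is more self-contained than the paper's appeal to \cite{H}. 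One minor remark: you do not need to invoke geodesic completeness to get injectivity of $\rho$; since $\theta(x)=(x,L_x)$ is already injective on the Lie algebra level and $G$ is simply connected, $\rho$ is injective, exactly as the Remark following Theorem~\ref{completeness} states.
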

\begin{proof}
	Let $\nabla$ be a bi-invariant flat affine symplectic connection on $(G,\omega^+)$. Then, the bilinear map $\cdot:\mathfrak{g}\times\mathfrak{g}\to\mathfrak{g}$ defined by $x\cdot y=L_x(y)=(\nabla_{x^+}y^+)(\epsilon)$ for all $x,y\in\mathfrak{g}$, determines an associative product over $\mathfrak{g}$ satisfying \eqref{left2} and \eqref{n12.11}. As $\cdot$ is associative, we have that $L_x\circ L_y=L_{x\cdot y}$ for all $x,y\in\mathfrak{g}$. This implies that $L_x\circ L_y\in \mathfrak{sp}(\mathfrak{g},\omega^+_\epsilon)$. If $(L_x\circ L_y)^+$ denotes adjoint map of $L_x\circ L_y$ with respect to $\omega$, then we have that $-(L_x\circ L_y)=(L_x\circ L_y)^+=L_y\circ L_x$, that is, $L_x\circ L_y+L_y\circ L_x=0$ for all $x,y\in\mathfrak{g}$. Therefore, $L_x\circ L_x=0$ for every $x\in\mathfrak{g}$. Note that if $\mathfrak{g}$ is a non-nilpotent associative algebra, then there is a nonzero element $a\in\mathfrak{g}$ which is idempotent, that is, $a\cdot a=a$ with $a\neq 0$ (see for instance \cite[p. 23]{A}). Thus
	$$0=(L_a\circ L_a)(a)=L_a(a\cdot a)=L_a(a)=a\cdot a=a\neq 0,$$
	which is a contradiction. Therefore, $(\mathfrak{g},\cdot)$ is an associative nilpotent algebra. Consequently, there is an integer $m\geq 2$ such that
	$$\mathfrak{g}^m=\lbrace x_1\cdot x_2\cdots x_m:\quad x_j\in \mathfrak{g},\quad \forall j=1,2,\cdots,m\rbrace=\lbrace 0\rbrace$$
	with $\mathfrak{g}^{m-1}\neq\lbrace 0\rbrace$. This implies that there exists $b\neq 0$ such that $L_b=R_b=0$ where $R_b(x)=x\cdot b$. As $G$ is simply connected, by Theorem \ref{CharacterizationLeft} we have that there is a Lie group homomorphism $\rho$ between $G$ and the Lie group $\mathfrak{g}\rtimes_{Id}\text{Sp}(\mathfrak{g},\omega^+_\epsilon)$
	determined by
	\begin{align*}
	\rho: G &\longrightarrow \mathfrak{g}\rtimes_{Id}\text{Sp}(\mathfrak{g},\omega^+_\epsilon)\\
	\textsf{exp}_G(x) &\longmapsto \rho(\textsf{exp}_G(x))=\left(\sum_{k=1}^\infty \dfrac{1}{k!}(L_x)^{k-1}(x),\sum_{k=0}^\infty \dfrac{1}{k!}(L_x)^k\right),
	\end{align*}
	for all $x\in \mathfrak{g}$. As $\rho$ is an injective homomorphism, we can identify $G$ with the subgroup $\rho(G)$ of $\mathfrak{g}\rtimes_{Id}\text{Sp}(\mathfrak{g},\omega^+_\epsilon)$. Therefore, as $L_b=0$, it follows that $\rho$ determines a nontrivial central one parameter subgroup $H$ of $\rho(G)$ formed by translations which is induced by $t\mapsto \textsf{exp}_G(tb)$ and given by
	$$H=\left\lbrace \rho(\textsf{exp}_G(tb))=(tb,\text{Id}_\mathfrak{g}):\ t\in\mathbb{R}\right\rbrace.$$
	Hence, as $\mathfrak{g}$ is a nilpotent algebra, the existence of such a subgroup $H$ implies that $\mathfrak{g}$ is a nilpotent Lie algebra, and therefore, $G$ is a nilpotent Lie group (see \cite{H}).
\end{proof}
\subsection{Completeness}
We will finish this section giving some comments about the completeness of a left invariant flat affine symplectic connection. It is well known that a left invariant flat affine connection $\nabla$ is geodesically complete if and only if  $\text{tr}(R_x)=0$ for all $x\in\mathfrak{g}$, where $R_x: \mathfrak{g}\to\mathfrak{g}$ is the linear map defined by $R_x(y)=(\nabla_{y^+}x^+)(\epsilon)$ for all $x,y\in\mathfrak{g}$ (see \cite{H}). Assume that $(G,\omega^+,\nabla)$ is a connected flat affine symplectic Lie group and let $(\mathfrak{g},\omega,\cdot)$ be its flat affine symplectic Lie algebra.

Similarly to how it was done in the pseudo-Riemannian case by A. Aubert and A. Medina in \cite{AuM}, we can give sufficient and necessary conditions for determining when a left invariant flat affine symplectic connection is geodesically complete. Denote by  $L_x(y)=R_y(x)=x\cdot y=(\nabla_{x^+}y^+)(\epsilon)$ for all $x,y\in\mathfrak{g}$. Thus, we have
\begin{equation}\label{n12}
\omega(L_x(y),z)+\omega(y,L_x(z))=0,\qquad x,y\in\mathfrak{g}.
\end{equation}
This implies that $L_x\in\mathfrak{sp}(\mathfrak{g},\omega)$ for all $x\in\mathfrak{g}$. Hence, the adjoint map of $L_x$ with respect to $\omega$, which we denote by $L_x^+:\mathfrak{g}\to \mathfrak{g}$, verifies that $L_x^+=-L_x$. On the other hand, if $\mathfrak{g}^\ast$ denotes the dual vector space of $\mathfrak{g}$ and $^tL_x:\mathfrak{g}^\ast\to \mathfrak{g}^\ast$ is the transpose map associated to $L_x$, then the identity \eqref{n12} implies that the linear isomorphism $\omega^\flat:\mathfrak{g}\to\mathfrak{g}^\ast$, defined by $\omega^\flat(x)=\omega(x,\cdot)$, makes the following diagram commutative for all $x\in \mathfrak{g}$
$$\xymatrix{
	\mathfrak{g} \ar[d]_{\omega^\flat}\ar[r]^{L_x^+} & \mathfrak{g} \ar[d]^{\omega^\flat}\\
	\mathfrak{g}^\ast \ar[r]_{^tL_x} & \mathfrak{g}^\ast 
}.$$
So, we have that
$$-\text{tr}(L_x)=\text{tr}(L_x^+)=\text{tr}((\omega^\flat)^{-1} \circ ^tL_x\circ \omega^\flat)=\text{tr}(^tL_x)=\text{tr}(L_x),\qquad x\in\mathfrak{g},$$
and hence $\text{tr}(L_x)=0$ for all $x\in \mathfrak{g}$.

Suppose that $\nabla$ is geodesically complete; so $\text{tr}(R_x)=0$ for all $x\in \mathfrak{g}$. As $\text{ad}_x=L_x-R_x$, we have that 
$$\text{tr}(\text{ad}_x)=\text{tr}(L_x-R_x)=\text{tr}(L_x)-\text{tr}(R_x)=0,\qquad x\in\mathfrak{g},$$
that is, $G$ is a unimodular Lie group. Conversely, if $G$ is a unimodular Lie group, then the identities $\text{ad}_x=L_x-R_x$ and $\text{tr}(L_x)=0$ imply that $\text{tr}(R_x)=\text{tr}(L_x)-\text{tr}(\text{ad}_x)=0$ for all $x\in \mathfrak{g}$. Therefore, $\nabla$ geodesically complete. 

The previous arguments are a different proof for a particular case of the following well known result:
\begin{theorem}\cite{Ba}
	Let $(G,\nabla)$ be a volume-preserving\footnote{$\nabla$ is volume-preserving if there exists a left invariant volume form $\eta$ on $G$ such that $\nabla\eta=0$.} flat affine Lie group. Then $\nabla$ is geodesically complete if and only if $G$ is unimodular. 
\end{theorem}
As every unimodular Lie group admitting a left invariant symplectic form is solvable (see for instance \cite{B}), we get:
\begin{Corollary}\cite{Ba}
	If $(G,\omega^+,\nabla)$ is a connected flat affine symplectic Lie group and $\nabla$ is geodesically complete, then $G$ is solvable.
\end{Corollary}

\section{A double extension of a flat affine symplectic Lie algebra}
By the previous section it is known that there exists a bijective correspondence between simply connected flat affine symplectic Lie groups and flat affine symplectic Lie algebras. The main aim of this section is to give a method for constructing flat affine symplectic Lie algebras using Nijenhuis' cohomology for left symmetric algebras (see \cite{N}). This will be an iterative method which allows us to get flat affine symplectic Lie algebras of dimension $2n+2$ from a flat affine symplectic Lie algebra of dimension $2n$. We will call this construction \emph{a double extension of a flat affine symplectic Lie algebra}. We prove that it is possible to obtain all those flat affine symplectic Lie groups with bi-invariant symplectic connection by means of a double extension starting from $\{0\}$. Moreover, we get all flat affine symplectic Lie groups of dimension $2$ which were found in \cite{An} and give nontrivial examples in every even dimension.

As we said before, we will take the notions about cohomology of left symmetric algebras that we will use from \cite{N}. Let $\mathfrak{g}$ be a left symmetric algebra and $V$ a vector space of finite dimension over $\mathbb{K}=\mathbb{R}$ or $\mathbb{C}$. We say that $V$ has a structure of $\mathfrak{g}$-\emph{bimodule} if there are two bilinear maps $\cdot:\mathfrak{g}\times V\to V$ and $\Box:V\times \mathfrak{g}\to V$ satisfying
\begin{eqnarray}
&  & x\cdot(y\cdot v)-y\cdot(x\cdot v)=[x,y]\cdot v\quad\text{and}\label{bimo1}\\
&  & x\cdot(v\Box y)-(x\cdot v)\Box y=v\Box (xy)-(v\Box x)\Box y, \qquad x,y\in\mathfrak{g},\quad v\in V.\label{bimo2}
\end{eqnarray}
If $C^0(\mathfrak{g},V):=V$ and $C^p(\mathfrak{g},V)$ denotes the set of $p$-linear maps $f:\mathfrak{g}\times\cdots\times\mathfrak{g}\to V$ for $p\in \mathbb{Z}^+$, the \emph{Nijenhuis' differential} $\delta_p:C^p(\mathfrak{g},V)\to C^{p+1}(\mathfrak{g},V)$ is defined as
$$(\delta_p f)(x_0,\cdots,x_p) = \sum_{i=0}^{p-1}(-1)^ix_i\cdot f(x_0,\cdots,\hat{x_i},\cdots,x_p)+\sum_{i=0}^{p-1}(-1)^if(x_0,\cdots,\hat{x_i},\cdots,x_{p-1},x_i)\Box x_p$$
$$-\sum_{i<j<p}(-1)^{i+j+1} f(x_ix_j-x_jx_i,\cdots,\hat{x_i},\cdots,\hat{x_j},\cdots,x_p)-\sum_{i=0}^{p-1}(-1)^if(x_0,\cdots,\hat{x_i},\cdots,x_{p-1},x_ix_p).$$
Using formulas \eqref{bimo1} and \eqref{bimo2} it is simple to check that $\delta_p\circ\delta_{p-1}=0$ (see \cite{N}). As is usual, the spaces of \emph{k-cocycles} (respectively \emph{k-coboundaries}) of the left symmetric algebra $\mathfrak{g}$ with values in $V$ is defined by $Z_{SG}^k(\mathfrak{g},V):=\text{Ker}(\delta_k)$ (respectively $B_{SG}^k(\mathfrak{g},V):=\text{Im}(\delta_{k-1})$ and $B_{SG}^0(\mathfrak{g},V):=\{ 0\}$). Therefore, the $k$-th space of cohomology of the left symmetric algebra $\mathfrak{g}$ with values in $V$ is defined as the quotient space
$$H_{SG}^k(\mathfrak{g},V):=Z_{SG}^k(\mathfrak{g},V)/B_{SG}^k(\mathfrak{g},V).$$
When $V=\mathbb{K}$ has structure of trivial $\mathfrak{g}$-bimodule (i.e. $x\cdot t=t\Box x=0$) we call to $H_{SG}^k(\mathfrak{g},\mathbb{K})$ the $k$-th \emph{space of scalar cohomology} of the left symmetric algebra $\mathfrak{g}$.

If $(\mathfrak{g},\omega,\cdot)$ is a flat affine symplectic Lie algebra and we denote by $L_x(y)=x\cdot y=xy$ for all $x,y\in\mathfrak{g}$, then the map $L:\mathfrak{g} \to \mathfrak{sp}(\mathfrak{g},\omega)$, defined by $x   \mapsto L_x$, is a  well defined linear representation of $\mathfrak{g}$ by $\mathfrak{g}$. That is, the map $L$ is a Lie algebra homomorphism satisfying
\begin{equation}\label{n12.1}
\omega(L_x(y),z)+\omega(y,L_x(z))=0,\qquad x,y,z\in\mathfrak{g}.
\end{equation}
We will denote by $H_L^1(\mathfrak{g},\mathfrak{g})$ the first space of cohomology of the Lie algebra $\mathfrak{g}$ with values in $\mathfrak{g}$ with respect to the representation $L$.
\begin{lemma}\label{F12}
	Let $(\mathfrak{g}, \omega, \cdot)$ be a flat affine symplectic Lie algebra. Then the formula
	$$f(x,y)=\omega(u(x),y),\qquad x,y\in \mathfrak{g},$$
	induces a lineal isomorphism between $H_{SG}^2(\mathfrak{g},\mathbb{K})$ and $H^1_L(\mathfrak{g},\mathfrak{g})$, where $\mathbb{K}$ is seen as a trivial $\mathfrak{g}$-bimodule and $\mathfrak{g}$ has a structure of $\mathfrak{g}$-module determined by the linear representation $L$.
\end{lemma}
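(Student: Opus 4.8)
The plan is to realize the stated correspondence already at the cochain level and verify that it carries cocycles to cocycles and coboundaries to coboundaries, so that it descends to an isomorphism of quotients. Since $\omega$ is nondegenerate, the assignment $u\mapsto f$ with $f(x,y)=\omega(u(x),y)$ is a linear isomorphism from $C^1(\mathfrak{g},\mathfrak{g})=\mathrm{End}(\mathfrak{g})$ onto $C^2_{SG}(\mathfrak{g},\mathbb{K})$, the space of all bilinear forms on $\mathfrak{g}$. First I would compute the relevant differentials in low degree. Specializing the Nijenhuis differential to $V=\mathbb{K}$ with trivial bimodule structure, one gets $(\delta_1 g)(x,y)=-g(x\cdot y)$ for $g\in\mathfrak{g}^*$, and $(\delta_2 f)(x,y,z)=f(x,y\cdot z)-f(y,x\cdot z)-f([x,y],z)$; thus $f$ is a scalar $2$-cocycle precisely when $f(x,y\cdot z)-f(y,x\cdot z)=f([x,y],z)$ for all $x,y,z$, and a scalar $2$-coboundary precisely when $f(x,y)=-g(x\cdot y)$ for some $g\in\mathfrak{g}^*$. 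On the Lie side, $u\in\mathrm{End}(\mathfrak{g})$ is a $1$-cocycle for the representation $L$ iff $u([x,y])=L_x(u(y))-L_y(u(x))$, while the $1$-coboundaries are exactly the maps $x\mapsto L_x(v)$ with $v\in\mathfrak{g}$.

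Next I would transfer cocycles. Using the defining identity $\omega(L_x(a),b)+\omega(a,L_x(b))=0$ of a flat affine symplectic Lie algebra (this is \eqref{n12.1}), for $f(x,y)=\omega(u(x),y)$ one has $f(x,y\cdot z)=\omega(u(x),L_y(z))=-\omega(L_y(u(x)),z)$ and likewise $f(y,x\cdot z)=-\omega(L_x(u(y)),z)$; hence the scalar $2$-cocycle identity becomes $\omega\bigl(L_x(u(y))-L_y(u(x))-u([x,y]),z\bigr)=0$ for all $z$, which by nondegeneracy of $\omega$ is exactly the Lie $1$-cocycle identity for $u$. So $u\mapsto f$ restricts to a linear bijection $Z^1_L(\mathfrak{g},\mathfrak{g})\to Z^2_{SG}(\mathfrak{g},\mathbb{K})$.

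Then I would transfer coboundaries. If $u$ is a $1$-coboundary, say $u(x)=L_x(v)$, then $f(x,y)=\omega(L_x(v),y)=-\omega(v,x\cdot y)=-g(x\cdot y)=(\delta_1 g)(x,y)$ with $g:=\omega(v,\cdot)\in\mathfrak{g}^*$, so $f$ is a scalar $2$-coboundary. Conversely, if $f=\delta_1 g$, then $f(x,y)=-g(x\cdot y)$; writing $g=\omega(v,\cdot)$ (possible since $\omega$ is nondegenerate) gives $f(x,y)=-\omega(v,x\cdot y)=\omega(L_x(v),y)$, whence $u(x)=L_x(v)$, a Lie $1$-coboundary. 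Thus $u\mapsto f$ also restricts to a linear bijection $B^1_L(\mathfrak{g},\mathfrak{g})\to B^2_{SG}(\mathfrak{g},\mathbb{K})$, and consequently it induces a linear isomorphism $H^1_L(\mathfrak{g},\mathfrak{g})\cong H^2_{SG}(\mathfrak{g},\mathbb{K})$, as claimed.

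There is no real obstacle here beyond careful bookkeeping: one must write out the Nijenhuis differential $\delta_1,\delta_2$ in the trivial-bimodule case and keep track of signs, and one must remember that $C^1(\mathfrak{g},\mathfrak{g})$ is the \emph{full} space of linear endomorphisms, so that nondegeneracy of $\omega$ makes $u\mapsto f$ onto all bilinear forms. The single structural input that makes everything mesh is that each $L_x$ lies in $\mathfrak{sp}(\mathfrak{g},\omega)$, i.e. \eqref{n12.1}: this is precisely what converts the right multiplications $y\cdot z$ appearing in $\delta_2$ into the left actions $L_y$ applied to $u(x)$ appearing in the Lie differential.
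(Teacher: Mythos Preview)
Your proof is correct and follows essentially the same route as the paper: both verify, using the identity $\omega(L_x(a),b)+\omega(a,L_x(b))=0$ and the nondegeneracy of $\omega$, that the assignment $u\mapsto f$ carries $Z^1_L(\mathfrak{g},\mathfrak{g})$ bijectively onto $Z^2_{SG}(\mathfrak{g},\mathbb{K})$ and $B^1_L(\mathfrak{g},\mathfrak{g})$ bijectively onto $B^2_{SG}(\mathfrak{g},\mathbb{K})$. The only cosmetic difference is that you first write out $\delta_1$ and $\delta_2$ explicitly in the trivial-bimodule case and track the sign in $(\delta_1 g)(x,y)=-g(x\cdot y)$, whereas the paper simply states that $f\in B^2_{SG}$ means $f(x,y)=\varphi(xy)$ and absorbs the sign into the choice of base point; the substance is identical.
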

\begin{proof}
	It is enough to show that $f$ is a scalar 2-cocycle (respectively scalar 2-coboundary) of left symmetric Lie algebra $\mathfrak{g}$ if and only if $u$ is a $1$-cocycle (respectively $1$-coboundary) of Lie algebra $\mathfrak{g}$. On the one hand, suppose that $f\in Z_{SG}^2(\mathfrak{g},\mathbb{K})$, then we have that $f(xy-yx,z)=f(x,yz)-f(y,xz)$ for all $x,y,z\in\mathfrak{g}$. Therefore
	\begin{eqnarray*}
		\omega(u([x,y]),z) & = & f([x,y],z)\\
		& = &  f(xy-yx,z)\\
		& = & f(x,yz)-f(y,xz)\\
		& = & \omega(u(x),L_y(z))-\omega(u(y),L_x(z))\\
		& = & -\omega(L_y(u(x)),z)+\omega(L_x(u(y)),z)\\
		& = & \omega(L_x(u(y))-L_y(u(x)),z).
	\end{eqnarray*}
	As $\omega$ is nondegenerate, we get that $u([x,y])=L_x(u(y))-L_y(u(x))$ for all $x,y\in\mathfrak{g}$. If $u\in Z^1_L(\mathfrak{g},\mathfrak{g})$, then the previous argument also shows that $f\in Z_{SG}^2(\mathfrak{g},\mathbb{K})$. On the other hand, 
	if $f\in B_{SG}^2(\mathfrak{g},\mathbb{K})$, then there exists a linear map $\varphi:\mathfrak{g}\to \mathbb{K}$ such that $f(x,y)=\varphi(xy)$. As $\omega$ is nondegenerate, there is a unique $x_0\in\mathfrak{g}$ such that $\varphi=\omega(x_0,\cdot)$. Thus
	$$\omega(u(x),y)=f(x,y)=\varphi(xy)=\omega(x_0,\cdot)(xy)=\omega(x_0,xy)=-\omega(L_x(x_0),y).$$
	Therefore, $u(x)=L_x(-x_0)$ for all $x\in\mathfrak{g}$. Now, let $u\in B^1_L(\mathfrak{g},\mathfrak{g})$ and choose $z_0\in\mathfrak{g}$ such that $u(x)=L_x(z_0)$. It follows that $-\omega(z_0,\cdot):\mathfrak{g}\to\mathbb{K}$ is a linear map such that $f(x,y)=-\omega(z_0,\cdot)(xy)$ for all $x,y\in\mathfrak{g}$, that is, $f\in B_{SG}^2(\mathfrak{g},\mathbb{K})$.
\end{proof}
If $(\mathfrak{g},\omega,\cdot)$ is a flat affine symplectic Lie algebra, then by the Lemma \ref{F12} it is simple to check that the map $\theta:H_{L}^1(\mathfrak{g},\mathfrak{g})\to H^1_{ad^*}(\mathfrak{g},\mathfrak{g}^*)$ defined by $[u]\mapsto [\theta(u)]$, where
$$(\theta(u)(x))(y)=\omega(u(x),y)-\omega(u(y),x)=\omega((u+u^*)(x),y),\qquad x,y\in\mathfrak{g},$$
is a well defined linear homomorphism. Here $u^*:\mathfrak{g}\to\mathfrak{g}$ denotes the adjoint map associated to $u$ with respect to $\omega$ and $\text{ad}^\ast:\mathfrak{g}\to\mathfrak{gl}(\mathfrak{g}^\ast)$ the coadjoint representation of $\mathfrak{g}$.

Similarly to how it was done in \cite{AuM} for the pseudo-Riemannian case, we may state and prove the following Lemma for the case we are interested with:
\begin{lemma}\label{F13}
	Suppose that $(\mathfrak{g},\omega,\cdot)$ is a flat affine symplectic Lie algebra. Let $I$ be a bilateral ideal of $(\mathfrak{g},\cdot)$ of dimension $1$. Then
	\begin{enumerate}
		\item The product $\cdot$ in $I$ is null, $I\cdot  I^{\perp_\omega}=0$, and $ I^{\perp_\omega}$ is a left ideal.
		\item $ I^{\perp_\omega}$ is a right ideal if and only if $ I^{\perp_\omega}\cdot I=0$.
		\item If $ I^{\perp_\omega}$ is a bilateral ideal of $(\mathfrak{g},\cdot)$, then the canonical sequences
		\begin{equation}\label{secu1}
		0\longrightarrow I\hookrightarrow I^{\perp_\omega}\longrightarrow I^{\perp_\omega}/ I=B\longrightarrow 0
		\end{equation}
		\begin{equation}\label{secu2}
		0\longrightarrow I^{\perp_\omega}\hookrightarrow \mathfrak{g}\longrightarrow \mathfrak{g}/ I^{\perp_\omega}\longrightarrow 0
		\end{equation}
		\begin{equation}\label{secu3}
		0\longrightarrow I\hookrightarrow \mathfrak{g}\longrightarrow \mathfrak{g}/ I\longrightarrow 0
		\end{equation}
		\begin{equation}\label{secu4}
		0\longrightarrow I^{\perp_\omega}/I\hookrightarrow \mathfrak{g}/I\longrightarrow \mathfrak{g}/ I^{\perp_\omega}\longrightarrow 0,
		\end{equation}
		are sequences of left symmetric algebras. Moreover, the quotient Lie algebra $B=I^{\perp_\omega}/I$ admits a canonical structure of flat affine symplectic algebra and \eqref{secu2}, \eqref{secu4} are split exact sequences of Lie algebras.
	\end{enumerate}
\end{lemma}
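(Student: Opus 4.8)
The plan is to fix a generator $e$ of the one-dimensional ideal $I=\mathbb{K}e$ and to exploit two facts repeatedly: first, that $I$ is automatically $\omega$-isotropic, $\omega(e,e)=0$, so $I\subseteq I^{\perp_\omega}$; and second, the infinitesimal symplecticity of left multiplications, $\omega(x\cdot y,z)=-\omega(y,x\cdot z)$ for all $x,y,z\in\mathfrak{g}$, which lets me trade a multiplication on one slot of $\omega$ for a multiplication on the other. For item (1), to see that $I^{\perp_\omega}$ is a left ideal I take $y\in I^{\perp_\omega}$, $x\in\mathfrak{g}$ and compute $\omega(x\cdot y,e)=-\omega(y,x\cdot e)$; since $I$ is two-sided, $x\cdot e\in I=\mathbb{K}e$, so the right-hand side is a multiple of $\omega(y,e)=0$, whence $x\cdot y\in I^{\perp_\omega}$. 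To get $I\cdot I^{\perp_\omega}=0$ I choose, using nondegeneracy of $\omega$, an element $z\in\mathfrak{g}$ with $\omega(e,z)=1$ (hence $z\notin I^{\perp_\omega}$); for $y\in I^{\perp_\omega}$ one has $e\cdot y\in I$, say $e\cdot y=c(y)\,e$, and then $c(y)=\omega(e\cdot y,z)=-\omega(y,e\cdot z)=0$ because $e\cdot z\in I$ and $y\in I^{\perp_\omega}$. Thus $e\cdot y=0$; in particular, since $e\in I^{\perp_\omega}$, we get $e\cdot e=0$, i.e. the product on $I$ is null.

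For item (2), the implication ``$I^{\perp_\omega}\cdot I=0$ $\Rightarrow$ $I^{\perp_\omega}$ a right ideal'' is immediate: for $y\in I^{\perp_\omega}$, $x\in\mathfrak{g}$ we have $\omega(y\cdot x,e)=-\omega(x,y\cdot e)=0$ since $y\cdot e\in I^{\perp_\omega}\cdot I=0$. For the converse I reuse the transversal vector $z$ with $\omega(e,z)=1$: if $I^{\perp_\omega}$ is a right ideal, then for $y\in I^{\perp_\omega}$ one has $y\cdot e=b(y)\,e\in I$ and $y\cdot z\in I^{\perp_\omega}$, whence $b(y)=\omega(y\cdot e,z)=-\omega(e,y\cdot z)=0$, so $I^{\perp_\omega}\cdot I=0$.

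For item (3) I first observe that, by (1), $I$ is a two-sided ideal of $(I^{\perp_\omega},\cdot)$ (indeed of all of $\mathfrak{g}$), and that $I^{\perp_\omega}/I$ is an ideal of $\mathfrak{g}/I$ because $I^{\perp_\omega}$ is an ideal of $\mathfrak{g}$ containing $I$; hence the inclusions and projections in \eqref{secu1}--\eqref{secu4} are morphisms of left symmetric algebras. To equip $B=I^{\perp_\omega}/I$ with a flat affine symplectic structure I restrict $\omega$ to $I^{\perp_\omega}$: its radical is $I^{\perp_\omega}\cap(I^{\perp_\omega})^{\perp_\omega}=I^{\perp_\omega}\cap I=I$, because $\omega$ is nondegenerate (so $(I^{\perp_\omega})^{\perp_\omega}=I$) and $I$ is isotropic, so $\omega$ descends to a nondegenerate skew bilinear form $\bar\omega$ on $B$. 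Since $\omega$ is a scalar $2$-cocycle on $\mathfrak{g}$ and (under the present hypothesis) $I^{\perp_\omega}$ is a Lie subalgebra, $\bar\omega$ is a scalar $2$-cocycle of $B$; the induced product on $B$ is left symmetric with commutator the Lie bracket of $B$, and the compatibility $\bar\omega(\bar x\cdot\bar y,\bar z)+\bar\omega(\bar y,\bar x\cdot\bar z)=0$ is inherited verbatim from the identity $\omega(x\cdot y,z)+\omega(y,x\cdot z)=0$ in $\mathfrak{g}$. Finally, \eqref{secu2} and \eqref{secu4} split as Lie algebras for the simple reason that $\mathfrak{g}/I^{\perp_\omega}$ is one-dimensional (as $I$ is one-dimensional and $\omega$ nondegenerate): picking any $d\in\mathfrak{g}\setminus I^{\perp_\omega}$, the line $\mathbb{K}d$ is a (necessarily abelian) Lie subalgebra mapping isomorphically onto $\mathfrak{g}/I^{\perp_\omega}$, and the class of $d$ modulo $I$ does the same job for \eqref{secu4}.

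The routine part consists of the repeated applications of $\omega(x\cdot y,z)=-\omega(y,x\cdot z)$ together with the fact that $I$ and $I^{\perp_\omega}$ absorb products. The only steps requiring a little care are the forward direction of (2), where pairing against $\omega(\cdot,e)$ alone is not enough and one genuinely needs a transversal vector $z$ with $\omega(e,z)=1$, and the radical computation $(I^{\perp_\omega})^{\perp_\omega}=I$ that underlies the descent of $\omega$ to $B$. Beyond these I expect no obstacle; the argument runs parallel to the pseudo-Riemannian version of Aubert and Medina in \cite{AuM}.
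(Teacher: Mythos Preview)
Your proof is correct and follows essentially the same route as the paper: both arguments hinge on the symplecticity identity $\omega(x\cdot y,z)=-\omega(y,x\cdot z)$ combined with the bilateral ideal property of $I$, and both obtain the flat affine symplectic structure on $B$ by descending $\omega|_{I^{\perp_\omega}}$ through its radical $I$. The one cosmetic difference is that where you fix a transversal $z$ with $\omega(e,z)=1$ and pair against it (for $I\cdot I^{\perp_\omega}=0$ and for the forward direction of (2)), the paper simply pairs against \emph{all} $z\in\mathfrak{g}$ and invokes nondegeneracy directly---e.g.\ for (2) it writes $\omega(y\cdot z,x)=0\Leftrightarrow\omega(z,y\cdot x)=0$ for all $z$, getting both directions at once---so the transversal is not ``genuinely needed'' as you suggest, but your version is of course equally valid.
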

\begin{proof}
	Proceeding in order we have: 
	\begin{enumerate}
		\item For $x\in I$, $y\in I^{\perp_\omega}$ and $z\in \mathfrak{g}$, the identity \eqref{n12.1} and the fact that $I$ is 1-dimensional (hence $I\subset I^{\perp_\omega}$) imply that
		$$\omega(x\cdot y,z)=-\omega(y,x\cdot z)=0.$$
		Thus $I\cdot I^{\perp_\omega}=0$. In particular, $I\cdot I=0$ and as
		$$\omega(z\cdot y,x)=-\omega(y,z\cdot x)=0,$$
		we have that $\mathfrak{g}\cdot I^{\perp_\omega}\subset I^{\perp_\omega}$.
		\item For $x\in I$, $y\in I^{\perp_\omega}$ and $z\in \mathfrak{g}$, we get 
		$$\omega(y\cdot z,x)=0 \Leftrightarrow \omega(z,y\cdot x)=0, $$
		this is, $I^{\perp_\omega}\cdot \mathfrak{g}\subset  I^{\perp_\omega}$ if and only if $I^{\perp_\omega}\cdot I=0$. 
		\item Suppose that $I^{\perp_\omega}$ is a bilateral ideal of $(\mathfrak{g},\cdot)$. In this case, the quotient vector space $B=I^{\perp_\omega}/I$ has a structure of Lie algebra given by
		$$[x+I,y+I]=[x,y]+I=(x\cdot y-y\cdot x)+I,\qquad x,y\in I^{\perp_\omega}.$$
		As $\omega$ is a nondegenerate scalar 2-cocycle, it induces on $I^{\perp_\omega}$ a bilinear form of radical $I$. Hence, we have that $\left.\omega\right\vert_{I^{\perp_\omega}\times I^{\perp_\omega}}$ defines, passing to the quotient, a nondegenerate and skew-symmetric bilinear form $\omega'$ on $B=I^{\perp_\omega}/I$ which is also a scalar 2-cocycle of Lie algebra $B$. This symplectic form is given by $\omega'(x+I,y+I)=\omega(x,y)$ for all $x,y\in I^{\perp_\omega}$. If we denote the class of $x\in I^{\perp_\omega}$ module $I$ by $\overline{x}=x+I$, then the left symmetric product
		$$\overline{x}\cdot\overline{y}=(x+I)\cdot (y+I)=x\cdot y+I=\overline{x\cdot y},$$
		satisfies
		$$\omega'(\overline{x}\cdot\overline{y},\overline{z})+\omega'(\overline{y},\overline{x}\cdot\overline{z})=\omega'(\overline{x\cdot y},\overline{z})+\omega'(\overline{y},\overline{x\cdot z})=\omega(x\cdot y,z)+\omega(y,x\cdot z)=0,$$
		for all $x,y,z\in I^{\perp_\omega}$. Therefore, $(B,\omega',\overline{\cdot})$ is a flat affine symplectic Lie algebra. Finally, as $\mathfrak{g}/I^{\perp_\omega}$ has dimension $1$, we have that \eqref{secu2} and \eqref{secu4} are split exact sequences of Lie algebras.
	\end{enumerate}
\end{proof}

The Lie algebra $B$ of Lemma \ref{F13} will be called \emph{flat affine symplectic Lie algebra deduced from} $I^{\perp_\omega}$ \emph{by means of} $I$. Let $(\mathfrak{g},\omega,\cdot)$ be a flat affine symplectic Lie algebra.
\begin{Assumption}
	Assume that both $I$ and $I^{\perp_\omega}$ are bilateral ideals of $(\mathfrak{g},\cdot)$ with $I$ of dimension $1$.
\end{Assumption}
If we put $I=\mathbb{K}e$, let $\mathbb{K}d$ be a 1-dimensional subspace of $\mathfrak{g}$ such that $\omega(e,d)=1$.
We denote by $\overline{B}=(\text{Vect}_{\mathbb{K}}\lbrace e,d\rbrace)^{\perp_\omega}$. Then $\mathfrak{g}$ is identified with the vector space $\mathbb{K}e\oplus \overline{B}\oplus \mathbb{K}d$ and $I^{\perp_\omega}$ with  $\mathbb{K}e\oplus \overline{B}$. The product on $I^{\perp_\omega}$ can be written as
\begin{equation*}
(\lambda e+x)(\mu e+y)=f(x,y)e+x\odot y,\qquad x,y\in \overline{B},\quad\lambda,\mu\in\mathbb{K},
\end{equation*}
where $x\odot y$ is the component of $x\cdot y$ over $\overline{B}$. A direct calculation allows us to verify that the product in $I^{\perp_\omega}$ is left symmetric if and only if the bilinear map $\odot:\overline{B}\times \overline{B}\to\overline{B}$ is a left symmetric product on $\overline{B}$ and $f$ is a scalar 2-cocycle for the left symmetric algebra $(B,\odot)$. On the other hand, it is easy to see that the canonical linear map $\theta:\overline{B}\to B=I^{\perp_\omega}/I$ defined by $x\mapsto \overline{x}=(x+I)$, is an isomorphism of left symmetric algebras (consider $\lambda=\mu=0$ in the previous equation). Therefore, we can identify $\mathfrak{g}$ with the vector space $\mathbb{K}e\oplus B\oplus \mathbb{K}d$ and $I^{\perp_\omega}$ with  $\mathbb{K}e\oplus B$. Denote the left symmetric product of $B$ by $L_x(y)=xy$ for all $x,y\in B$.

According with the cohomology of left symmetric algebras due to A. Nijenhuis, the sequence \eqref{secu1} is described by the cohomology class of a scalar 2-cocycle $f\in Z_{SG}^2(B,\mathbb{K})$ of the left symmetric algebra $B$ (see \cite{N}). Thus, by Lemma \ref{F12}, this sequence is described by the cohomology class of a 1-cocycle $u\in Z_{L}^1(B,B)$ of Lie algebra $B$ with values in $B$ with respect to the linear representation $L:B\to\mathfrak{sp}(B,\omega')$ satisfying $f(x,y)=\omega'(u(x),y)$ for all $x,y\in B$. Thus, the product on $I^{\perp_\omega}$ can be written as
$$(\lambda e+x)(\mu e+y)=f(x,y)e+xy=\omega'(u(x),y)e+xy,\qquad x,y\in B,\quad\lambda,\mu\in\mathbb{K}.$$
As both $I=\mathbb{K}e$ and $I^{\perp_\omega}=\mathbb{K}e\oplus B$ are bilateral ideals of $(\mathfrak{g},\cdot)$, then they are also Lie algebra ideals of $\mathfrak{g}$. Therefore, the Lie bracket of $\mathfrak{g}$ can be expressed  as follows
\begin{eqnarray*}
	&  & [d,e]=\mu e\nonumber\\
	&  & [d,x]=\alpha(x)e+D(x)\\
	&  & [x,y]=\omega'((u+u^*)(x),y)e+[x,y]_B\nonumber,
\end{eqnarray*}
where $x,y\in B$, $\mu\in \mathbb{K}$, $D\in\mathfrak{gl}(B)$, and $\alpha\in B^*$. The product on $\mathfrak{g}=\mathbb{K}e\oplus B\oplus \mathbb{K}d$  satisfies the identity \eqref{n12.1} with $\omega(e,d)=1$ and $\text{Vect}_\mathbb{K}\lbrace e,d\rbrace\perp_\omega B$, if and only if
\begin{eqnarray}\label{productdoble1}
&  & e\cdot x=x\cdot e=e\cdot e=0\nonumber\\
&  & x\cdot y=\omega'(u(x),y)e+xy\nonumber\\
&  & d\cdot x=\omega'(x_0,x)e+p(x)\nonumber\\
&  & x\cdot d=\varphi(x)e+u(x)\\
&  & d\cdot e=\lambda e\nonumber\\
&  & e\cdot d=\gamma e\nonumber\\
&  & d\cdot d=\beta e+x_0-\lambda d\nonumber,
\end{eqnarray}
where $\lambda,\gamma,\beta\in\mathbb{K}$, $x_0\in B$, $p\in\mathfrak{sp}(B,\omega')$, $\varphi\in B^*$ and $u\in Z_{L}^1(B,B)$. The commutator of the product \eqref{productdoble1} agrees with the Lie algebra structure of $\mathfrak{g}$ if and only if, $\gamma=\lambda-\mu$, $\varphi=\omega'(x_0,\cdot)-\alpha$ and $p=D+u$. As $\omega'$ is nondegenerate, there is a unique element $z_0\in B$ such that $\alpha=\omega'(z_0,\cdot)$. Therefore, the Lie bracket of $\mathfrak{g}$ is given by
\begin{eqnarray}\label{bracketdoble2}
&  & [d,e]=\mu e\nonumber\\
&  & [d,x]=\omega'(z_0,x)e+D(x)\\
&  & [x,y]=\omega'((u+u^*)(x),y)e+[x,y]_B\nonumber,\qquad x,y\in B
\end{eqnarray}
and the product \eqref{productdoble1} by
\begin{eqnarray}\label{productdoble2}
&  & e\cdot x=x\cdot e=e\cdot e=0\nonumber\\
&  & x\cdot y=\omega'(u(x),y)e+xy\nonumber\\
&  & d\cdot x=\omega'(x_0,x)e+(D+u)(x)\nonumber\\
&  & x\cdot d=\omega'(x_0-z_0,x)e+u(x)\\
&  & d\cdot e=\lambda e\nonumber\\
&  & e\cdot d=(\lambda-\mu)e\nonumber\\
&  & d\cdot d=\beta e+x_0-\lambda d\nonumber,
\end{eqnarray}
where $\lambda,\mu,\beta\in\mathbb{K}$, $x_0,z_0\in B$, $D\in\mathfrak{gl}(B)$  and $u\in Z_{L}^1(B,B)$ such that $D+u\in\mathfrak{sp}(B,\omega')$.

Finally, the product \eqref{productdoble2} is left symmetric if and only if for all $x,y\in B$ we have that
\begin{enumerate}
	\item $(e\cdot d)\cdot e-(d\cdot e)\cdot e=e\cdot(d\cdot e)-d\cdot(e\cdot e)$. This equality holds if and only if
	\begin{equation}\label{doble1}
	\lambda=\mu\qquad\text{or}\qquad\lambda=\dfrac{\mu}{2}.
	\end{equation}
	\item $(d\cdot x)\cdot d-(x\cdot d)\cdot d=d\cdot(x\cdot d)-x\cdot(d\cdot d)$. This identity is true if and only if
	\begin{align}\label{doble2}
	&[u,D]_{\mathfrak{gl}(B)}=u^2+\lambda u-R_{x_0}\qquad \text{and}\\\label{doble3}
	&D^\ast(x_0-z_0)-2u^\ast(x_0)-2\lambda(x_0-z_0)+(\lambda-\mu)z_0=0,
	\end{align}
	where $D^\ast:\mathfrak{g}\to\mathfrak{g}$ and $u^\ast:\mathfrak{g}\to\mathfrak{g}$ denote the adjoint maps with respect to $\omega'$ associated to $D$ and $u$, respectively.
	\item $(d\cdot x)\cdot y-(x\cdot d)\cdot y=d\cdot(x\cdot y)-x\cdot(d\cdot y)$. This equality is satisfied if and only if
	\begin{align}\label{doble4}
	&\omega'(x_0,xy)=\omega'((u\circ D)(x)-(D\circ u)(x)-u^2(x)-\lambda u(x),y),\quad\text{and}\\\label{doble5}
	&D(x)y+xD(y)-D(xy)=u(xy)-xu(y),\qquad x,y\in B,
	\end{align}
	As $\omega'$ is nondegenerate, it is clear that \eqref{doble4} holds if and only if $[u,D]_{\mathfrak{gl}(B)}=u^2+\lambda u-R_{x_0}$.
	\item $(x\cdot y)\cdot d-(y\cdot x)\cdot d=x\cdot(y\cdot d)-y\cdot(x\cdot d)$. This identity is true if and only if
	\begin{equation}\label{doble6}
	(\lambda-\mu)(u+u^\ast)(x)-2(u\circ u^\ast)(x)=(L_x+R_x^\ast)(x_0-z_0),
	\end{equation}
	where $xy=L_x(y)$ for all $x,y\in B$ and $R_x^\ast:\mathfrak{g}\to\mathfrak{g}$ is the adjoint map with respect to $\omega'$ associated to the linear map $R_x:\mathfrak{g}\to\mathfrak{g}$ given by $y\mapsto R_x(y)=yx$.
\end{enumerate}
Consider the bilinear form $\omega_{u,p}:B\times B\to \mathbb{R}$ defined by
$$\omega_{u,p}:(x,y)\mapsto \omega'((u\circ D)(x)-(D\circ u)(x)-u^2(x)-\lambda u(x),y),\qquad x,y\in B.$$
If we take $\varphi=\omega'(x_0,\cdot)$, then it is easy to see that the formula \eqref{doble4} means that $\omega_{u,p}$ is a scalar 2-coboundary for the left symmetric algebra $B$. On the other hand, the formula \eqref{doble5} tells us that the bilinear map  $\beta_{u,x_0}:B\times B\to B$ defined by $\beta_{u,x_0}(x,y)=u(xy)-xu(y)$ for all $x,y\in\mathfrak{g}$, is the differential of $D$ in the Nijenhuis' cohomology of left symmetric algebra $B$ with values in the canonical $B$-bimodule $B$ (see \cite{N}). Moreover, using again the formula \eqref{doble5} and the fact that $u\in Z_{L}^1(B,B)$ we have that $D:B\to B$ is a Lie algebra derivation. Indeed,
\begin{eqnarray*}
	D([x,y]) & = & D(xy-yx)\\
	& = & D(x)y+xD(y)-u(xy)+xu(y)-D(y)x-yD(x)+u(yx)-yu(x)\\
	& = & [D(x),y]+[x,D(y)]-u([x,y])+L_x(u(y))-L_y(u(x))\\
	& = & [D(x),y]+[x,D(y)],
\end{eqnarray*}

Summing up, we have the following results:
\begin{proposition}\label{F14}
	Let $(\mathfrak{g},\omega,\cdot)$ be a flat affine symplectic Lie algebra. Assume that $I$ and $I^{\perp_\omega}$ are bilateral ideals of $(\mathfrak{g},\cdot)$ with $I$ of dimension $1$. If $B=I^{\perp_\omega}/I$ denotes the flat affine symplectic Lie algebra deduced from $I^{\perp_\omega}$ by means of $I$, then the left symmetric product of $\mathfrak{g}$ is given by \eqref{productdoble2} where $\lambda,\mu,\beta\in\mathbb{K}$,\ $x_0,z_0\in B$,\  $D\in\mathfrak{gl}(B)$ and $u\in Z_{L}^1(B,B)$ such that $D+u\in\mathfrak{sp}(B,\omega')$ satisfying the relations \eqref{doble1}, \eqref{doble3}, $\omega_{u,p}\in B_{SG}^2(B,\mathbb{K})$, \eqref{doble5}, and the identity \eqref{doble6}.
\end{proposition}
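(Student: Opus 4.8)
The statement collects the normal-form computation carried out in the paragraphs just before it, so my plan is to run that computation in order and then observe that the resulting constraints are exactly the ones listed. I would first set up the scaffolding: write $I=\mathbb{K}e$, choose $d$ with $\omega(e,d)=1$, and put $\overline{B}=(\text{Vect}_{\mathbb{K}}\{e,d\})^{\perp_\omega}$, so that as vector spaces $\mathfrak{g}=\mathbb{K}e\oplus\overline{B}\oplus\mathbb{K}d$ and $I^{\perp_\omega}=\mathbb{K}e\oplus\overline{B}$. Since $I^{\perp_\omega}$ is a left symmetric subalgebra of $\mathfrak{g}$, restriction of the product shows that $\odot$ is a left symmetric product on $\overline{B}$ and that the associated form $f$ is a scalar $2$-cocycle; Lemma \ref{F13} then gives that $(B,\omega',\overline{\cdot})$ with $B=I^{\perp_\omega}/I$ is a flat affine symplectic Lie algebra and that $\overline{B}\to B$ is an isomorphism of left symmetric algebras, so I may identify $\overline{B}$ with $B$ and work on $\mathbb{K}e\oplus B\oplus\mathbb{K}d$. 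By Lemma \ref{F12} the cohomology class describing the sequence \eqref{secu1} is represented by some $u\in Z_L^1(B,B)$ with $f(x,y)=\omega'(u(x),y)$.

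Next I would write down the most general bilinear product $\cdot$ on $\mathfrak{g}$ for which $I$ and $I^{\perp_\omega}$ remain bilateral ideals, and impose the isotropy identity \eqref{n12.1} together with $\omega(e,d)=1$ and $\text{Vect}_{\mathbb{K}}\{e,d\}\perp_\omega B$. Pairing each product component against a suitable third vector and using nondegeneracy of $\omega$ forces $e\cdot x=x\cdot e=e\cdot e=0$, $x\cdot y=\omega'(u(x),y)e+xy$, $d\cdot x=\omega'(x_0,x)e+p(x)$ with $p\in\mathfrak{sp}(B,\omega')$, $x\cdot d=\varphi(x)e+u(x)$, $d\cdot e=\lambda e$, $e\cdot d=\gamma e$ and $d\cdot d=\beta e+x_0-\lambda d$. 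I would then impose that the commutator of this product agree with the Lie bracket of $\mathfrak{g}$; writing $[d,e]=\mu e$, $[d,x]=\alpha(x)e+D(x)$ and $[x,y]=\omega'((u+u^*)(x),y)e+[x,y]_B$, this yields $\gamma=\lambda-\mu$, $\varphi=\omega'(x_0,\cdot)-\alpha$, $p=D+u$, and — $\omega'$ being nondegenerate — $\alpha=\omega'(z_0,\cdot)$ for a unique $z_0\in B$. This produces exactly \eqref{productdoble2} (with bracket \eqref{bracketdoble2}), with $D+u\in\mathfrak{sp}(B,\omega')$ recorded as part of the data.

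Finally I would impose the left-symmetry identity $x\cdot(y\cdot z)-(x\cdot y)\cdot z=y\cdot(x\cdot z)-(y\cdot x)\cdot z$ and sort it by the type of triple; by multilinearity it suffices to test it on triples built from $e$, $d$ and elements of $B$. Triples with at least two $e$'s, and triples in which an $e$ is adjacent to an element of $B$, hold trivially because every product landing in $I$ annihilates $B$ on either side. The triple $(e,d,e)$ gives \eqref{doble1}; $(d,x,d)$ gives \eqref{doble2} and \eqref{doble3}; $(d,x,y)$ gives \eqref{doble4}, which by nondegeneracy of $\omega'$ is equivalent to the bracket relation already present in \eqref{doble2}, i.e. to $\omega_{u,p}\in B_{SG}^2(B,\mathbb{K})$, together with \eqref{doble5}; and $(x,y,d)$ gives \eqref{doble6}. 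The purely interior triples $(x,y,z)$ with $x,y,z\in B$ impose nothing new: their $B$-components hold because $B$ is already a left symmetric algebra, and their $I$-components reduce to $\omega'$ being a scalar $2$-cocycle of $B$ and $u$ lying in $Z_L^1(B,B)$. Assembling, the product is left symmetric exactly under \eqref{doble1}, \eqref{doble3}, $\omega_{u,p}\in B_{SG}^2(B,\mathbb{K})$, \eqref{doble5} and \eqref{doble6}, which is the assertion. The one place where real effort is needed is this last step — the careful expansion of the left-symmetry identity across all triple types and the check that the interior triples are automatic; the rest is just organizing formulas already displayed above.
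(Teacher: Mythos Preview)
Your proposal is correct and follows essentially the same approach as the paper: the proposition is indeed a summary of the normal-form computation carried out in the preceding paragraphs, and you reproduce that computation in the same order --- the vector-space splitting $\mathbb{K}e\oplus B\oplus\mathbb{K}d$, the use of Lemma~\ref{F12} to replace $f$ by $u$, the imposition of \eqref{n12.1} to obtain \eqref{productdoble1}, the commutator constraint to reach \eqref{productdoble2}, and then the left-symmetry identity tested on exactly the four triples the paper uses, namely $(e,d,e)$, $(d,x,d)$, $(d,x,y)$, $(x,y,d)$. Your additional remark that the purely interior triples $(x,y,z)$ and the triples involving $e$ other than $(e,d,e)$ impose nothing new is a useful completeness check that the paper leaves implicit.
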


Reciprocally, we get a method that allows us to construct flat affine symplectic Lie algebras:
\begin{theorem}\label{F15}
	Let $(B,\omega',\overline{\cdot})$ be a flat affine symplectic Lie algebra. Suppose that $\lambda,\mu\in\mathbb{K}$ verify 
	\eqref{doble1} and $u\in Z_{L}^1(B,B)$ with $D\in\mathfrak{gl}(B)$ are such that $D+u\in\mathfrak{sp}(B,\omega')$ and they satisfy \eqref{doble5}. If $x_0\in B$ verifies that  $\omega_{u,p}\in B_{SG}^2(B,\mathbb{K})$ and there exists $z_0\in B$ such that the equalities \eqref{doble3} and \eqref{doble6} are satisfied, then the vector space $\mathfrak{g}=\mathbb{K}e\oplus B\oplus \mathbb{K}d$ endowed with the left symmetric product \eqref{productdoble2} and the symplectic form $\omega$ which extends $\omega'$ and verifies that $\text{Vect}_\mathbb{K}\lbrace e,d\rbrace$ is a hyperbolic plane orthogonal to $B$, is a flat affine symplectic Lie algebra.
\end{theorem}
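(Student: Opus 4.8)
The plan is to run in reverse the computation carried out just before Proposition~\ref{F14}. First I would equip $\mathfrak{g}=\mathbb{K}e\oplus B\oplus\mathbb{K}d$ with the skew-symmetric bilinear form $\omega$ that restricts to $\omega'$ on $B$, satisfies $\omega(e,d)=1$, and makes $\text{Vect}_{\mathbb{K}}\lbrace e,d\rbrace$ orthogonal to $B$; this $\omega$ is nondegenerate, being the orthogonal direct sum of the nondegenerate form $\omega'$ on $B$ with the hyperbolic plane on $\text{Vect}_{\mathbb{K}}\lbrace e,d\rbrace$. Next I would observe that the product \eqref{productdoble2} is precisely the specialization of the general product \eqref{productdoble1} obtained by taking $\alpha=\omega'(z_0,\cdot)$, $\varphi=\omega'(x_0-z_0,\cdot)$, $\gamma=\lambda-\mu$, $p=D+u$, with $D+u\in\mathfrak{sp}(B,\omega')$. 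Since the excerpt establishes that any product of the shape \eqref{productdoble1} automatically satisfies $\omega(X\cdot Y,Z)+\omega(Y,X\cdot Z)=0$ for all $X,Y,Z\in\mathfrak{g}$ (that is exactly the property by which \eqref{productdoble1} was singled out, using $\omega(e,d)=1$, $\text{Vect}_{\mathbb{K}}\lbrace e,d\rbrace\perp_\omega B$, and $D+u\in\mathfrak{sp}(B,\omega')$), the invariance identity \eqref{n12.1} holds on $\mathfrak{g}$ for free, with associated representation $X\mapsto L_X$.

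The real content is to check that \eqref{productdoble2} is a left symmetric product, i.e. that the associator $(X,Y,Z)\mapsto(X\cdot Y)\cdot Z-X\cdot(Y\cdot Z)$ is symmetric in its first two arguments. On the subspace $\mathbb{K}e\oplus B$ the product coincides with that of $I^{\perp_\omega}$, which is left symmetric because $(B,\cdot)$ is left symmetric and $f(x,y)=\omega'(u(x),y)$ is a scalar $2$-cocycle of the left symmetric algebra $B$ (here one uses $u\in Z_L^1(B,B)$ together with Lemma~\ref{F12}); thus only the associator identities in which $d$ occurs remain to be verified, and these are exactly cases (1)--(4) listed in the excerpt. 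Case (1) is the condition \eqref{doble1}; case (2) is \eqref{doble2} together with \eqref{doble3}; case (3) is \eqref{doble4} together with \eqref{doble5}; case (4) is \eqref{doble6}. The hypotheses of the theorem supply \eqref{doble1}, \eqref{doble3}, \eqref{doble5}, \eqref{doble6} directly; the requirement that $\omega_{u,p}$ be the scalar coboundary associated with $x_0$, namely $\omega_{u,p}(x,y)=\omega'(x_0,xy)$, is precisely \eqref{doble4}; and since $\omega'$ is nondegenerate, \eqref{doble2} is the same relation $[u,D]_{\mathfrak{gl}(B)}=u^2+\lambda u-R_{x_0}$ as \eqref{doble4}, hence automatic once \eqref{doble4} holds. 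Therefore all of cases (1)--(4) are met and \eqref{productdoble2} is left symmetric on $\mathfrak{g}$.

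Finally I would assemble the conclusion. Any left symmetric product has a commutator $[X,Y]:=X\cdot Y-Y\cdot X$ satisfying the Jacobi identity --- the left symmetry \eqref{SGprodut} is exactly what forces this --- so $\mathfrak{g}$ becomes a Lie algebra, and by the calibration of the parameters this bracket is the one displayed in \eqref{bracketdoble2} and extends the bracket of $B$. Combining $[X,Y]=X\cdot Y-Y\cdot X$ with the invariance $\omega(X\cdot Y,Z)+\omega(Y,X\cdot Z)=0$ established above yields, by the same two-line manipulation used in the proof of Theorem~\ref{CharacterizationLeft}, the scalar cocycle identity $\omega([X,Y],Z)+\omega([Y,Z],X)+\omega([Z,X],Y)=0$; thus $\omega$ is a nondegenerate scalar $2$-cocycle of $\mathfrak{g}$, its commutator agrees with the Lie bracket, and it is invariant under the left multiplications $L_X$. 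Hence $(\mathfrak{g},\omega,\cdot)$ is a flat affine symplectic Lie algebra, as claimed.

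I expect the only genuine obstacle to be bookkeeping: one must apply the dictionary between the symbols of \eqref{productdoble1} and \eqref{productdoble2} consistently --- in particular tracking how $\alpha$, $z_0$, $\varphi$, $x_0$ and $p$ interlock, and distinguishing the $0$-cochain $\omega'(x_0,\cdot)$ used to trivialize $\omega_{u,p}$ from the coefficient $\varphi=\omega'(x_0-z_0,\cdot)$ appearing in $x\cdot d$ --- so that cases (2) and (3) genuinely reduce to the four displayed relations plus the coboundary condition on $\omega_{u,p}$. Once this identification is pinned down, no computation beyond what the excerpt already records is required.
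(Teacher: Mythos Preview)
Your proposal is correct and follows exactly the approach the paper intends: Theorem~\ref{F15} is stated in the paper as the converse of the computation preceding Proposition~\ref{F14}, with no separate proof given, so reversing that computation---checking that \eqref{productdoble2} satisfies \eqref{n12.1} by design, that left symmetry on $\mathbb{K}e\oplus B$ follows from $u\in Z_L^1(B,B)$ via Lemma~\ref{F12}, and that the remaining associator identities involving $d$ reduce to \eqref{doble1}--\eqref{doble6}---is precisely what is required. Your remark that \eqref{doble2} and \eqref{doble4} are equivalent (so that the coboundary condition on $\omega_{u,p}$ with cochain $\omega'(x_0,\cdot)$ handles both) is the one point the paper leaves implicit, and you have it right.
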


To set up terminology we give the following definition:
\begin{Definition}\label{doubleextensionFAS}
	The Lie algebra $(\mathfrak{g},\omega,\cdot)$ obtained from Theorem \ref{F15} is called \emph{the double extension} of the flat affine symplectic Lie algebra $(B,\omega',\overline{\cdot})$ according to $(u,D,x_0,z_0,\beta,\lambda,\mu)$.
\end{Definition}
Recall that by \eqref{doble1} we have that the parameters $\lambda$ and $\mu$ in the double extension of a flat affine symplectic Lie algebra are related to each other as $\lambda=\mu$ or $\lambda=\dfrac{\mu}{2}$. In the case that $\lambda=\mu$, we get the following special consequence of Theorem \ref{F15}.
\begin{Corollary}\label{F17}
	If 	$(G,\omega^+,\nabla)$ is a simply connected flat affine symplectic Lie group whose Lie algebra is obtained as a double extension of a flat affine symplectic Lie algebra according with $(u,D,x_0,z_0,\beta,\mu=\lambda)$, then $G$ is identified with a subgroup of $\mathfrak{g}\rtimes_{Id}\textsf{Sp}(\mathfrak{g},\omega^+_\epsilon)$ containing a nontrivial one parameter subgroup formed by central translations.
\end{Corollary}
\begin{proof}
	Let $(\mathfrak{g},\omega,\cdot)$ be the flat affine symplectic Lie algebra associated to $(G,\omega^+,\nabla)$. If $\mathfrak{g}$ is obtained as a double extension of a flat affine symplectic Lie algebra according with the parameters $(u,D,x_0,z_0,\beta,\lambda,\mu)$ where $\lambda=\mu$, then it decomposes as $\mathfrak{g}=\mathbb{K}e\oplus B\oplus \mathbb{K}d$ and $\omega'=\omega|_B$ with $B\perp_\omega\text{Vect}_\mathbb{K}\lbrace e,d\rbrace$ and  $\omega(e,d)=1$. Therefore, the left symmetric product deduced from $\nabla$ is given by
	\begin{eqnarray*}
		&  & e\cdot x=x\cdot e=e\cdot e=0\\
		&  & x\cdot y=\omega'(u(x),y)e+xy\\
		&  & d\cdot x=\omega'(x_0,x)e+(D+u)(x)\\
		&  & x\cdot d=\omega'(x_0-z_0,x)e+u(x)\\
		&  & d\cdot e=\lambda e\\
		&  & e\cdot d=0\\
		&  & d\cdot d=\beta e+x_0-\lambda d.
	\end{eqnarray*}
	It is clear that $\text{Ker}(L)\neq \lbrace 0\rbrace$ since $L_e=0$. As $G$ is simply connected, using Theorem \ref{CharacterizationLeft} as we did in Proposition \ref{F16} it follows that $\rho$ determines a nontrivial one parameter subgroup $H$ of $\rho(G)\approx G$ formed by central translations which is induced by $t\mapsto \textsf{exp}_G(te)$ and given by
	$$H=\left\lbrace \rho(\textsf{exp}_G(te))=(te,\text{Id}_\mathfrak{g}):\ t\in\mathbb{R}\right\rbrace.$$
\end{proof}
As a consequence of the proof of Proposition \ref{F16}, we have that every flat affine symplectic Lie algebra $(\mathfrak{g},\omega,\cdot)$ such that $\cdot:\mathfrak{g}\times\mathfrak{g}\to\mathfrak{g}$ is an associative product on $\mathfrak{g}$ is obtained as a double extension of flat affine symplectic Lie algebras from $\lbrace 0 \rbrace$. Indeed,
\begin{proposition}\label{F18}
	Let $(G,\omega^+,\nabla)$ be a flat affine symplectic Lie group such that $\nabla$ is bi-invariant. Then the Lie algebra of $G$  is obtained as a double extension of flat affine symplectic Lie algebras starting from $\lbrace 0 \rbrace$.
\end{proposition}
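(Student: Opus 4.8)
The plan is to reduce Proposition \ref{F18} to a purely algebraic statement and prove that one by induction on dimension. Concretely, I would establish: \emph{every flat affine symplectic Lie algebra $(\mathfrak{g},\omega,\cdot)$ whose left symmetric product $\cdot$ is associative is obtained as an iterated double extension of flat affine symplectic Lie algebras starting from $\lbrace 0 \rbrace$}. This suffices, because the left invariant connection $\nabla$ on $G$ is bi-invariant exactly when the product $\cdot$ of the flat affine symplectic Lie algebra $(\mathfrak{g},\omega^+_\epsilon,\cdot)$ associated to $(G,\omega^+,\nabla)$ is associative, and the conclusion to be proved only involves this algebra. The induction runs on $\dim\mathfrak{g}$, with trivial base case $\dim\mathfrak{g}=0$.

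For the inductive step, I would first reuse the computation from the proof of Proposition \ref{F16}: associativity of $\cdot$ together with \eqref{n12.1} forces $L_x\circ L_x=0$ for all $x\in\mathfrak{g}$, so $(\mathfrak{g},\cdot)$ is a nilpotent associative algebra, and choosing any nonzero $e$ in its last nonzero power $\mathfrak{g}^{m-1}$ yields $L_e=R_e=0$. I would then set $I:=\mathbb{K}e$. Since $e\cdot\mathfrak{g}=\mathfrak{g}\cdot e=0\subset I$, the line $I$ is a one-dimensional bilateral ideal of $(\mathfrak{g},\cdot)$ with $I\cdot I=0$; moreover $I^{\perp_\omega}\cdot I=R_e(I^{\perp_\omega})=0$, so Lemma \ref{F13}(1)--(2) shows that $I^{\perp_\omega}$ is a bilateral ideal as well. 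Lemma \ref{F13}(3) then produces the flat affine symplectic Lie algebra $B:=I^{\perp_\omega}/I$ with $\dim B=\dim\mathfrak{g}-2$, and since its product $\overline{x}\cdot\overline{y}=\overline{x\cdot y}$ is a quotient of the restriction to the subalgebra $I^{\perp_\omega}$ of the associative product of $\mathfrak{g}$, the product of $B$ is again associative.

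To finish, I would invoke Proposition \ref{F14}: decomposing $\mathfrak{g}=\mathbb{K}e\oplus B\oplus\mathbb{K}d$ with $\omega(e,d)=1$ and $\text{Vect}_{\mathbb{K}}\lbrace e,d\rbrace\perp_\omega B$, that proposition forces the left symmetric product of $\mathfrak{g}$ to have exactly the form \eqref{productdoble2} for suitable parameters $(u,D,x_0,z_0,\beta,\lambda,\mu)$ satisfying the compatibility relations there; that is, $(\mathfrak{g},\omega,\cdot)$ is the double extension of $(B,\omega',\overline{\cdot})$ in the sense of Definition \ref{doubleextensionFAS}. Applying the induction hypothesis to $B$ and appending this last double extension then exhibits $\mathfrak{g}$ as an iterated double extension from $\lbrace 0 \rbrace$. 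I expect the only delicate point to be the choice of $I$ and the verification that both $I$ and $I^{\perp_\omega}$ are bilateral ideals of $(\mathfrak{g},\cdot)$ — this is precisely where associativity and the existence of the central element $e\in\mathfrak{g}^{m-1}$ with $R_e=0$ are used; once that is in place, Lemma \ref{F13} and Proposition \ref{F14} do the rest with no further computation.
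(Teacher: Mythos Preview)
Your proposal is correct and follows essentially the same approach as the paper: both pick a nonzero $e$ with $L_e=R_e=0$ (furnished by the nilpotency argument of Proposition~\ref{F16}), set $I=\mathbb{K}e$, verify that $I$ and $I^{\perp_\omega}$ are bilateral ideals, pass to the quotient $B=I^{\perp_\omega}/I$ (whose induced product is again associative), and iterate. The only cosmetic difference is that you route the verification that $I^{\perp_\omega}$ is a right ideal through Lemma~\ref{F13}(2), whereas the paper checks it directly via $\omega(y\cdot x,e)=-\omega(x,y\cdot e)=0$; these are the same computation.
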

\begin{proof}
	If $(\mathfrak{g},\omega,\cdot)$ is the flat affine symplectic algebra associated to $(G,\omega^+,\nabla)$, then $x\cdot y=L_x(y)=R_y(x)=(\nabla_{x^+}y^+)(\epsilon)$ is an associative product on $\mathfrak{g}$ since $\nabla$ is bi-invariant. Under this assumptions, we know that $\mathfrak{g}$ is a nilpotent algebra and there exists an element $e\neq 0$ in $\mathfrak{g}$ such that $L_e=R_e=0$ (see proof of Proposition \ref{F16}). As consequence $e\in\mathfrak{z}(\mathfrak{g})$ (center of $\mathfrak{g}$) since $[e,x]=L_e(x)-R_e(x)=0$ for all $x\in\mathfrak{g}$. We define by $I=\mathbb{R}e$. It is clear that $I$ is a bilateral ideal of $(\mathfrak{g},\cdot)$ since $e\cdot x=x\cdot e=0$. On the other hand, we have that $I^{\perp_\omega}$ is also a bilateral ideal of $(\mathfrak{g},\cdot)$. Indeed, for all $y\in I^{\perp_\omega}$ and $x\in\mathfrak{g}$ 
	$$\omega(x\cdot y,e)=-\omega(y,x\cdot e)=-\omega(y,0)=0\qquad\text{and}$$
	$$\omega(y\cdot x,e)=-\omega(x,y\cdot e)=-\omega(x,0)=0.$$
	and last claim follows because $\omega$ is nondegenerate. Therefore, the Lie algebra $\mathfrak{g}$ is obtained as a double extension from $(B,\omega',\overline{\cdot})$ where $B=I^{\perp_\omega}/I$. On $\mathfrak{g}=\mathbb{K}e\oplus B\oplus \mathbb{K}d$, the product $\cdot$ is given by the
	formulas \eqref{productdoble2} and we find that the associativity of this product implies the associativity of the product $\overline{\cdot}$ on $B$. Hence, $\mathfrak{g}$ is obtained by a series of double extensions of flat affine symplectic Lie algebras starting from $\lbrace 0\rbrace$.
\end{proof}
By means of the double extension of flat affine symplectic Lie algebras we get every flat affine symplectic Lie algebras in dimension 2. These where initially found by A. Andrada in \cite{An} (see also \cite{MSG}).
\begin{example}\label{Example1C3}
	If in the double extension of flat affine symplectic Lie algebras  we put $B=\lbrace 0 \rbrace$, then we have that $\mathfrak{g}=\text{Vect}_\mathbb{K}\lbrace e,d\rbrace$ with symplectic form $\omega(e,d)=1$ and Lie bracket $[d,e]=\mu e$. In this case, the product \eqref{productdoble2} is given by
	$$\begin{tabular}{l | c  r}
	$\cdot$ & $d$ & $e$ \\
	\hline
	$d$ & $\beta e-\lambda d$ & $\lambda e$\\
	$e$ & $(\lambda-\mu)e $ & $0$\\
	\end{tabular}\quad\text{with}\quad \lambda,\mu,\beta\in\mathbb{K}.
	$$
	\begin{enumerate}
		\item When $\mu=0$, we have that $\mathfrak{g}$ is isomorphic to $\mathbb{R}^2$. Also, as $\mu=0$, by \eqref{doble1} we get that $\lambda=0$. Moreover, taking either $\beta\neq 0$ or $\beta=0$ we get
		$$\begin{tabular}{l | c  r}
		$\cdot$ & $d$ & $e$ \\
		\hline
		$d$ & $\beta e$ & $0$\\
		$e$ & $0$ & $0$\\
		\end{tabular}\qquad\qquad\qquad\qquad \begin{tabular}{l | c  r}
		$\cdot$ & $d$ & $e$ \\
		\hline
		$d$ & $0$ & $0$\\
		$e$ & $0 $ & $0$\\
		\end{tabular}
		$$
		so determining the only two non-isomorphic left invariant flat affine symplectic connections over $\mathbb{R}^2$. When $\beta\neq 0$ all these connections are isomorphic. Moreover, these connections induce on the 2-Torus $\mathbb{T}^2$ structures of flat affine symplectic Lie group. E. Remm and M. Goze found in \cite{RG} all the structures of flat affine Lie group on the abelian Lie group  $\mathbb{R}^2$ and they showed that the only ones of such structures that induce left invariant flat affine structures on $\mathbb{T}^2$ are precisely the structures determined by the left symmetric products above.
		\item On the other hand, when $\mu\neq 0$ we have that $\mathfrak{g}$ is isomorphic to $\mathfrak{aff}(\mathbb{R})$. By \eqref{doble1} we know that $\lambda=\mu$ or $\lambda=\dfrac{\mu}{2}$. So we get 
		$$\begin{tabular}{l | c  r}
		$\cdot$ & $d$ & $e$ \\
		\hline
		$d$ & $\beta e-\mu d$ & $\mu e$\\
		$e$ & $0$ & $0$\\
		\end{tabular}\qquad\qquad\qquad\qquad \begin{tabular}{l | c  r}
		$\cdot$ & $d$ & $e$ \\
		\hline
		$d$ & $\beta e-\dfrac{\mu}{2} d$ & $\dfrac{\mu}{2} e$\\
		$e$ & $-\dfrac{\mu}{2} e $ & $0$\\
		\end{tabular}
		$$
		Each of these families determine a unique connection up to isomorphism.
	\end{enumerate}
\end{example}
\subsection{Nontrivial examples in every even dimension}
We will explain an easy way of getting non-trivial simply connected flat affine symplectic Lie groups of dimension $2n$ from the model space $((\mathbb{R}^{2n-2},+),\omega_0,\nabla^0)$. Here $\omega_0$ is the usual symplectic form and $\nabla^0$ is the usual linear connection on $\mathbb{R}^{2n-2}$. Recall that $\nabla^0$ is a left invariant flat affine connection which is symplectic with respect to $\omega_0$. The abelian flat affine symplectic Lie algebra associated to $(\mathbb{R}^{2n-2},\omega_0,\nabla^0)$ is $(\mathbb{R}^{2n-2},\omega_0,\cdot^0)$ where $x\cdot^0y=0$ for all $x,y\in \mathbb{R}^{2n-2}$ since $\nabla^0_{\partial_i}\partial_j=0$.

Suppose that $u=0$, $x_0=z_0$, and $D\in\mathfrak{sp}(\mathbb{R}^{2n-2},\omega_0)$. The double extension of flat affine symplectic Lie algebras under these conditions is $\mathfrak{g}=\mathbb{R}e\oplus \mathbb{R}^{2n-2}\oplus \mathbb{R}d\approx \mathbb{R}^{2n}$ with nonzero Lie brackets $[d,e]=\mu e$ and $[d,x]=\omega_0(x_0,x)e+D(x)$ for all $x\in \mathbb{R}^{2n-2}$.
It is easy to verify that $\mathfrak{g}$ is isomorphic as Lie algebra to the Lie algebra $\mathbb{R}d\ltimes_{\theta}(\mathbb{R}e\times \mathbb{R}^{2n-2})$ where $\theta:\mathbb{R}d\to \mathfrak{gl}(\mathbb{R}e\times \mathbb{R}^{2n-2})$ is the Lie algebra homomorphism  defined by $\theta(d)(e)=\mu e$ and $\theta(d)(x)=\omega_0(x_0,x)e+D(x)$. Therefore, a simply connected Lie group $G$ with Lie algebra $\mathfrak{g}$ is given by  $G=\mathbb{R}\ltimes_\rho(\mathbb{R}\times\mathbb{R}^{2n-2})$ where $\rho$ is the Lie group homomorphism
$$\rho: \mathbb{R} \to \textsf{GL}(\mathbb{R}\times \mathbb{R}^{2n-2}),\qquad t \mapsto \textsf{Exp}\left\lbrace t\begin{pmatrix}
\mu & [\omega^\flat(x_0)]_\gamma\\
0_{n\times 1} & [D]_\gamma
\end{pmatrix}\right\rbrace$$
where $[\omega^\flat(x_0)]_\gamma$ and $[D]_\gamma$ denote the matrix representations of the linear maps $\omega_0(x_0,\cdot):\mathbb{R}^{2n-2}\to\mathbb{R}$ and $D:\mathbb{R}^{2n-2}\to \mathbb{R}^{2n-2}$, respectively, with respect to a fixed ordered basis $\gamma$ of $\mathbb{R}^{2n-2}$. Let $\omega^+$ be the left invariant symplectic form on $G$ determined by $\omega$, where $\omega$ is the symplectic structure on $\mathfrak{g}$ with $\omega|_{\mathbb{R}^{2n-2}}=\omega_0$, $\text{Vect}_\mathbb{K}\lbrace e,d\rbrace\perp_\omega \mathbb{R}^{2n-2}$, and $\omega(e,d)=1$. Hence, for $\lambda=\mu$ or $\lambda=\dfrac{\mu}{2}$, and $(\lambda-\mu)x_0^+=0$, we have that
\begin{eqnarray}\label{dobleExample1}
&  & \nabla_{e^+}x^+=\nabla_{x^+} e^+=\nabla_{e^+}e^+=\nabla_{x^+}y^+=\nabla_{x^+}d^+=0\nonumber\\
&  & \nabla_{d^+} x^+=\omega_0(x_0,x)e^++D(x)^+\nonumber\\
&  & \nabla_{d^+} e^+=\lambda e^+\nonumber\\
&  & \nabla_{e^+} d^+=(\lambda-\mu)e^+\nonumber\\
&  & \nabla_{d^+} d^+=\beta e^++x_0^+-\lambda d^+\nonumber,
\end{eqnarray}
is a left invariant flat affine symplectic connection on $(\mathbb{R}\ltimes_\rho(\mathbb{R}\times \mathbb{R}^{2n-2}),\omega^+)$.
\begin{remark}
	If $\mu=\lambda$, then $x_0$ can take an arbitrary value. Moreover, if $\mu=0$ then $e\in\mathfrak{z}(\mathfrak{g})$. On the other hand, if $\lambda=\dfrac{\mu}{2}\neq 0$, one necessarily gets that $x_0=0$. The choice of $D\in\mathfrak{sp}(\mathbb{R}^{2n-2},\omega_0)$ is always arbitrary (interesting examples can be obtained if we take $D=J_0$ where $J_0$ is the canonical complex structure on $\mathbb{R}^{2n-2}$). Therefore, these statements allows us to conclude that the above construction produces different nontrivial flat affine symplectic structures on $(\mathbb{R}\ltimes_\rho(\mathbb{R}\times \mathbb{R}^{2n-2}),\omega^+)$. Everyone may convince itself by starting the construction with the canonical abelian flat affine symplectic Lie group $(\mathbb{R}^2,\omega_0,\nabla^0)$ where $\mathfrak{sp}(\mathbb{R}^2,\omega_0)=\mathfrak{sl}(2,\mathbb{R})$.
\end{remark}

\section{Twisted cotangent symplectic Lie group}
Our main purpose in this section is to give a method for constructing left invariant flat affine symplectic connections on the ``twisted'' cotangent symplectic Lie group of a connected flat affine Lie group. We will do it using again Nijenhuis' cohomology for left symmetric algebras. As a first step, we introduce the construction of the classical symplectic cotangent Lie group of a connected flat affine Lie group due to A. Medina and Ph. Revoy (see \cite{MR}). Let $(G,\nabla)$ be a connected flat affine Lie group, $\mathfrak{g}$ its Lie algebra, $\widetilde{G}$ its universal covering Lie group, and $\mathfrak{g}^\ast$ the dual space of $\mathfrak{g}$. We denote by $L:\mathfrak{g}\to \mathfrak{gl}(\mathfrak{g})$ the Lie algebra representation induced by $\nabla$ through the formula $x\cdot y=L_x(y)=(\nabla_{x^+}y^+)(\epsilon)$ for all $x,y\in\mathfrak{g}$. The dual representation associated to $L$ is the Lie algebra homomorphism $L^\ast:\mathfrak{g}\to \mathfrak{gl}(\mathfrak{g}^*)$ defined by $L_x^\ast(\alpha)=-^tL_x(\alpha)=-\alpha\circ L_x$ for all $\alpha\in\mathfrak{g}^\ast$. Passing to exponential, we get a Lie group homomorphism $\Phi:\widetilde{G} \to \textsf{GL}(\mathfrak{g}^*)$ determined by $\displaystyle \textsf{exp}_G(x) \mapsto \sum_{k=0}^{\infty}\dfrac{1}{k!}(L_x^\ast)^k$.
Therefore, the product manifold $T^*\widetilde{G}:=\widetilde{G}\times \mathfrak{g}^*$ has a Lie group structure given by
$$(\sigma,\alpha)\cdot(\tau,\beta)=(\sigma\tau,\Phi(\sigma)(\beta)+\alpha),\qquad\sigma,\tau\in\widetilde{G},\quad\alpha,\beta\in\mathfrak{g}^\ast,$$
this is, the semidirect product of $\widetilde{G}$ with the abelian Lie group $\mathfrak{g}^\ast$ by means of $\Phi$. The Lie group $T^*\widetilde{G}=\widetilde{G}\ltimes_\Phi\mathfrak{g}^*$ is called \emph{classical symplectic cotangent Lie group} associated to the connected flat affine Lie group $(G,\nabla)$. The word ``symplectic" in the name of $T^*\widetilde{G}$ is motivated by the following result:
\begin{proposition}\cite{MR}\label{HessProposition1}
	The Lie algebra of $T^*\widetilde{G}=\widetilde{G}\ltimes_\Phi\mathfrak{g}^*$ is the product vector space $\mathfrak{g}\oplus \mathfrak{g}^\ast$ with Lie bracket
	\begin{equation}\label{Hess1}
	[x+\alpha,y+\beta]=[x,y]+L^*_x(\beta)-L^*_y(\alpha),\qquad x,y\in\mathfrak{g},\quad\alpha,\beta\in\mathfrak{g}^\ast.
	\end{equation}
	Furthermore, $\mathfrak{g}\ltimes_L\mathfrak{g}^\ast$ is a symplectic Lie algebra with the nondegenerate scalar 2-cocycle $\widetilde{\omega}$ defined by 
	\begin{equation}\label{Hess2}
	\widetilde{\omega}(x+\alpha,y+\beta)=\alpha(y)-\beta(x),\qquad x,y\in\mathfrak{g},\quad\alpha,\beta\in\mathfrak{g}^\ast.
	\end{equation}
\end{proposition}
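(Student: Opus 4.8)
The plan is to treat the two assertions of the proposition separately: first identify the Lie algebra of the semidirect product $T^*\widetilde{G}=\widetilde{G}\ltimes_\Phi\mathfrak{g}^*$, and then verify by a direct computation that $\widetilde{\omega}$ is a nondegenerate scalar $2$-cocycle of that Lie algebra.

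For the first part I would invoke the standard description of the Lie algebra of a semidirect product: if $A$ is a vector group and $\Psi\colon H\to\textsf{GL}(A)$ is a Lie group homomorphism, then the Lie algebra of $H\ltimes_\Psi A$ is the vector space $\mathfrak{h}\oplus A$ with bracket $[(x,v),(y,w)]=([x,y],\psi(x)(w)-\psi(y)(v))$, where $\psi=d\Psi_\epsilon$. Applying this with $H=\widetilde{G}$, $A=\mathfrak{g}^*$ and $\Psi=\Phi$, it remains only to compute $d\Phi_\epsilon$. Since $\Phi(\textsf{exp}_G(x))=\sum_{k\ge 0}\frac{1}{k!}(L_x^*)^k=\textsf{Exp}(L_x^*)$, differentiating the curve $t\mapsto\Phi(\textsf{exp}_G(tx))$ at $t=0$ gives $d\Phi_\epsilon(x)=L_x^*$. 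Hence the Lie algebra of $T^*\widetilde{G}$ is $\mathfrak{g}\oplus\mathfrak{g}^*$ with bracket $[x+\alpha,y+\beta]=[x,y]+L_x^*(\beta)-L_y^*(\alpha)$, which is \eqref{Hess1}; the Jacobi identity is automatic since this is the Lie algebra of a Lie group. (That $L^*$ is a Lie algebra homomorphism, hence that $\Phi$ is well defined on the simply connected group $\widetilde{G}$, is just the dualization of the homomorphism $L$ furnished by the flat affine connection, and was already used to define $T^*\widetilde{G}$.)

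For the second part, skew-symmetry of $\widetilde{\omega}$ is immediate from \eqref{Hess2}, and nondegeneracy holds because the restriction of $\widetilde{\omega}$ to $\mathfrak{g}\times\mathfrak{g}^*$ is, up to sign, the canonical pairing: if $\widetilde{\omega}(x+\alpha,\cdot)=0$, then evaluating on elements $y\in\mathfrak{g}$ forces $\alpha=0$ and evaluating on elements $\beta\in\mathfrak{g}^*$ forces $x=0$. The only real work is the cocycle identity $\sum_{\mathrm{cyc}}\widetilde{\omega}([X,Y],Z)=0$ for $X=x+\alpha$, $Y=y+\beta$, $Z=z+\gamma$. Using \eqref{Hess1} together with $L_x^*(\beta)=-\beta\circ L_x$, one obtains $\widetilde{\omega}([X,Y],Z)=-\beta(x\cdot z)+\alpha(y\cdot z)-\gamma([x,y])$, and I would then add the three cyclic terms and collect separately the coefficients of $\alpha$, of $\beta$ and of $\gamma$; each of these three collections telescopes to zero precisely because $\nabla$ is torsion free, i.e.\ $[x,y]=x\cdot y-y\cdot x$ (for instance the $\gamma$-terms give $-\gamma([x,y])-\gamma(y\cdot x)+\gamma(x\cdot y)=0$, and likewise for $\alpha$ and $\beta$). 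This shows that $\widetilde{\omega}$ is a nondegenerate scalar $2$-cocycle, so $(\mathfrak{g}\ltimes_L\mathfrak{g}^*,\widetilde{\omega})$ is a symplectic Lie algebra.

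There is no substantial obstacle here; the argument is essentially bookkeeping. The one place to be careful is the sign and normalization conventions — the semidirect-product group law as written in the statement, the definition $L_x^*(\alpha)=-{}^tL_x(\alpha)=-\alpha\circ L_x$, and the placement of the minus sign in $\widetilde{\omega}(x+\alpha,y+\beta)=\alpha(y)-\beta(x)$ — so that the telescoping in the cyclic sum genuinely uses torsion-freeness (and not, say, left-symmetry) with the correct signs. It is worth recording explicitly in the write-up that flatness of $\nabla$ enters only through the fact that $L$, hence $L^*$, is a Lie algebra homomorphism (so that $T^*\widetilde{G}$ is a group and \eqref{Hess1} holds), whereas the cocycle identity for $\widetilde{\omega}$ needs only the torsion-free identity $[x,y]=x\cdot y-y\cdot x$.
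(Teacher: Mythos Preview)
Your proof is correct and complete. Note, however, that the paper does not supply its own proof of this proposition: it is stated as a result of Medina--Revoy with a citation to \cite{MR}, so there is no in-paper argument to compare against. Your treatment---invoking the standard Lie algebra of a semidirect product for \eqref{Hess1}, and checking the cocycle identity for $\widetilde{\omega}$ by collecting the $\alpha$-, $\beta$-, and $\gamma$-terms in the cyclic sum and using $[x,y]=x\cdot y-y\cdot x$---is exactly the natural direct verification, and your remark that flatness enters only to make $L^*$ a representation while the cocycle identity needs only torsion-freeness is accurate and worth keeping.
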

Let $\omega^+$ be the left invariant symplectic form on $T^\ast \widetilde{G}$ induced by $\widetilde{\omega}$. As a first result we get the left invariant flat affine connection on $T^\ast\widetilde{G}$ determined by $\omega^+$; see Remark \ref{inducedconnection}.
\begin{proposition}
	The left invariant flat affine connection $\widetilde{\nabla}$ on $T^\ast\widetilde{G}$ determined by $\omega^+$ is 
	$$\widetilde{\nabla}_{(x+\alpha)^+}(y+\beta)^+=(xy+\textnormal{ad}^*_x(\beta)+\alpha\circ R_y)^+,\qquad x+\alpha,\ y+\beta\in \mathfrak{g}\oplus\mathfrak{g}^\ast,$$
	where $R_y:\mathfrak{g}\to \mathfrak{g}$ is the linear map defined by $x\mapsto R_y(x)=xy=(\nabla_{x^+}y^+)(\epsilon)$.
\end{proposition}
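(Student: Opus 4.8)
The plan is to reduce the statement to the characterization of B.-Y. Chu recalled above (see \cite{B}): a symplectic Lie group carries a \emph{unique} left invariant flat affine connection $\nabla$, namely the one determined on left invariant vector fields by the formula $\omega^+(\nabla_{x^+}y^+,z^+)=-\omega^+(y^+,[x^+,z^+])$. Here the symplectic Lie group is $T^*\widetilde{G}$ with the left invariant symplectic form $\omega^+$ induced by the nondegenerate scalar $2$-cocycle $\widetilde{\omega}$ of $\mathfrak{g}\ltimes_L\mathfrak{g}^*$ of Proposition \ref{HessProposition1}. Since a left invariant connection is completely determined by its values on left invariant vector fields, it suffices to prescribe $\widetilde{\nabla}_{(x+\alpha)^+}(y+\beta)^+$ to be the left invariant vector field attached to $xy+\text{ad}^*_x(\beta)+\alpha\circ R_y\in\mathfrak{g}\oplus\mathfrak{g}^*$ and to verify the Chu identity purely algebraically; flatness and vanishing of torsion then come for free from Chu's theorem, and no separate verification is needed.

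So one fixes $x+\alpha$, $y+\beta$, $z+\gamma$ in $\mathfrak{g}\oplus\mathfrak{g}^*$ and compares the two sides. On the right hand side I would expand the bracket by \eqref{Hess1} as $[x+\alpha,z+\gamma]=[x,z]+L^*_x(\gamma)-L^*_z(\alpha)$, pair with $y+\beta$ using \eqref{Hess2}, and substitute $L^*_x(\gamma)=-\gamma\circ L_x$ (so $L^*_x(\gamma)(y)=-\gamma(xy)$ and likewise $L^*_z(\alpha)(y)=-\alpha(zy)$); this turns $-\widetilde{\omega}\bigl(y+\beta,[x+\alpha,z+\gamma]\bigr)$ into $-\beta([x,z])-\gamma(xy)+\alpha(zy)$. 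On the left hand side I would pair $xy+\text{ad}^*_x(\beta)+\alpha\circ R_y$ with $z+\gamma$ via \eqref{Hess2}, obtaining $\text{ad}^*_x(\beta)(z)+\alpha(R_y z)-\gamma(xy)$; using $\text{ad}^*_x(\beta)(z)=-\beta([x,z])$ and $R_y z=zy$ this is again $-\beta([x,z])+\alpha(zy)-\gamma(xy)$. The two expressions coincide, so $\widetilde{\nabla}$ satisfies the defining relation of the Chu connection of $(T^*\widetilde{G},\omega^+)$; by uniqueness it \emph{is} that connection, which is the assertion.

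The computation is entirely routine; the only place requiring care is the sign bookkeeping for the dual and coadjoint representations ($L^*_x=-{}^tL_x$, $\text{ad}^*_x=-{}^t\text{ad}_x$) and for the convention $\widetilde{\omega}(u+\mu,v+\nu)=\mu(v)-\nu(u)$. Thus there is no genuine obstacle: once these conventions are pinned down, both sides of the Chu identity collapse to the common expression $-\beta([x,z])+\alpha(zy)-\gamma(xy)$, and the proof is complete.
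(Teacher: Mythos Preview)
Your proposal is correct and follows essentially the same route as the paper: both arguments reduce to verifying the Chu identity $\widetilde{\omega}\bigl((x+\alpha)\odot(y+\beta),z+\gamma\bigr)=-\widetilde{\omega}\bigl(y+\beta,[x+\alpha,z+\gamma]\bigr)$ and arrive at the common expression $-\beta([x,z])+\alpha(zy)-\gamma(xy)$.

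The only difference is one of packaging. The paper first invokes the Nijenhuis bimodule framework to observe that $x\cdot\beta=\text{ad}^*_x(\beta)$ and $\alpha\Box y=\alpha\circ R_y$ endow $\mathfrak{g}^*$ with a $\mathfrak{g}$-bimodule structure, so that $\odot$ is left symmetric a priori, and then separately checks that its commutator reproduces the bracket \eqref{Hess1} before verifying the Chu identity. You skip these preliminary checks and rely on Chu's theorem to furnish flatness and torsion-freeness automatically once the defining identity is established. Your shortcut is legitimate and slightly more economical; the paper's route has the mild advantage of making the left symmetric structure on $\mathfrak{g}\oplus\mathfrak{g}^*$ explicit, which is reused later in the paper (e.g.\ in the twisted cotangent construction).
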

\begin{proof}
	We denote $L_x(y)=R_y(x)=xy=(\nabla_{x^+}y^+)(\epsilon)$ for all $x,y\in \mathfrak{g}$. A. Medina and Ph. Revoy observed in \cite{MR} that the dual vector space $\mathfrak{g}^*$ has a structure of $\mathfrak{g}$-bimodule given by the bilinear maps $\cdot:\mathfrak{g}\times \mathfrak{g}^*\to\mathfrak{g}^*$ and $\Box:\mathfrak{g}^*\times\mathfrak{g} \to  \mathfrak{g}^*$
	which are defined by $(x,\beta) \mapsto x\cdot \beta=\text{ad}^*_x(\beta)$ and $(\alpha,y) \mapsto \alpha\Box y=\alpha\circ R_y$, respectively. Therefore, the product vector space $\mathfrak{g}\times\mathfrak{g}^\ast$
	is a left symmetric algebra with product given by
	\begin{equation}\label{F1P}
	(x+\alpha)\odot(y+\beta)=xy+\text{ad}^*_x(\beta)+\alpha\circ R_y,\qquad x+\alpha,\ y+\beta\in \mathfrak{g}\times\mathfrak{g}^\ast.
	\end{equation}
	For the last claim see \cite{N}. On the one hand, as $\text{ad}_x=L_x-R_x$ for all $x\in\mathfrak{g}$, the commutator of  the left symmetric product given in \eqref{F1P} is
	\begin{eqnarray*}		(x+\alpha)\odot(y+\beta)-(y+\beta)\odot(x+\alpha) & = & (xy+\text{ad}^*_x(\beta)+\alpha\circ R_y)-(yx+\text{ad}^*_y(\alpha)+\beta\circ R_x)\\
		& = & xy-yx+-\beta\circ \text{ad}_x+\alpha\circ R_y+\alpha\circ \text{ad}_y-\beta\circ R_x\\
		& = & [x,y]-\beta\circ L_x+\alpha\circ L_y\\
		& = & [x,y]+L^*_x(\beta)-L^*_y(\alpha).
	\end{eqnarray*}
	On the other hand,
	\begin{eqnarray*}
		\widetilde{\omega}((x+\alpha)\odot(y+\beta),z+\gamma) & = & 	\widetilde{\omega}(xy+\text{ad}^*_x(\beta)+\alpha\circ R_y,z+\gamma)\\
		& = & -(\beta\circ \text{ad}^*_x)(z)+\alpha(zy)-\gamma(xy)\\
		& = & -(\gamma\circ L_x)(y)+(\alpha\circ L_z)(y)-\beta([x,z])\\
		& = & -\widetilde{\omega}(y+\beta,[x,z]+L^*_x(\gamma)-L^*_z(\alpha))\\
		& = & -\widetilde{\omega}(y+\beta,[x+\alpha,z+\gamma]),
	\end{eqnarray*}
	for all $\alpha,\beta,\gamma\in \mathfrak{g}^*$ and $x,y,z\in \mathfrak{g}$. Therefore, the left invariant flat affine connection induced by the $\omega^+$ on $T^\ast\widetilde{G}$ is
	$$\widetilde{\nabla}_{(x+\alpha)^+}(y+\beta)^+=(xy+\text{ad}^*_x(\beta)+\alpha\circ R_y)^+,\qquad x+\alpha,\ y+\beta\in \mathfrak{g}\oplus\mathfrak{g}^\ast.$$
\end{proof}
A foliation $\mathcal{F}$ of a symplectic manifold $(M,\omega)$ is called \emph{Lagrangian} if its leaves are Lagrangian submanifolds of $(M,\omega)$. Two foliations $\mathcal{F}_1$ and $\mathcal{F}_2$ of $M$ are called \emph{transversal} if $TM=T\mathcal{F}_1\oplus \mathcal{F}_2$ where $T\mathcal{F}_j$ denotes the tangent distribution associated to $\mathcal{F}_j$. If $(M,\omega)$ is a symplectic manifold and $\mathcal{F}_1$, $\mathcal{F}_2$ are Lagrangian transversal foliations of $M$, we say that $(M,\omega,\mathcal{F}_1,\mathcal{F}_2)$ is a \emph{bi-Lagrangian manifold}.

The following important theorem is due to H. Hess:
\begin{theorem} \cite{Hs}\label{HessConnection}
	If $(M,\omega,\mathcal{F}_1,\mathcal{F}_2)$ is a bi-Lagrangian manifold, there exists a unique torsion free symplectic linear connection $\nabla$ on $M$ that parallelizes both foliations\footnote{A linear connection $\nabla$ \emph{parallelizes} or \emph{preserves} a regular foliation $\mathcal{F}$ if $\nabla_XY\in \Gamma(T\mathcal{F})$ for all $Y\in \Gamma(T\mathcal{F})$ and $X\in\mathfrak{X}(M)$.
	}. This linear connection satisfies the relation
	$$\nabla_{(X_1+Y_1)}(X_2+Y_2)=(\nabla^\flat_{X_1}X_2+\text{pr}_{\mathcal{F}_1}[Y_1,X_2],\text{pr}_{\mathcal{F}_2}[X_1,Y_2]+\nabla^\flat_{Y_1}Y_2),$$
	where $\nabla^\flat$ is the only symplectic linear connection determined by the identity
	$$\omega(\nabla^\flat_XY,Z)=X\cdot \omega(Y,Z)-\omega(Y,[X,Z]),$$
	for all  $X,Y\in\Gamma(T\mathcal{F}_i)$ and $Z\in\mathfrak{X}(M)$, with $i=1,2$.
\end{theorem}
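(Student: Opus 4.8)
The plan is to construct the leafwise connection $\nabla^\flat$ first, then define $\nabla$ by the stated formula with respect to the splitting $TM=T\mathcal{F}_1\oplus T\mathcal{F}_2$, verify the four required properties (genuine linear connection, torsion free, symplectic, parallelizes both foliations), and finally prove uniqueness by showing any connection with these properties is forced to satisfy the formula.

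First I would check that the identity $\omega(\nabla^\flat_X Y,Z)=X\cdot\omega(Y,Z)-\omega(Y,[X,Z])$, read as a definition of $\nabla^\flat_X Y$ for $X,Y\in\Gamma(T\mathcal{F}_i)$, makes sense: since $\omega$ is nondegenerate it suffices to see the right-hand side is $C^\infty(M)$-linear in $Z$, which is a one-line Leibniz computation. Because $\mathcal{F}_i$ is involutive and Lagrangian one then gets for free that $\nabla^\flat_X Y\in\Gamma(T\mathcal{F}_i)$ (test against $Z\in\Gamma(T\mathcal{F}_i)$: both $\omega(Y,Z)$ and $\omega(Y,[X,Z])$ vanish), that $\nabla^\flat$ is $C^\infty(M)$-linear in $X$ (the obstruction term is a multiple of $\omega(Y,X)=0$), and that it obeys the Leibniz rule in $Y$; so $\nabla^\flat$ is a leafwise linear connection on $T\mathcal{F}_i$. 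The key structural fact, which I would extract directly from $d\omega=0$ together with $\omega|_{T\mathcal{F}_i\times T\mathcal{F}_i}=0$ by expanding $d\omega(a,a',Z)$ via Cartan's formula, is that $\nabla^\flat$ is torsion free along the leaves: $\nabla^\flat_{a}a'-\nabla^\flat_{a'}a=[a,a']$ for $a,a'\in\Gamma(T\mathcal{F}_i)$.

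Next I would define $\nabla$ by the displayed formula relative to the smooth splitting and its projections and check it is a linear connection: $C^\infty(M)$-linearity in the first slot and the Leibniz rule in the second follow from the corresponding properties of $\nabla^\flat$, tensoriality of $\text{pr}_{\mathcal{F}_i}$, and the identity $[fU,V]=f[U,V]-(Vf)U$, the potentially offending derivative terms landing in the summand of the splitting that is annihilated by the relevant projection. That $\nabla$ parallelizes both foliations is immediate from the shape of the formula: if $Y\in\Gamma(T\mathcal{F}_1)$ its $\mathcal{F}_2$-component vanishes, so $\nabla_X Y$ has no $\mathcal{F}_2$-component, and symmetrically. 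Torsion freeness reduces, after expanding $[X,X']$ in the splitting and cancelling the mixed terms $\text{pr}_{\mathcal{F}_i}[\cdot,\cdot]$ in pairs, exactly to the leafwise torsion freeness of $\nabla^\flat$ just established. For $\nabla\omega=0$ it suffices, by bilinearity and the fact that $\omega$ pairs $T\mathcal{F}_1$ with $T\mathcal{F}_2$ and vanishes on each factor, to test $X\cdot\omega(a',b')=\omega(\nabla_X a',b')+\omega(a',\nabla_X b')$ for $a'\in\Gamma(T\mathcal{F}_1)$, $b'\in\Gamma(T\mathcal{F}_2)$; writing $X=a+b$ and substituting the formula, the defining identity of $\nabla^\flat$ supplies the $a\cdot\omega(a',b')$ and $b\cdot\omega(a',b')$ pieces while the two $\text{pr}[b,a']$ contributions and the two $[a,b']$ contributions cancel.

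Finally, for uniqueness let $\nabla'$ be any torsion free symplectic connection parallelizing $\mathcal{F}_1$ and $\mathcal{F}_2$. For $a,a'\in\Gamma(T\mathcal{F}_1)$, combining the symplectic property with torsion freeness ($\nabla'_a Z=\nabla'_Z a+[a,Z]$) and parallelization ($\nabla'_Z a\in\Gamma(T\mathcal{F}_1)$, hence $\omega(a',\nabla'_Z a)=0$) recovers the defining identity of $\nabla^\flat$, so $\nabla'_a a'=\nabla^\flat_a a'$; for $b\in\Gamma(T\mathcal{F}_2)$, $a'\in\Gamma(T\mathcal{F}_1)$, torsion freeness gives $\nabla'_b a'-\nabla'_{a'}b=[b,a']$ and parallelization forces $\nabla'_b a'\in\Gamma(T\mathcal{F}_1)$, $\nabla'_{a'}b\in\Gamma(T\mathcal{F}_2)$, so projecting onto $T\mathcal{F}_1$ yields $\nabla'_b a'=\text{pr}_{\mathcal{F}_1}[b,a']$. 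These two identities show $\nabla'_X a'$ equals the $\mathcal{F}_1$-component of the formula, and the symmetric argument handles sections of $T\mathcal{F}_2$; hence $\nabla'=\nabla$. I expect the only delicate points to be the bookkeeping in the torsion computation — keeping straight which bracket projections cancel — and the derivation of the leafwise torsion freeness of $\nabla^\flat$ from $d\omega=0$, which is where the symplectic (rather than merely bi-Lagrangian) hypothesis actually enters.
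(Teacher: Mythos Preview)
The paper does not prove this theorem: it is stated with attribution to Hess and cited from \cite{Hs}, and the text immediately continues with the definition of the Hess connection without any proof environment. So there is nothing in the paper to compare your argument against.

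That said, your proposal is a correct and self-contained proof. The well-definedness and leafwise character of $\nabla^\flat$, its $C^\infty(M)$-linearity in the first slot (using $\omega(Y,X)=0$), and the derivation of leafwise torsion freeness from $d\omega(a,a',Z)=0$ are all right. The verification that the assembled $\nabla$ is a genuine connection, parallelizes both foliations, is torsion free, and satisfies $\nabla\omega=0$ goes through as you outline; in the symplecticity check your phrase ``the two $\text{pr}[b,a']$ contributions and the two $[a,b']$ contributions cancel'' is slightly imprecise --- what actually happens is that $\omega(\text{pr}_{\mathcal{F}_1}[b,a'],b')=\omega([b,a'],b')$ cancels against the $\omega(b',[b,a'])$ term coming from $\omega(a',\nabla^\flat_b b')$, and the $\omega(a',[a,b'])$ terms from $\nabla^\flat_a a'$ and from $\text{pr}_{\mathcal{F}_2}[a,b']$ cancel --- but the conclusion is correct. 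Your uniqueness argument is clean and is exactly the right way to close.
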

The torsion free symplectic linear connection given in the previous theorem is called the \emph{Hess connection} or the \emph{canonical connection associated to a bi-Lagrangian manifold}. We will denote this linear connection by $\nabla^H$.
\begin{remark}
	If $(G,\omega^+)$ is a $2n$-dimensional symplectic Lie group and there exist two Lagrangian Lie subalgebras $\mathfrak{g}_1$ and $\mathfrak{g}_2$ of $(\mathfrak{g},\omega^+_\epsilon)$ such that $\mathfrak{g}=\mathfrak{g}_1\oplus \mathfrak{g}_2$, it is simple to show that there exists a symplectic basis $(e_1,\cdots,e_n,\widetilde{e_1},\cdots,\widetilde{e_n})$ of $(\mathfrak{g},\omega^+_\epsilon)$ such that $\mathfrak{g}_1=\text{Vect}_\mathbb{R}(e_1,\cdots,e_n)$ and  $\mathfrak{g}_2=\text{Vect}_\mathbb{R}(\widetilde{e_1},\cdots,\widetilde{e_n})$. Moreover, from Frobenius' Theorem we have that there exist two left invariant transversal Lagrangian foliations $\mathcal{G}_1$ and $\mathcal{G}_2$ of $(G,\omega^+)$ such that $T\mathcal{G}_1$ is generated by $(e_1^+,\cdots,e_n^+)$ and $T\mathcal{G}_2$ by $(\widetilde{e_1}^+,\cdots,\widetilde{e_n}^+)$. A direct computation allows us to prove that the Hess connection on $G$ associated to the bi-Lagrangian Lie group $(G,\omega^+,\mathcal{G}_1,\mathcal{G}_2)$ is left invariant.
\end{remark}
For our case, it is easy to see that $\mathfrak{g}$ and $\mathfrak{g}^\ast$ are contained in $(\mathfrak{g}\oplus \mathfrak{g}^\ast,\widetilde{\omega})$ as Lagrangian Lie subalgebras. If $\mathcal{G}_1$ and $\mathcal{G}_2$ are the left invariant transversal Lagrangian foliations of $(T^\ast \widetilde{G},\omega^+)$ determined by $\mathfrak{g}$ and $\mathfrak{g}^\ast$, respectively, then
\begin{proposition}\label{F19}
	There exists a unique left invariant flat affine symplectic connection $\nabla^H$ on $(T^\ast \widetilde{G},\omega^+)$ that parallelizes both foliations $\mathcal{G}_1$ and $\mathcal{G}_2$. The connection $\nabla^H$ is given by
	\begin{equation}
	\nabla^H_{(x+\alpha)^+}(y+\beta)^+=(xy+L_x^\ast(\beta))^+\qquad x,y\in\mathfrak{g},\quad \alpha,\beta \in\mathfrak{g}^\ast.
	\end{equation}
\end{proposition}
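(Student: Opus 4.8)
The plan is to recognize $(T^\ast\widetilde G,\omega^+,\mathcal G_1,\mathcal G_2)$ as a bi-Lagrangian Lie group, extract the Hess connection provided by Theorem \ref{HessConnection}, and then verify it coincides with the stated connection and is flat. First I would check that $\mathfrak g$ and $\mathfrak g^\ast$ sit inside $(\mathfrak g\oplus\mathfrak g^\ast,\widetilde\omega)$ as complementary Lagrangian Lie subalgebras: being Lagrangian is immediate from \eqref{Hess2}, and from \eqref{Hess1} one reads off that both are subalgebras (in fact $\mathfrak g^\ast$ is an abelian ideal). By the Remark preceding the statement these integrate to the left invariant transversal Lagrangian foliations $\mathcal G_1,\mathcal G_2$, so Theorem \ref{HessConnection} furnishes a unique torsion free symplectic connection $\nabla^H$ parallelizing both, and that same Remark tells us $\nabla^H$ is left invariant. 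Since a flat affine connection is in particular torsion free, the uniqueness clause of Hess's theorem will already yield the uniqueness asserted in the proposition, once flatness of $\nabla^H$ is established.

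Next I would compute $\nabla^H$ from the Hess formula, which by left invariance it suffices to do on left invariant fields. For $x,y\in\mathfrak g$ one has $\omega^+(\nabla^\flat_{x^+}y^+,Z)=-\omega^+(y^+,[x^+,Z])$, the term $x^+\cdot\omega^+(y^+,Z)$ vanishing because $\omega^+(y^+,Z)$ is a constant function when $Z$ is left invariant; substituting \eqref{Hess1} and \eqref{Hess2} and using that $\widetilde\omega$ is nondegenerate gives $\nabla^\flat_{x^+}y^+=(xy)^+$, and the same computation for $\alpha,\beta\in\mathfrak g^\ast$ gives $\nabla^\flat_{\alpha^+}\beta^+=0$. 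From \eqref{Hess1} one also gets $[\alpha^+,y^+]=(\alpha\circ L_y)^+$ and $[x^+,\beta^+]=(L^\ast_x\beta)^+$, both of which lie in $T\mathcal G_2$, so $\text{pr}_{\mathcal G_1}[\alpha^+,y^+]=0$ and $\text{pr}_{\mathcal G_2}[x^+,\beta^+]=(L^\ast_x\beta)^+$. Assembling the four terms in the Hess formula gives exactly $\nabla^H_{(x+\alpha)^+}(y+\beta)^+=(xy+L^\ast_x\beta)^+$, which visibly preserves $\mathcal G_1$ (take $\beta=0$) and $\mathcal G_2$ (take $y=0$).

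Finally I would show $\nabla^H$ is flat affine. As it is already torsion free and symplectic by construction, it is enough to prove that the associated product $(x+\alpha)\ast(y+\beta):=xy+L^\ast_x\beta$ on $\mathfrak g\oplus\mathfrak g^\ast$ is left symmetric; its commutator being the bracket \eqref{Hess1} is immediate from $xy-yx=[x,y]$. Writing $a=x+\alpha$, $b=y+\beta$, $c=z+\gamma$, a direct expansion gives
$$(a\ast b)\ast c-a\ast(b\ast c)=\bigl((xy)z-x(yz)\bigr)+\bigl(L^\ast_{xy}\gamma-L^\ast_xL^\ast_y\gamma\bigr),$$
an expression that does not depend on $\alpha$ or $\beta$. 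The first summand is symmetric under $x\leftrightarrow y$ because $\cdot$ is a left symmetric product on $\mathfrak g$, and the second because $L^\ast_{xy}-L^\ast_{yx}=L^\ast_{[x,y]}=[L^\ast_x,L^\ast_y]$, as $L$ and hence the dual representation $L^\ast$ are Lie algebra homomorphisms. Thus the associator of $\ast$ is symmetric in $a$ and $b$, so $\ast$ is left symmetric and $\nabla^H$ is flat; together with the previous paragraph and the uniqueness remark this proves the proposition. The step requiring the most care is the bookkeeping in the Hess formula — correctly matching $\mathcal F_1\leftrightarrow\mathfrak g$, $\mathcal F_2\leftrightarrow\mathfrak g^\ast$, the two projections, and the sign conventions for $L^\ast$ and $\widetilde\omega$ — rather than any conceptually difficult argument.
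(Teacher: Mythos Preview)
Your proof is correct and follows essentially the same route as the paper: invoke Hess's theorem for existence and uniqueness of the torsion free symplectic connection parallelizing the two Lagrangian foliations, compute it explicitly from the Hess formula, and then verify flatness by checking that the induced product on $\mathfrak g\oplus\mathfrak g^\ast$ is left symmetric. The only cosmetic difference is that the paper carries out the computation of $\nabla^H$ on basis vectors $e_i^+,e_j^\ast$ while you work directly with arbitrary $x,y,\alpha,\beta$, and you spell out the left-symmetric verification a bit more explicitly than the paper's one-line appeal to $L^\ast$ being a representation.
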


\begin{proof}
	It is clear that if $(e_1,\cdots,e_n)$ is a basis for $\mathfrak{g}$ and $(e_1^\ast,\cdots,e_n^\ast)$ is its dual basis, then $(e_1,\cdots,e_n,e_1^\ast,\cdots,e_n^\ast)$ is a symplectic basis for $(\mathfrak{g}\oplus \mathfrak{g}^\ast,\widetilde{\omega})$ such that 
	$T\mathcal{G}_1=\text{Vect}_\mathbb{K}\lbrace e_1^+,\cdots,e_n^+\rbrace$ and $T\mathcal{G}_2=\text{Vect}_\mathbb{K}\lbrace e_1^*,\cdots,e_n^*\rbrace$. Since $\mathfrak{g}^\ast$ is an abelian Lie algebra, then $e_i^*=(e_i^\ast)^+$ and recall that $e_i=(e_i^+)_\epsilon$ for all $i=1,\cdots,n$.
	As $\omega^+$ is left invariant, we have $\omega^+(e_i^++e_j^\ast,e_k^++e_l^\ast)=\delta_{jk}-\delta_{li}$. It is evident that $\nabla^H$ is the Hess connection, hence it satisfies being the unique torsion free symplectic connection that  parallelizes both foliations $\mathcal{G}_1$ and $\mathcal{G}_2$. If $\displaystyle \nabla^H_{e_i^+}e_j^+=\sum_{\lambda=1}^nc_{ij}^\lambda e_\lambda^+\in \Gamma(T\mathcal{G}_1)$ where $c_{ij}^\lambda$ must be constants, then from Theorem \ref{HessConnection} we get
	\begin{eqnarray*}
		-c_{ij}^k & = & \omega^+\left(\sum_{\lambda=1}^nc_{ij}^\lambda e_\lambda^+,e_k^*\right)\\
		& = & \omega^+(\nabla^H_{e_i^+}e_j^+,e_k^\ast)\\
		& = & e_i^+\cdot \omega^+(e_j^+,e_k^*)-\omega^+(e_j^+,[e_i^+,e_k^*])\\
		& = & -\omega^+(e_j^+,L_{e_i}^*(e_k^*))\\
		& = & -e_k^*((e_i\cdot e_j)^+).
	\end{eqnarray*} 
	Thus $\displaystyle \nabla^H_{e_i^+}e_j^+=\sum_{\lambda=1}^nc_{ij}^\lambda e_\lambda^+=\sum_{\lambda=1}^ne_\lambda^*((e_i\cdot e_j)^+)e_\lambda^+=(e_i\cdot e_j)^+$. As $\nabla^H_{e_i^*}e_j^*\in \Gamma(T\mathcal{G}_2)$, in a similar way as we did above we get that $\nabla^H_{e_i^*}e_j^*=0$. On the other hand, $\nabla^H_{e_i^+}e_j^*\in \Gamma (T\mathcal{G}_2)$. Thus $$\nabla^H_{e_i^+}e_j^*=pr_{\mathcal{G}_2}[e_i^+,e_j^\ast]=pr_{\mathcal{G}_2} (L_{e_i}^\ast(e_j^\ast))=L_{e_i}^\ast(e_j^\ast).$$
	Analogously, as $\nabla_{e_i^*}e_j^+\in \Gamma(T\mathcal{G}_1)$ then $\nabla^H_{e_i^\ast}e_j^+=pr_{\mathcal{G}_1}[e_i^\ast,e_j^+]=pr_{\mathcal{G}_1} (-L_{e_j}^\ast(e_i^\ast))=0$.\\
	Therefore, extending bilinearly, we have that the Hess connection $\nabla^H$ is given by
	$$\nabla^H_{(x+\alpha)^+}(y+\beta)^+=(xy+L_x^\ast(\beta))^+,\qquad x,y\in\mathfrak{g},\quad \alpha,\beta \in\mathfrak{g}^\ast.$$
	Finally, as $\nabla$ is a left invariant flat affine connection on $G$ and $L^\ast:\mathfrak{g}\to\mathfrak{gl}(\mathfrak{g}^\ast)$ is a linear representation, it follows that $(x+\alpha)(y+\beta)=xy+L_x^\ast(\beta)$ is a left symmetric product on $\mathfrak{g}\oplus \mathfrak{g}^\ast$ and hence $\nabla^H$ is also flat.
\end{proof}
Inspired in the construction due to A. Aubert and A. Medina on the twisted cotangent Lie group of a connected flat affine Lie group (in the pseudo-Riemannian case, see \cite{AuM}), we get a more general construction than Proposition \ref{F19}. 

Let $(G,\nabla,\omega^+)$ be a connected flat affine symplectic Lie group and $(\mathfrak{g}, \omega,\cdot)$ its flat affine symplectic Lie algebra.
\begin{Assumption}
	Assume that there exists a Lagrangian bilateral ideal $I$ of $(\mathfrak{g}, \omega, \cdot)$.
\end{Assumption}
By the formula \eqref{n12.1}, we have that
$$\omega(L_a(b),x)=-\omega(b,L_a(x))=0,\qquad a,b\in I,\quad x\in \mathfrak{g}.$$
As $\omega$ is nondegenerate, we get that $L_a(b)=0$ for all $a,b\in I$, this is, $I$ is a left symmetric algebra with null product. Therefore
$$0\longrightarrow I\hookrightarrow\mathfrak{g}\stackrel{\pi}{\hbox to 25pt{\rightarrowfill}}\mathfrak{g}/ I=B\longrightarrow0,$$
is an exact sequence of left symmetric algebras where $I$ has null product. If $s$ is a linear section of $\pi$, the vector space $\mathfrak{g}$ can be identified with $I\oplus s(B)$. In this case, the left symmetric product of $\mathfrak{g}$ can be written as
\begin{equation}\label{cotansym1}
(x+s(a))(y+s(b))=f(s(a),s(b))+xs(b)+s(a)y+s(a)\ast s(b),
\end{equation}
for all $x,y\in I$ and $a,b\in B$, where $f(s(a),s(b))$ and $s(a)\ast s(b)$ denote the components of product $s(a)s(b)$ over $I$ and $s(B)$, respectively. A direct computation shows that \eqref{cotansym1} is a left symmetric product if and only if $\ast$ is a left symmetric product over $S(B)$, the Lagrangian bilateral ideal $I$ has a structure of $s(B)$-bimodule, and $f:s(B)\times s(B)\to I$ is a 2-cocycle of the left symmetric algebra $s(B)$ with values in $I$. Moreover, the linear isomorphism $\left.\pi\right\vert_{s(B)}:s(B)\to B$ defined by $s(a)  \mapsto \pi(s(a))=a$, is also an isomorphism of left symmetric algebras. This follows from \eqref{cotansym1}, since this formula gives
$$\pi(s(a)\ast s(b))=\pi(s(a)s(b)-f(s(a),s(b)))=\pi(s(a))\pi(s(b))=ab,\quad a,b\in B.$$

In what follows we identify the left symmetric algebra $s(B)$ with $B$. Therefore, the left symmetric product in $\mathfrak{g}$ is defined by a structure of $B$-bimodulo over $I$
and a $2$-cocycle of left symmetric algebras $f: B\times B\longrightarrow I$. As $\omega$ is nondegenerate, the bilateral ideal $I$ is a Lagrangian subspace in $(\mathfrak{g},\omega)$, and $\mathfrak{g}=I\oplus B$, we have that the ideal $I$ can be identified with $B^*$ by means of the linear isomorphism $\varphi: I \to B^*$ defined by $x\mapsto\omega(x,\cdot)$.

Now, we determine the structure of $B$-bimodule on $I=B^\ast$ as follows. If $b,b'\in B$ and $\beta\in B^\ast$, as $I$ is Lagrangian we get that
$$\omega(b \cdot \beta,b')+\omega(\beta,b\cdot b')=0 \Leftrightarrow \omega(b \cdot \beta,b')=-\omega(\beta,bb') \Leftrightarrow b\cdot \beta=-^tL_b(\beta)=L^\ast_b(\beta),$$
where $L_b:B\to B$ is defined by $b'\mapsto bb'$ (left symmetric product in $B$). Thus, the left action of $B$ over $B^\ast$ is given by the dual representation $L^\ast$ associated to the linear representation $L:B\to \mathfrak{gl}(B)$ defined by $b\to L_b$. The fact that $L^\ast$ is a Lie algebra homomorphism is equivalent to say that the bilinear map $\cdot:B\times B^\ast\to B^\ast$ defined by $b\cdot \beta= L_b^*(\beta)$, satisfies the identity \eqref{bimo1}. On the other hand, the map $\Phi_{a,b}: B^\ast\to\mathbb{K}$ defined by $\beta\mapsto \omega(\beta\Box a,b)$, allows us to define a product $\circ$ over $B$ by means of the formula
\begin{equation}\label{Important2}
\omega(\beta\Box a,b)=\omega(\beta,a\circ b)\qquad a,b\in B,\quad\beta\in B^\ast.
\end{equation}
As $\omega$ is nondegenerate and $I$ is Lagrangian, we have that both bilinear maps $\Box:B^\ast\times B\to B^\ast$ (defined by $(\beta,b)\to \beta\Box b$) and $\cdot:B\times B^\ast\to B^\ast$ satisfy the identity \eqref{bimo2} if and only if
\begin{eqnarray*}
	&   &\omega(a\cdot(\beta\Box b)-(a\cdot \beta)\Box b,c)=\omega(\beta \Box (ab)-(\beta\Box a)\Box b,c)\\
	& \Leftrightarrow &  -\omega(\beta\Box b,ac)-\omega(a\cdot \beta,b\circ c)=\omega(\beta,(ab)\circ c)-\omega(\beta\Box a,b\circ c)\\
	& \Leftrightarrow & -\omega(\beta,b\circ (ac)-a(b\circ c))=\omega(\beta,(ab)\circ c-a\circ (b\circ c)),
\end{eqnarray*}
that is, the left symmetric product on $B$ and the product $\circ$ must satisfy the relation
\begin{equation}\label{Importan1}
a\circ (b\circ c)+a(b\circ c)=(ab)\circ c+b\circ (ac),\qquad a,b,c\in B.
\end{equation}

Now, we claim that the following map $L':B \to \mathfrak{gl}(B)$ defined by $a\mapsto L'_a$ where $L'_a(b):=a\circ b$, is a Lie algebra homomorphism. To prove this, notice that by the identity \eqref{Important2}, we have that $\beta\Box a=\ ^tL'_a(\beta)$ for all $a\in B$ and $\beta\in B^*$. Therefore, the structure of $B$-bimodule over $I=B^\ast$ is given by 
\begin{equation}\label{Important0}
a\cdot\beta =L^*_a(\beta)\qquad\text{and}\qquad\alpha\Box b=\ ^tL'_b(\alpha).
\end{equation}

Hence, the left symmetric product over $\mathfrak{g}$ given in the formula \eqref{cotansym1} is expressed as
\begin{equation}\label{Important3}
(\alpha+a)(\beta+b)=(\alpha\Box b+a\cdot\beta +f(a,b))+ab=(^tL'_b(\alpha)+L^\ast_a(\beta) +f(a,b))+ab.
\end{equation}

The bracket over $\mathfrak{g}$ given by the commutator of product \eqref{Important3} is as follows
\begin{eqnarray*}
	[(\alpha+a),(\beta+b)] & = & (\alpha+a)(\beta+b)-(\beta+b)(\alpha+a)\\
	& = &  (L^*_a(\beta) -\ ^tL'_a(\beta) -L^*_b(\alpha)+\ ^tL'_b(\alpha)+f(a,b)-f(b,a))\\
	& +& ab-ba\\
	& = & (\theta_a(\beta)-\theta_b(\alpha)+\hat{f}(a,b))+[a,b]_B,
\end{eqnarray*}
where $\theta=L^*-\ ^tL'$ and $\hat{f}:B\times B\to I$ is defined by $\hat{f}(a,b)=f(a,b)-f(b,a)$ for all $a,b\in B$. As $f:B\times B\to I$ is a 2-cocycle of left symmetric algebra $B$ with values in $I$, it is easy to check that $\hat{f}$ is a 2-cocycle of the underlying Lie algebra $B^-$\footnote{The Lie algebra structure over $B^-$ is given by the commutator of the left symmetric product on $B$.} with values in $I$. That the previous bracket is indeed a Lie bracket is equivalent to have that $\theta:B^-\to \mathfrak{gl}(B^*)$ defined by $a\mapsto \theta_a=L^*_a-\ ^tL'_a$, is a Lie algebra homomorphism. As
$$\omega(\theta_a(\beta),c)=\omega(a\cdot\beta-\beta\Box a,c)=\omega(a\cdot\beta,c)-\omega(\beta\Box a,c)=-\omega(\beta,a\circ c+ac)\quad c\in B,$$
the dual representation associated to $\theta$ is given by
\begin{align*} 
\theta^*:B &\longrightarrow \mathfrak{gl}((B^*)^*)\cong \mathfrak{gl}(B)\\
a   &\longmapsto \theta^*_a=-\ ^t\theta_a=-\ ^t(L^*_a-\ ^tL'_a)=L_a+L'_a.
\end{align*}
that is, $\theta^*=L+L'$ is a linear representation of $B$ by $B$. Therefore, $L'$ is also a linear representation of $B$ by $B$, since this is the difference between the linear representations $\theta^*$ and $L$.

Finally, as $I$ is Lagrangian, the left symmetric product on $\mathfrak{g}$ given in the formula \eqref{Important3} is symplectic with respect to $\omega$ if and only if
\begin{eqnarray*}
	&		  &\omega((\alpha+a)(\beta+b),\gamma+c)+\omega(\beta+b,(\alpha+a)(\gamma+c))=0\\
	& \Leftrightarrow &  \omega((^tL'_b(\alpha)-\ ^tL_a(\beta) +f(a,b))+ab,\gamma+c)\\
	& + &\omega(\beta+b,(^tL'_c(\alpha)-\ ^tL_a(\gamma) +f(a,c))+ac)=0\\
	& \Leftrightarrow & \omega(^tL'_b(\alpha)-\ ^tL_a(\beta) +f(a,b),\gamma)+\omega(^tL'_b(\alpha)-\ ^tL_a(\beta) +f(a,b),c)\\
	& + &\omega(ab,\gamma)+\omega(ab,c)+\omega(\beta,^tL'_c(\alpha)-\ ^tL_a(\gamma) +f(a,c))+\omega(\beta,ac)\\
	& + &\omega(b,^tL'_c(\alpha)-\ ^tL_a(\gamma) +f(a,c))+\omega(b,ac)=0\\
	& \Leftrightarrow & (^tL'_b(\alpha)-\ ^tL_a(\beta) +f(a,b))(c)-\gamma(ab)+\beta(ac)\\
	& - &(^tL'_c(\alpha)-\ ^tL_a(\gamma) +f(a,c))(b)+\omega(ab,c)+\omega(b,ac)=0\\
	& \Leftrightarrow & \alpha(b\circ c)-\beta(ac)+f(a,b)(c)-\gamma(ab)+\beta(ac)-\alpha(c\circ b)\\
	& + &\gamma(ab)-f(a,c)(b)=0\\
	& \Leftrightarrow & \alpha(b\circ c-c\circ b)+f(a,b)(c)-f(a,c)(b)=0.
\end{eqnarray*}
for all $a,b,c\in B$ and $\alpha,\beta,\gamma\in B^*$. This is equivalent to having
\begin{equation}\label{Important4}
\alpha(b\circ c-c\circ b)=0\qquad\text{and}\quad f(a,b)(c)-f(a,c)(b)=0,
\end{equation}
for all $a,b,c\in B$ and $\alpha \in B^*$. Therefore, the left symmetric product \eqref{Important3} is symplectic with respect to $\omega$ if and only if the product $\circ$ is commutative and $f$ is a 2-cocycle of the left symmetric algebra $B$ with values in $I$ verifying the formula \eqref{Important4}.

The previous construction shows a short and different approach for obtaining the next result whose reciprocal was proved by X. Ni and C. Bai in \cite{NB}.

\begin{proposition}\label{F20}
	Let  $(\mathfrak{g},\omega,\cdot)$ be a flat affine symplectic Lie algebra which admits a Lagrangian bilateral ideal $I$. Then the canonical exact sequence of left symmetric algebras 
	$$0\longrightarrow I\hookrightarrow\mathfrak{g}\stackrel{\pi}{\hbox to 25pt{\rightarrowfill}}\mathfrak{g}/ I=B\longrightarrow0, $$
	determines on the Lie algebra $B=\mathfrak{g}/ I$ a commutative product $\circ$ that verifies \eqref{Importan1} and endows $B^*$ with a structure of $B$-bimodule of left symmetric algebra given by \eqref{Important0}. Furthermore, the left symmetric algebra $\mathfrak{g}$ is the extension of $B$ by $B^\ast$ according with the 2-cocycle of left symmetric algebra $f\in Z_{SG}^2(B,B^*)$ which satisfies \eqref{Important4}.
	
	Reciprocally, let $(B,\cdot)$ be a left symmetric algebra equipped with a commutative product $\circ$ that satisfies the formula \eqref{Importan1}. Then $B^\ast$ has a structure of $B$-bimodule of left symmetric algebra given by \eqref{Important0}. Moreover, if $f\in Z_{SG}^2(B,B^*)$ is a 2-cocycle of left symmetric algebra that satisfies  \eqref{Important4}, then the vector space $\mathfrak{g}=B^\ast\oplus B$ endowed with the left symmetric product \eqref{Important3} and the symplectic structure given by
	$$\widetilde{\omega}(\alpha+a,\beta+b)=\alpha(b)-\beta(a)\quad a,b\in B,\quad \alpha,\beta\in B^*,$$
	is a flat affine symplectic Lie algebra.
\end{proposition}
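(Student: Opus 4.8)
The plan is to recognize that both implications have, in essence, already been carried out in the construction preceding the statement, so the proof mostly consists in assembling those computations in the right order and pointing out the few facts that still need checking.

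For the direct implication I would argue as follows. Starting from $(\mathfrak{g},\omega,\cdot)$ with a Lagrangian bilateral ideal $I$, the identity \eqref{n12.1} together with nondegeneracy of $\omega$ forces the product of $I$ to be null, so a linear section $s$ of $\pi$ identifies $\mathfrak{g}$ with $I\oplus s(B)$ and the product takes the shape \eqref{cotansym1}. By the cohomology of left symmetric algebras (\cite{N}) this is exactly the datum of a left symmetric product $\ast$ on $s(B)$, a structure of $s(B)$-bimodule on $I$, and a $2$-cocycle $f$; moreover $\pi|_{s(B)}$ identifies $s(B)$ with $B$ as left symmetric algebras. Using that $I$ is Lagrangian and $\omega$ nondegenerate, $\varphi\colon I\to B^*$, $x\mapsto\omega(x,\cdot)$, is an isomorphism, and writing the Lagrangian compatibility of the two actions with $\omega$ shows that the left action is $L^*$ and the right action has the form ${}^tL'$ for the product $\circ$ defined by \eqref{Important2}; the bimodule axiom \eqref{bimo2} then becomes \eqref{Importan1}, and $L'$, given by $L'_a(b)=a\circ b$, turns out to be a linear representation of $B$. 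Hence the product of $\mathfrak{g}$ is \eqref{Important3}, and the last chain of equivalences of the construction shows that $\nabla\omega=0$ is equivalent to the two relations gathered in \eqref{Important4}, the first of which says precisely that $\circ$ is commutative. This establishes the direct statement.

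For the converse, suppose $(B,\cdot)$ is left symmetric, $\circ$ is commutative and satisfies \eqref{Importan1}, and $f\in Z_{SG}^2(B,B^*)$ satisfies \eqref{Important4}. Since $B$ is left symmetric, $L\colon B\to\mathfrak{gl}(B)$, $L_a(b)=ab$, is a Lie algebra homomorphism, so \eqref{bimo1} holds for $a\cdot\beta=L^*_a(\beta)$; and the chain of equivalences of the construction shows that, with $\alpha\Box b:={}^tL'_b(\alpha)$ and $L'_b(c):=b\circ c$, axiom \eqref{bimo2} is equivalent to \eqref{Importan1}. Thus \eqref{Important0} makes $B^*$ a $B$-bimodule over the left symmetric algebra $B$, and by the extension theory of left symmetric algebras (\cite{N}) the space $\mathfrak{g}=B^*\oplus B$ with product \eqref{Important3} is a left symmetric algebra; in particular its commutator is a Lie bracket. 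The form $\widetilde{\omega}(\alpha+a,\beta+b)=\alpha(b)-\beta(a)$ is obviously skew-symmetric and nondegenerate, and running the final chain of equivalences of the construction backwards, \eqref{Important4} is precisely the condition that \eqref{Important3} be symplectic with respect to $\widetilde{\omega}$; this, exactly as in the proof of Theorem \ref{CharacterizationLeft} (cf. \eqref{2cocycletilde}), implies that $\widetilde{\omega}$ is a scalar $2$-cocycle. Therefore $(\mathfrak{g},\widetilde{\omega},\cdot)$ is a flat affine symplectic Lie algebra, as claimed.

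The step I expect to be the real (already surmounted) obstacle is the identification of the right $B$-action on $I\cong B^*$: one must realize that the Lagrangian condition together with the bimodule axioms forces this action to be governed by an auxiliary \emph{commutative} product $\circ$ on $B$, that $\circ$ is not free but is pinned down by \eqref{Important2}, and that it must satisfy the mixed compatibility \eqref{Importan1} with the left symmetric product. Once this is understood, both directions collapse to the standard correspondence between abelian extensions of left symmetric algebras and their $2$-cocycles, supplemented by the purely computational symplectic compatibility \eqref{Important4}.
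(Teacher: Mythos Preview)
Your proposal is correct and follows essentially the same approach as the paper: the paper does not give a separate proof environment for this proposition, but rather states that ``the previous construction shows a short and different way of obtaining the next result'' and cites \cite{NB} for the reciprocal, so the long computation preceding the statement \emph{is} the proof of the direct implication, exactly as you recognized. Your treatment of the converse---verifying the bimodule axioms from \eqref{Importan1}, invoking Nijenhuis' extension theory for the left symmetric structure, and reading \eqref{Important4} backwards to get the symplectic compatibility (hence the $2$-cocycle property of $\widetilde{\omega}$)---is slightly more explicit than what the paper records, but it is the natural reversal of the same construction.
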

To set up terminology we give the following definition:
\begin{Definition}
	The flat affine symplectic Lie algebra of Proposition \ref{F20} is called	\emph{twisted cotangent flat affine symplectic Lie algebra} associated to the left symmetric algebra $B$ according with the commutative product $\circ$ over $B$ and the 2-cocycle of left symmetric algebra $f\in Z_{SG}^2(B,B^\ast)$.
\end{Definition}
It is easy to see that $B^\ast$ is contained on twisted cotangent flat affine symplectic Lie algebra $(B\oplus B^\ast,\widetilde{\omega})$ as an abelian Lagrangian Lie subalgebra.  However, $B$ is contained as Lagrangian Lie subalgebra in $(B\oplus B^\ast,\widetilde{\omega})$ if and only if $f\in Z_{SG}^2(B,B^\ast)$ is symmetric.
\begin{Corollary}\label{F21}
	If $(G,\nabla)$ is a connected flat affine Lie group, then the Hess connection on the classic cotangent symplectic Lie group $(T^\ast \widetilde{G},\omega^+)$ given in the Proposition \ref{F19} is obtained by the previous construction with $f=0$ and null product $\circ$.
\end{Corollary}
\begin{proof}
	If $f=0$ and $\circ$ is the null product on $\mathfrak{g}$, then the left invariant flat affine symplectic connection $\nabla$ over $(T^\ast \widetilde{G},\omega^+)$ determined by \eqref{Important3} is given by
	$$\nabla_{(x+\alpha)^+}(y+\beta)^+=(xy+L_x^\ast(\beta))^+\quad x,y\in\mathfrak{g},\quad \alpha,\beta \in\mathfrak{g}^\ast.$$
	This is precisely the Hess connection of Proposition \ref{F19}, that is, $\nabla=\nabla^H$.
\end{proof}
\begin{remark}
	Suppose that $(G,\nabla)$ is a simply connected flat affine Lie group. In this case the product manifold $T^\ast \widetilde{G}=\widetilde{G}\times \mathfrak{g}^\ast$ is a vector bundle which is isomorphic to the cotangent bundle $T^\ast G$ of $G$. When $f=0$ and the product $\circ$ over $\mathfrak{g}$ is null, $T^\ast G$ is the classic cotangent Lie group of $(G,\nabla)$. Otherwise, when $f$ and $\circ$ are not both null, the simply connected Lie group $H$ with Lie algebra isomorphic to the vector space $\mathfrak{g}\oplus \mathfrak{g}^\ast$ and Lie bracket determined by the commutator of product \eqref{Important3}, that is
	$$[x+\alpha,y+\beta]=[x,y]+\theta_x(\beta)-\theta_y(\beta)+\hat{f}(x,y)$$
	where $\theta=L^*-\ ^tL'$ and $\hat{f}:\mathfrak{g}\times \mathfrak{g}\to \mathfrak{g}^\ast$ is defined by $\hat{f}(x,y)=f(x,y)-f(y,x)$ for all $x,y\in \mathfrak{g}$, will be called the \emph{twisted cotangent symplectic Lie group} of $(G,\nabla)$ according with the commutative product $\circ$ and $f\in Z_{SG}^2(\mathfrak{g},\mathfrak{g}^*)$. By Proposition \ref{F20} and Theorem \ref{CharacterizationLeft} we have that $H$  has a structure of flat affine symplectic Lie group induced by $\widetilde{\omega}$ and the left symmetric product \eqref{Important3}.
\end{remark}
As an immediate consequence of Proposition \ref{F20} we obtain
\begin{theorem}\label{F22}
	Let $G$ be a simply connected Lie group. Then there exists a structure of flat affine symplectic Lie group on $G$ that admits a normal abelian Lagrangian Lie subgroup if and only if, $G$ is isomorphic to the twisted cotangent symplectic Lie group of a connected flat affine Lie group.
\end{theorem}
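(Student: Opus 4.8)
The plan is to prove both implications by passing to Lie algebras and invoking Proposition \ref{F20}. For the easy direction, assume $G$ is isomorphic to the twisted cotangent symplectic Lie group $H$ of a connected flat affine Lie group $(G_{0},\nabla_{0})$, and write $B=\mathfrak{g}_{0}$ for the corresponding left symmetric algebra. By Proposition \ref{F20} and the remark following Corollary \ref{F21}, $H$ carries a flat affine symplectic Lie group structure whose Lie algebra is $B\oplus B^{*}$ with the symplectic form $\widetilde{\omega}$ and the Lie bracket coming from the commutator of the product \eqref{Important3}. From that bracket one reads off $[\mathfrak{g},B^{*}]\subseteq B^{*}$ and $[B^{*},B^{*}]=0$, so $B^{*}$ is an abelian ideal; it is $\widetilde{\omega}$-isotropic of half the dimension, hence a Lagrangian abelian ideal. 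Since $H$, and therefore $G$, is simply connected, the connected subgroup integrating $B^{*}$ is a closed normal abelian Lagrangian Lie subgroup, which gives the required structure on $G$.

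For the converse, let $(G,\omega^{+},\nabla)$ be a flat affine symplectic Lie group structure on the simply connected group $G$, and let $N\trianglelefteq G$ be a normal abelian Lagrangian Lie subgroup with Lie algebra $\mathfrak{n}\subseteq\mathfrak{g}$. Then $(\mathfrak{g},\omega,\cdot)$ is the associated flat affine symplectic Lie algebra and $\mathfrak{n}$ is a Lagrangian Lie ideal of $\mathfrak{g}$ (the scalar $2$-cocycle identity for $\omega$ in fact forces any Lagrangian ideal to be abelian, so abelianity is automatic). The crucial step is to establish the following.

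\emph{Key Lemma.} $\mathfrak{n}$ is a bilateral ideal of the left symmetric algebra $(\mathfrak{g},\cdot)$.

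To prove it I would first record, exactly as in Lemma \ref{F13}, that because each $L_{x}\in\mathfrak{sp}(\mathfrak{g},\omega)$, because $\mathfrak{n}$ is a Lie ideal, and because $\mathfrak{n}$ is Lagrangian, one has $\omega(x\cdot a,b)=\omega(a\cdot b,x)$ for all $x\in\mathfrak{g}$ and $a,b\in\mathfrak{n}$; in particular $(a,b)\mapsto\omega(x\cdot a,b)$ is symmetric on $\mathfrak{n}$, and — using $a\cdot x=x\cdot a-[x,a]$ — the statement ``$\mathfrak{n}$ is a bilateral ideal'' is equivalent to $\mathfrak{n}\cdot\mathfrak{n}=0$. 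The hard part will be the vanishing $\mathfrak{n}\cdot\mathfrak{n}=0$: the idea is to feed the left symmetric identity restricted to $\mathfrak{n}$ (which, $\mathfrak{n}$ being abelian, yields $L_{a}L_{b}=L_{b}L_{a}$ on all of $\mathfrak{g}$ for $a,b\in\mathfrak{n}$) into the symmetries above, so that the quadrilinear form $(a,b,c,d)\mapsto\omega(a\cdot b,c\cdot d)$ on $\mathfrak{n}$ becomes totally symmetric, hence, being simultaneously alternating under interchange of the two pairs, identically zero; this makes $\mathfrak{n}\cdot\mathfrak{n}$ isotropic and gives $(\mathfrak{n}\cdot\mathfrak{n})\cdot\mathfrak{n}\subseteq\mathfrak{n}$ and $\mathfrak{n}\cdot(\mathfrak{n}\cdot\mathfrak{n})\subseteq\mathfrak{n}$, after which one reinserts these into the left symmetric identity with one entry ranging over $\mathfrak{g}$ and uses nondegeneracy of $\omega$ to conclude $\mathfrak{n}\cdot\mathfrak{n}=0$. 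This argument mirrors the corresponding lemma of Aubert--Medina in \cite{AuM}, and it is the step where the real work lies.

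With the Key Lemma available, $\mathfrak{n}$ is a Lagrangian bilateral ideal of $(\mathfrak{g},\omega,\cdot)$, so Proposition \ref{F20} applies: $\mathfrak{g}$ is the twisted cotangent flat affine symplectic Lie algebra of $B:=\mathfrak{g}/\mathfrak{n}$ for a suitable commutative product $\circ$ on $B$ and a cocycle $f\in Z_{SG}^{2}(B,B^{*})$ satisfying \eqref{Important4}; in particular, as a Lie algebra, $\mathfrak{g}\cong B\oplus B^{*}$ with the bracket of Proposition \ref{F20}. Letting $G_{0}$ be the simply connected Lie group with Lie algebra $B$ and $\nabla_{0}$ the left invariant flat affine connection determined by the left symmetric product of $B$, the pair $(G_{0},\nabla_{0})$ is a connected flat affine Lie group, and since $G$ is simply connected with Lie algebra $B\oplus B^{*}$, it is by definition isomorphic to the twisted cotangent symplectic Lie group of $(G_{0},\nabla_{0})$ relative to $\circ$ and $f$, which completes the proof.
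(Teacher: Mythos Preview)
The paper offers no argument beyond the sentence ``As an immediate consequence of Proposition~\ref{F20} we obtain [Theorem~\ref{F22}]''; it treats the theorem as nothing more than the group-level restatement of Proposition~\ref{F20}. Your forward direction (twisted cotangent $\Rightarrow$ existence of a normal abelian Lagrangian subgroup) is exactly what the paper has in mind, and your write-up is correct.

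Where you go further than the paper is in the converse. You rightly observe that Proposition~\ref{F20} requires a Lagrangian \emph{bilateral} ideal of the left symmetric algebra $(\mathfrak{g},\cdot)$, whereas the hypothesis of Theorem~\ref{F22} only hands you a Lagrangian abelian \emph{Lie} ideal $\mathfrak{n}$. The paper does not address this discrepancy at all, so your Key Lemma is a genuine addition rather than a different route to the same endpoint.

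That said, your sketch of the Key Lemma is where the real uncertainty lies. The reductions you give are correct: the identity $\omega(x\cdot a,b)=\omega(a\cdot b,x)$ holds for $x\in\mathfrak{g}$, $a,b\in\mathfrak{n}$; from it, ``$\mathfrak{n}$ bilateral'' is equivalent to $\mathfrak{n}\cdot\mathfrak{n}=0$; and your symmetry argument for the quadrilinear form $Q(a,b,c,d)=\omega(a\cdot b,c\cdot d)$ does force $Q\equiv 0$, hence $(\mathfrak{n}\cdot\mathfrak{n})\cdot\mathfrak{n}\subseteq\mathfrak{n}$ and $\mathfrak{n}\cdot(\mathfrak{n}\cdot\mathfrak{n})\subseteq\mathfrak{n}$. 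But the last move --- ``reinsert these into the left symmetric identity with one entry ranging over $\mathfrak{g}$ and use nondegeneracy of $\omega$ to conclude $\mathfrak{n}\cdot\mathfrak{n}=0$'' --- is not an argument, and I do not see how to complete it along the lines you indicate. Note, for instance, that the cubic form $P(a,b,c)=\omega(a\cdot b,c)$ on $\mathfrak{n}$ is totally symmetric, and $P\equiv 0$ is equivalent only to $\mathfrak{n}\cdot\mathfrak{n}\subseteq\mathfrak{n}$, which is strictly weaker than $\mathfrak{n}\cdot\mathfrak{n}=0$; the symmetry considerations alone do not kill $P$. The analogy with Aubert--Medina is suggestive but not a proof, since the sign structure in the symplectic case differs from the pseudo-Riemannian one. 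You should either supply a complete argument for $\mathfrak{n}\cdot\mathfrak{n}=0$, or note explicitly that the paper appears to read ``normal abelian Lagrangian Lie subgroup'' as shorthand for ``subgroup whose Lie algebra is a Lagrangian bilateral ideal of $(\mathfrak{g},\cdot)$'' (in which case abelianness is automatic and your Key Lemma is unnecessary).
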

Assume that $(G,\nabla)$ is a connected flat affine Lie group and $H$ the twisted cotangent symplectic Lie group of $(G,\nabla)$ according with a commutative product $\circ$ and a 2-cocycle  $f\in Z_{SG}^2(\mathfrak{g},\mathfrak{g}^\ast)$ where $ab=(\nabla_{a^+}b^+)(\epsilon)$ for all $a,b\in \mathfrak{g}$. To end the section, we study the completeness of the left invariant flat affine symplectic connection on $(H,\omega^+)$ induced by the left symmetric product \eqref{Important3}. Here $\omega^+$ denotes the left invariant symplectic form on $H$ determined by $\widetilde{\omega}$. Helmstetter showed in \cite{H} that a left invariant flat affine connection $\nabla$ on $G$ is geodesically complete if and only if the ``right multiplication" $R_a:\mathfrak{g}\to\mathfrak{g}$ defined by $R_a(b)=(\nabla_{b^+}a^+)(\epsilon)$, is nilpotent for all $a\in\mathfrak{g}$. Therefore, it is enough to see under what conditions the right product $\widetilde{R}_{a+\alpha}$ is nilpotent for all $a\in\mathfrak{g}$ and $\beta\in\mathfrak{g}^\ast$. The right multiplications on $\mathfrak{g}\times \mathfrak{g}^\ast$ induced by \eqref{Important3} are given by
\begin{eqnarray*}
	&  &\widetilde{R}_a(b)=f(b,a)+ba,\\
	&  &  \widetilde{R}_a(\beta)=\ ^tL'_a(\beta),\\
	&  & \widetilde{R}_\beta(a)=L^*_a(\beta),\quad \text{and}\\
	&  & \widetilde{R}_\alpha(\beta)=0,\quad a,b\in ,\mathfrak{g},\quad \alpha,\beta\in \mathfrak{g}^\ast,
\end{eqnarray*}
where $L'_a:\mathfrak{g}\to\mathfrak{g}$ is defined by $L_a(b)=a\circ b$. Hence, as $\widetilde{R}_\alpha(\beta)=0$ we obtain that $\widetilde{R}_\alpha^2=0$ since $\widetilde{R}_\alpha^2(a)=\widetilde{R}_\alpha(L^*_a(\beta))=0$. On the other hand, if we put $R_a(b)=ab$, then
\begin{eqnarray*}
	&  &\widetilde{R}_a^k(b)=\sum_{j=1}^{k}\ ^t((L'_a)^{j-1})(f(R_a^{k-j}(b),a))+R_a^k(b),\quad\text{and}\\
	&  &  \widetilde{R}_a^k(\beta)=\ ^t(L'_a)^{k}(\beta),\quad k\in \mathbb{N},\quad a,b\in B,\quad \beta\in B^\ast.
\end{eqnarray*}

Thus, we have the following result.
\begin{proposition}\label{F23}
	Let $(G,\nabla)$ be a connected flat affine Lie group. The right product $\widetilde{R}_{a+\alpha}$ associated to the left symmetric product on $\mathfrak{g}\times \mathfrak{g}^\ast$ given by \eqref{Important3} according with the commutative product $\circ$ and the 2-cocycle $f\in Z_{SG}^2(\mathfrak{g},\mathfrak{g}^\ast)$ is nilpotent for all $a\in\mathfrak{g}$ and $\alpha\in \mathfrak{g}^\ast$ if and only if the linear maps $R_a$ and $L'_a$ are nilpotent for all $a\in\mathfrak{g}$. In particular, the Hess connection $\nabla^H$ given in Proposition \ref{F19} is geodesically complete if and only if $\nabla$ is geodesically complete.
\end{proposition}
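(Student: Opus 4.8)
The plan is to put each right multiplication $\widetilde{R}_{a+\alpha}$ into block triangular form with respect to the splitting $\mathfrak{g}^\ast\oplus\mathfrak{g}$ and then invoke only the elementary theory of nilpotent operators. From the left symmetric product \eqref{Important3} (or directly from the displayed formulas for $\widetilde{R}_a$, $\widetilde{R}_\beta$ and $\widetilde{R}_\alpha$), one reads off that $\mathfrak{g}^\ast$ is invariant under every $\widetilde{R}_{a+\alpha}$ and that, in the ordered decomposition $\mathfrak{g}^\ast\oplus\mathfrak{g}$,
\[
\widetilde{R}_{a+\alpha}=\begin{pmatrix}{}^{t}L'_{a} & \ast\\ 0 & R_{a}\end{pmatrix},
\]
where the upper right block sends $b\in\mathfrak{g}$ to $L^{\ast}_{b}(\alpha)+f(b,a)\in\mathfrak{g}^\ast$, $R_a$ is the right multiplication of the left symmetric algebra $(\mathfrak{g},\cdot)$ associated to $\nabla$, and ${}^{t}L'_a$ is the transpose of $L'_a$ on $\mathfrak{g}^\ast$. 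The formulas for $\widetilde{R}^k_a$ recalled in the excerpt are precisely the entries of the $k$-th power of this block matrix, so they serve as a consistency check (or as the starting point if one prefers a direct computation).

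Next I would establish the equivalence. Recall two standard facts: if $A^{p}=0$ and $C^{q}=0$, then any block upper triangular operator with diagonal blocks $A$ and $C$ is nilpotent, its $(p+q)$-th power vanishing; and, conversely, the restriction of a nilpotent operator to an invariant subspace, together with the operator it induces on the quotient, are nilpotent. For the implication ``$R_a,L'_a$ nilpotent $\Rightarrow$ $\widetilde{R}_{a+\alpha}$ nilpotent'': since $({}^{t}L'_a)^{k}={}^{t}((L'_a)^{k})$, nilpotency of $L'_a$ gives nilpotency of ${}^{t}L'_a$, so $\widetilde{R}_{a+\alpha}$ is block upper triangular with nilpotent diagonal blocks, hence nilpotent, for all $a\in\mathfrak{g}$ and $\alpha\in\mathfrak{g}^\ast$. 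For the converse, specialize to $\alpha=0$: nilpotency of $\widetilde{R}_a$ forces nilpotency of its restriction ${}^{t}L'_a$ to the invariant subspace $\mathfrak{g}^\ast$ and of the induced endomorphism $R_a$ of $(\mathfrak{g}^\ast\oplus\mathfrak{g})/\mathfrak{g}^\ast\cong\mathfrak{g}$, whence $L'_a$ and $R_a$ are nilpotent for every $a$.

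For the last assertion I would use Corollary \ref{F21}: the Hess connection $\nabla^{H}$ of Proposition \ref{F19} is the case of the construction with $f=0$ and null commutative product $\circ$, so that $L'_a=0$ for every $a$ and hence is trivially nilpotent. Applying the equivalence just proved to this instance, the right multiplications attached to $\nabla^{H}$ are all nilpotent if and only if $R_a$ is nilpotent for every $a\in\mathfrak{g}$. By the criterion of Helmstetter recalled immediately before the statement, the latter holds if and only if $\nabla$ is geodesically complete, while the former holds if and only if $\nabla^{H}$ is geodesically complete; combining these gives that $\nabla^{H}$ is geodesically complete exactly when $\nabla$ is.

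I do not anticipate a real obstacle here: the whole argument reduces to the triangular shape of $\widetilde{R}_{a+\alpha}$ and the standard stability of nilpotency under restriction to an invariant subspace and passage to the quotient. The only points requiring attention are the bookkeeping of which block acts on which summand of $\mathfrak{g}^\ast\oplus\mathfrak{g}$ and verifying that the power formulas quoted in the excerpt agree with the block-matrix computation — both routine.
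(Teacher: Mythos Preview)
Your proof is correct and follows essentially the same route as the paper: both exploit that $\mathfrak{g}^\ast$ is $\widetilde{R}_{a+\alpha}$-invariant, with the restriction to $\mathfrak{g}^\ast$ being ${}^{t}L'_a$ and the induced map on the quotient being $R_a$. The paper expresses this via the explicit power formulas for $\widetilde{R}_a^k(b)$ and $\widetilde{R}_a^k(\beta)$ (together with $\widetilde{R}_\alpha^2=0$), whereas you package the same observation as a block upper triangular matrix and invoke the standard nilpotency criterion---a cosmetic difference only, and your formulation has the minor advantage of handling $\widetilde{R}_{a+\alpha}$ for general $\alpha$ in one stroke.
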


\section*{Acknowledgments}
I wish to express my sincere gratitude to Omar Saldarriaga and Elizabeth Gasparim for their collaboration and valuable comments. I would also like to thank Alberto Medina for his accompaniment in the Differential Geometry seminar of the Universidad de Antioquia. His paper \cite{AuM} with Anne Aubert was a source of inspiration for the present work. Finally, I want to thank my great friends Sebasti\'an Herrera, Andr\'es Franco, Juli\'an Gonz\'alez, and Adriana Fonce for their company and all the academic discussions that we have had in recent years.

I am grateful for the support given by the Universidad de Antioquia and the Network NT8 from the Office External Activities of Adbus Salam International Center for Theoretical Physics, between years 2017 and 2018.

\end{document}